\DeclareMathAlphabet{\mathpzc}{OT1}{pzc}{m}{it}
\newcommand{\slope}{\mathfrak{s}}
\newcommand{\Tg}{\mathcal{T}^{L,n}_{geo,\sigma}}
\newcommand{\Tmin}{\mathcal{T}^{L,q}_{min,\lambda}}
\newcommand{\M}{\mathcal{M}}
\newcommand{\eremk}{\hbox{}\hfill\rule{0.8ex}{0.8ex}}
\numberwithin{equation}{section}
\newcommand{\BL}{{\sf BL}}
\newcommand{\Co}{{\sf C}}
\newcommand{\Te}{{\sf T}}
\newcommand{\Mi}{{\sf M}}
\newcommand{\xh}{{\widehat x}}
\newcommand{\yh}{{\widehat y}}
\newcommand{\calA}{{\mathcal A}}
\newcommand{\calC}{{\mathcal C}}
\newcommand{\calG}{{\mathcal G}}
\newcommand{\calK}{{\mathcal K}}
\newcommand{\calL}{{\mathcal L}}
\newcommand{\calM}{{\mathcal M}}
\newcommand{\calT}{{\mathcal T}}
\newcommand{\bA}{{\boldsymbol A}}
\newcommand{\bmr}{{\boldsymbol r}}
\newcommand{\eps}{\varepsilon}
\newcommand{\bbN}{{\mathbb N}}
\newcommand{\bbR}{{\mathbb R}}
\newcommand{\bbS}{{\mathbb S}}
\newcommand{\EhpFEM}{Extended $hp$-FEM }
\newcommand{\EhpFEMp}{Extended $hp$-FEM}
\newcommand{\BK}{sinc BK-FEM }
\newcommand{\BKp}{sinc BK-FEM}
\newcommand{\bO}{{\mathbf O}}
\newcommand{\bP}{{\mathbf P}}
\begin{document}
\title{Exponential Convergence of $hp$ FEM for 
Spectral Fractional Diffusion in Polygons
\thanks{
The research of JMM was supported by the Austrian Science Fund (FWF) project F 65. 
Work performed in part while CS was visiting the Erwin Schr\"odinger Institute (ESI)
in Vienna in June-August 2018 during the ESI thematic period 
``Numerical Analysis of Complex PDE Models in the Sciences''. 
Research of CS supported
in part by the Swiss National Science Foundation, under Grant SNSF 200021-159940
}
}
\titlerunning{Exponential Convergence for Spectral Fractional Diffusion in Polygons}        

\author{Lehel Banjai \and
Jens M. Melenk \and
Christoph Schwab}

\institute{L. Banjai \at Maxwell Institute for Mathematical Sciences\\ School of Mathematical  
        \& Computer Sciences\\ Heriot-Watt University\\ Edinburgh EH14 4AS, UK \\ \email{l.banjai@hw.ac.uk}
\and 
J.M. Melenk \at Institut  f\"{u}r  Analysis und  Scientific Computing\\ 
Technische Universit\"{a}t Wien\\ A-1040  Vienna, Austria \\\email{melenk@tuwien.ac.at}
\and
C. Schwab \at Seminar for Applied Mathematics\\ ETH Z\"{u}rich, ETH Zentrum, 
HG  G57.1\\ CH8092 Z\"{u}rich, Switzerland \\ \email{christoph.schwab@sam.math.ethz.ch}}

\maketitle
\begin{abstract}
For the spectral fractional diffusion operator of order $2s\in (0,2)$
in bounded, curvilinear polygonal domains $\Omega\subset \bbR^2$ 
we prove exponential convergence of two classes of $hp$ discretizations 
under the assumption of analytic data (coefficients and source terms, 
without any boundary compatibility),
in the natural fractional Sobolev norm $\mathbb{H}^s(\Omega)$.
The first $hp$ discretization is based on 
writing the solution as a co-normal derivative
of a $2+1$-dimensional local, linear elliptic boundary value problem, 
to which an 
$hp$-FE discretization is applied.
A diagonalization in the extended variable reduces the numerical
approximation of the inverse of the spectral fractional diffusion operator
to the numerical approximation of a system of \emph{local, 
decoupled, second order reaction-diffusion equations in $\Omega$}.
Leveraging results on robust exponential convergence of 
$hp$-FEM for second order, linear reaction diffusion 
boundary value problems in $\Omega$, 
exponential convergence rates 
for solutions $u\in \mathbb{H}^s(\Omega)$ of $\calL^s u = f$
follow. 
Key ingredient in this $hp$-FEM are 
\emph{boundary fitted meshes with geometric mesh refinement towards $\partial\Omega$}.  

The second discretization is based on 
exponentially convergent numerical sinc quadrature approximations
of the Balakrishnan integral representation of $\calL^{-s}$ 
combined with $hp$-FE discretizations of a 
\emph{decoupled system of local, linear, 
      singularly perturbed reaction-diffusion equations in $\Omega$}.
The present analysis for either approach 
extends to (polygonal subsets $\widetilde{\calM}$
of) analytic, compact $2$-manifolds $\calM$, 
parametrized by a global, analytic chart $\chi$
with polygonal Euclidean parameter domain $\Omega\subset \bbR^2$.
Numerical experiments for model problems
in nonconvex polygonal domains and with incompatible data
confirm the theoretical results.

Exponentially small bounds on Kolmogoroff $n$-widths of
solutions sets for spectral fractional diffusion in polygons
are deduced.
\keywords{
Fractional diffusion \and
nonlocal operators \and
Dunford-Taylor calculus \and
anisotropic $hp$--refinement \and
geometric corner refinement \and
exponential convergence \and
$n$-widths.}
\subclass{26A33 \and   
65N12 \and   
65N30.   
}
\end{abstract}

\section{Introduction}
\label{S:introduction}
In recent years, the mathematical and numerical analysis of 
initial-boundary value problems for fractional differential operators
has received substantial attention. 
Their numerical treatment has to overcome several challenges. 
The first challenge arises from their nonlocal nature as integral operators.
A direct Galerkin discretization leads to fully populated system matrices, and 
compression techniques (see, e.g., \cite{KarkJMM19} and the references there) 
have to be brought to bear to make the discretization computationally tractable. 
An alternative to a direct Galerkin discretization of an integral operator, 
which is possible for the presently considered spectral fractional Laplacian, 
is to realize the nonlocal operator numerically
as a Dirichlet-to-Neumann operator for a local (but degenerate) 
elliptic problem.
While this approach, 
sometime referred to as ``Caffarelli-Silvestre'' extension (``CS-extension'' for short)
\cite{CS:07,ST:10} increases the spatial dimension by $1$, 
it permits to use the mathematical and numerical 
tools that were developed for local, integer order differential operators. 
In the present paper, we study several 
$hp$-FE discretizations of the resulting local (but degenerate) 
elliptic problem, to which we will refer as \emph{\EhpFEMp}. 

One alternative to the extension approach is
the representation of fractional 
powers of elliptic operators as Dunford-Taylor integrals 
proposed in \cite{BoPascFracRegAcc2017,BP:13}.
Discretizing such an integral leads to a sum 
of solution operators for \emph{local}, second order elliptic problems, 
which turn out to be singularly perturbed, 
but are amenable to established numerical techniques.
In the present paper, 
we also study this approach under the name \emph{sinc Balakrishnan FEM}
(\emph{\BKp} for short).

A second challenge arises from the fact that  the solutions of problems
involving fractional operators are typically not smooth, even for 
smooth input data 
(cf.\ the examples and discussion in \cite[Sec.~{8.4}]{BMNOSS17_732}).
Indeed, for the spectral fractional Laplacian, 
the behavior near a smooth boundary $\partial\Omega$
is 
$u \sim u_0 +  O(\operatorname{dist}(\cdot,\partial\Omega)^\beta)$ for 
some more regular $u_0$ and 
a $\beta\not\in{\mathbb N}$ \cite[Thm.~{1.3}]{CafStinga16}. 
Points of non-smoothness of $\partial\Omega$
introduce further singularities into the solution.  
The numerical resolution of both types of singularities requires 
suitably designed approximation spaces. 
For the spectral fractional Laplacian in two-dimensional
polygonal domains, 
we present a class of meshes in $\Omega$ with anisotropic, geometric
refinement
towards $\partial\Omega$ and with isotropic geometric refinement 
towards the corners of $\Omega$. 
We show that spaces of piecewise polynomials on such meshes 
can lead to exponential convergence. 

%
\begin{figure}[th]
\psfragscanon
\psfrag{A01}{\tiny $A_{J_1}^{(1)} = A_{0}^{(1)}$}
\psfrag{A11}{\tiny $A_{1}^{(1)}$}
\psfrag{A21}{\tiny $A_{2}^{(1)}$}
\psfrag{A31}{\tiny $A_{3}^{(1)}$}
\psfrag{Ajm11}{\tiny $A_{J_1-1}^{(1)}$}
\psfrag{G11}{\tiny $\Gamma_1^{(1)}$}
\psfrag{G21}{\tiny $\Gamma_2^{(1)}$}
\psfrag{G31}{\tiny $\Gamma_3^{(1)}$}
\psfrag{Gjm11}{\tiny $\Gamma_{J_1-1}^{(1)}$}
\psfrag{O01}{\tiny $\omega_{J_1}^{(1)}$}
\psfrag{A02}{\tiny $A_0^{(2)}$}
\psfrag{A12}{\tiny $A_1^{(2)}$}
\psfrag{G02}{\tiny $\Gamma_1^{(2)}$}
\psfrag{G12}{\tiny $\Gamma_2^{(2)}$}
\begin{overpic}[width=0.5\textwidth]{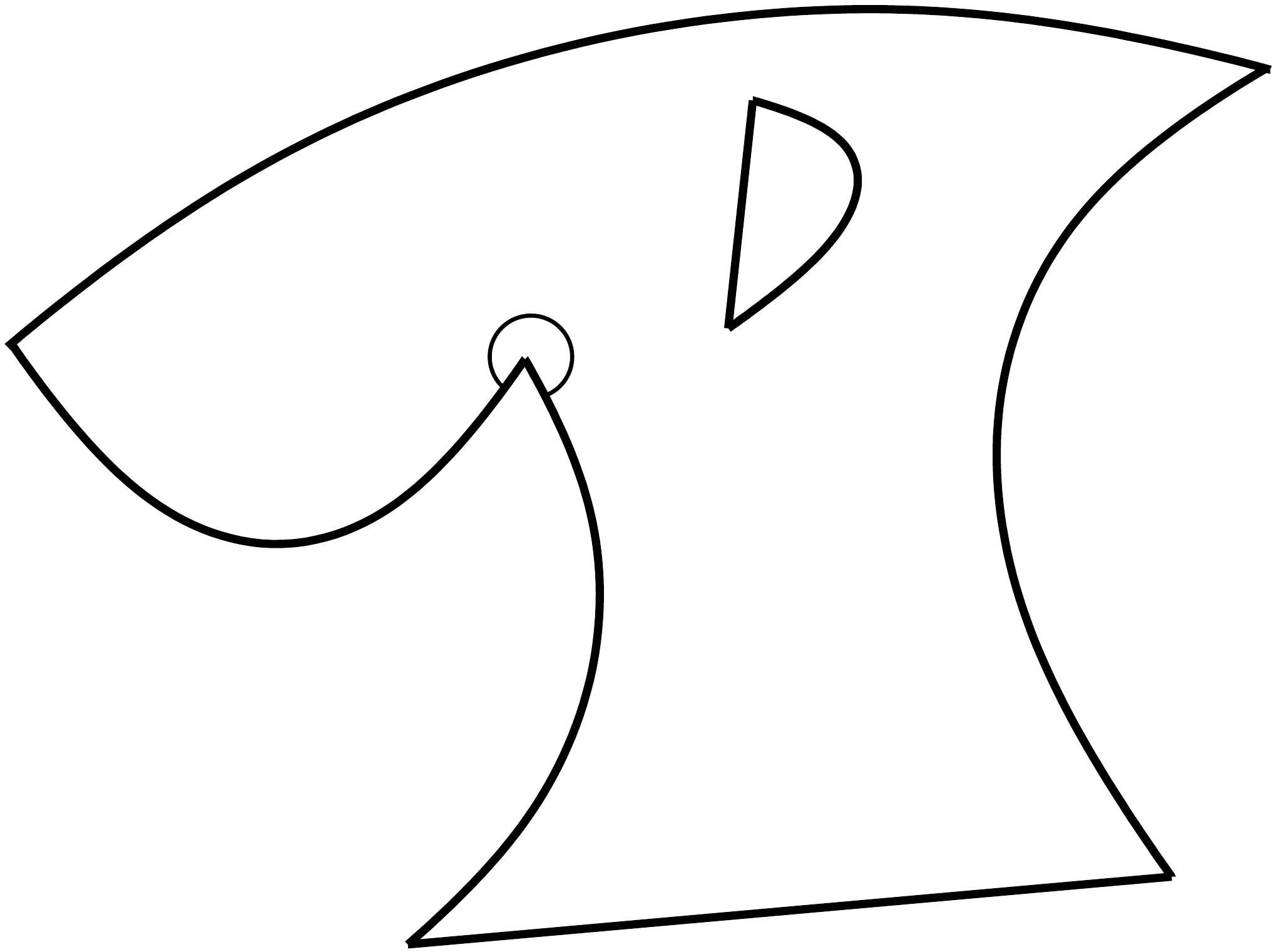}
\put(40,40){\tiny $\bA_{J_1}^{(1)} = \bA_{0}^{(1)}$}
\put(30,05){\tiny $\bA_{1}^{(1)}$}
\put(92,7){\tiny $\bA_{2}^{(1)}$}
\put(87,66){\tiny $\bA_{3}^{(1)}$}
\put(5,45){\tiny $\bA_{J_1-1}^{(1)}$}
\put(50,20){\tiny $\Gamma_1^{(1)}$}
\put(60,8){\tiny $\Gamma_2^{(1)}$}
\put(82,30){\tiny $\Gamma_3^{(1)}$}
\put(30,60){\tiny $\Gamma_{J_1-1}^{(1)}$}
\put(20,25){\tiny $\Gamma_{J_1}^{(1)}$}
\put(40,52){\tiny $\omega_{J_1}^{(1)}$}
\put(55,45){\tiny $\bA_0^{(2)}$}
\put(60,68){\tiny $\bA_1^{(2)}$}
\put(65,52){\tiny $\Gamma_1^{(2)}$}
\put(50,60){\tiny $\Gamma_2^{(2)}$}
\end{overpic}
\psfragscanoff
\caption{\label{fig:curvilinear-polygon} Example of a curvilinear polygon}
\end{figure}

%
\subsection{Geometric Preliminaries}
\label{sec:GeoPrel}
%
As in \cite{banjai-melenk-schwab19-RD},
we consider a bounded Lipschitz domain 
$\Omega \subset \bbR^2$ that is a curvilinear
polygon as depicted in Fig.~\ref{fig:curvilinear-polygon}. 
The boundary $\partial \Omega$ is assumed to consist 
of $J \in \bbN$ closed curves $\Gamma^{(i)}$. 
Each curve $\Gamma^{(i)}$ in turn is assumed to comprise
$J_i \in \bbN$ many open, disjoint, \emph{analytic arcs} $\Gamma^{(i)}_j$, 
$j=1,\ldots,J_i$, with 
$
\overline{\Gamma^{(i)}} = \bigcup_{j=1}^{J_i} \overline{\Gamma^{(i)}_j}\;,
\quad i=1,\ldots,J\;.
$
The arcs $\Gamma^{(i)}_j$ are assumed further to admit 
\emph{nondegenerate, analytic parametrizations}, 
$$
\Gamma^{(i)}_j
=
\left\{ {\mathbf x}^{(i)}_j (\theta)\,  | \, \theta \in (0,1) \right\}\;,
\quad 
i=1,\ldots,J, \quad j=1,\ldots,J_i \;.
$$
The coordinate functions 
$x_j^{(i)}$, $y_j^{(i)}$ of ${\mathbf x}^{(i)}_j(\theta) = (x_j^{(i)}(\theta),y_j^{(i)}(\theta))$
are assumed to be (real) analytic functions of $\theta \in  [0,1]$ and 
such that 
%
$
\min_{\theta \in [0,1]}
|\dot{\mathbf x}_j^{(i)}(\theta)|^2
> 0$ for $
j=1,\ldots,J_i$,  
$i=1,\ldots,J$. 
The end points of the arcs $\Gamma^{(i)}_j$ are denoted as
$\bA^{(i)}_{j-1} = {\mathbf x}_j^{(i)}(0)$ and $\bA^{(i)}_j = {\mathbf x}_j^{(i)}(1)$. 
We enumerate these points counterclockwise by indexing cyclically
with $j$ modulo $J_i$, 
thereby identifying in particular $\bA^{(i)}_{0} := \bA^{(i)}_{J_i}$.
%
The interior angle at $\bA_j^{(i)}$ is denoted $\omega_j^{(i)} \in (0,2\pi)$. 
For notational simplicity, we assume henceforth that $J=1$, i.e., $\partial\Omega$ consists 
of a single component of connectedness. 
We write $\bA_j = \bA_j^{(1)}$, $\Gamma_j$ for $\Gamma_j^{(1)}$. 
%
\subsection{Spectral Fractional Diffusion}
\label{sec:FracDiff}
%
When dealing with fractional operators, care must be
exercised in stating the definition of the fractional
powers. Here, we consider the so-called 
\emph{spectral fractional diffusion operators}
as investigated in \cite{CS:07}. 
We refer to the surveys \cite{RosOton2016Surv,BonitoEtAl_FracSurv2017,AinsworthEtAl_FracSurv2018}
and the references there for a comparison of the different
definitions of fractional powers of the Dirichlet Laplacian.

We consider 
the linear, elliptic, self-adjoint, second order differential operator 
$w \mapsto  \calL w = - \DIV( A \GRAD w ) $,
in a bounded, curvilinear polygon $\Omega\subset \R^2$ as described
in Section~\ref{sec:GeoPrel}.
The diffusion coefficient $A\in L^\infty(\Omega,\GL(\R^2))$ 
is assumed symmetric,  uniformly positive definite.
The data $A$ and $f$ are assumed analytic in $\overline{\Omega}$.
We quantify analyticity of $A$ and $f$ 
by assuming that there are $C_A$, $C_f > 0$ 
such that 
\begin{equation}\label{eq:AnRegAc}
\forall p \in \bbN_0:\;\;
\| |D^p A| \|_{L^\infty(\Omega)} \leq C_A^{p+1} p!\;,
\quad 
\| |D^p f| \|_{L^\infty(\Omega)} \leq C_f^{p+1} p!\;.
\end{equation}
Here, the notation $|D^p A|$ signifies $\sum_{|\alpha| = p} | D^\alpha A |$, 
with the usual multi-index convention $D^\alpha$ 
denoting mixed weak derivatives of order $\alpha \in \bbN_0^2$
whose total order $|\alpha| = \alpha_1 + \alpha_2$.
Further, we employ standard notation for (fractional) 
Sobolev spaces $H^t(\Omega)$, consistent with the notation 
and definitions in 
\cite{mcLean}. 

We introduce the ``energy'' inner product $\blfa{\Omega}(\cdot,\cdot)$ 
on $H^1_0(\Omega)$ associated with the differential operator $\calL$ 
by 
\begin{equation}
\label{eq:blfOmega}
\blfa{\Omega}(w,v) = \int_\Omega \left( A \nabla w \cdot \nabla v 
\right) \diff x'
\;.
\end{equation}
The operator 
$\calL: H^1_0(\Omega) \to H^{-1}(\Omega)$
induced by this bilinear form is an isomorphism, 
due to the (assumed) positive definiteness of $A$.
Let 
$\{\lambda_k, \varphi_k \}_{k \in \mathbb{N}} \subset \R^+ \times H_0^1(\Omega)$
be a sequence of eigenpairs of $\calL$, normalized such that  
$\{\varphi_k\}_{k \in \mathbb{N}}$ is an orthonormal basis of $L^2(\Omega)$ 
and an orthogonal basis of $(H_0^1(\Omega), \blfa{\Omega}(\cdot,\cdot))$.
We introduce, for $\sigma \ge 0$, the domains of fractional powers of $\calL$
as 
\begin{equation}
\label{def:Hs}
  {\mathbb H}^\sigma(\Omega) 
= \left\{ v = \sum_{k=1}^\infty v_k \varphi_k: \| v \|_{{\mathbb H}^\sigma(\Omega)}^2 
= \sum_{k=1}^{\infty} \lambda_k^\sigma v_k^2 < \infty \right\}.
\end{equation}
We denote by 
${\mathbb H}^{-\sigma}(\Omega)$ the dual space of ${\mathbb H}^{\sigma}(\Omega)$. 
Denoting by $\langle \cdot,\cdot \rangle$ the 
${\mathbb H}^{-\sigma}(\Omega)\times  {\mathbb H}^{\sigma}(\Omega)$ 
duality pairing
that extends the standard $L^2(\Omega)$ inner product, we can identify 
elements $f \in {\mathbb H}^{-\sigma}(\Omega)$ 
with sequences $\{f_k\}_k$ 
(written formally as $\sum_k f_k \varphi_k$)
such that 
$\|f\|^2_{{\mathbb H}^{-\sigma}(\Omega)} = \sum_k |f_k|^2 \lambda_k^{-\sigma}   <\infty$.
With this identification, we 
can extend the definition of the norm in (\ref{def:Hs}) to $\sigma < 0$. 
Furthermore, the linear operator 
$\calL^s:\Hs \rightarrow \Hsd: v \mapsto \sum_{k=1}^\infty v_k \lambda_k^s \varphi_k$
is bounded and
the Dirichlet problem for the fractional diffusion in $\Omega$ may be stated as:
given a fractional order $s \in (0,1]$ and $f \in \Hsd$, 
find $u\in {\mathbb H}^{s}(\Omega)$ such that
\begin{equation}
\label{fl=f_bdddom}
    \calL^s u = f \quad \text{in } \Omega\;.
\end{equation}
The ellipticity estimate 
$\langle w, \calL^s w \rangle \geq \lambda_1^s \| w \|^2_{{\mathbb H}^{s}(\Omega)}$
valid for every $w\in {\mathbb H}^{s}(\Omega)$ 
implies the unique solvability of \eqref{fl=f_bdddom} 
for every $f\in {\mathbb H}^{-s}(\Omega)$.
The $hp$-FEM approximations of \eqref{fl=f_bdddom} 
developed and analyzed in the present work 
are not based on explicit or approximated eigenfunctions 
but instead on the localization of the operator $\calL^s$ in terms
of extension discussed in Sec.~\ref{sec:SpecFrcLap} and on the 
Dunford-Taylor integral discussed in Sec.~\ref{S:BalkrFrm}, 
the so-called Balakrishnan formula.  
\begin{remark}[compatibility condition] 
\label{remk:compatibility}
As discussed in \cite[Lemma~{1}, Rem.~{1}]{BMNOSS17_732} 
the spectral fractional Laplacian has the mapping property 
$\calL^s: {\mathbb H}^{s+\sigma}(\Omega) \rightarrow 
{\mathbb H}^{-s+\sigma}(\Omega)$, $\sigma \ge 0$. 
For smooth coefficients $A$ and $\partial\Omega$, the spaces 
${\mathbb H}^{s+\sigma}(\Omega)$, $\sigma \ge 0$, are 
subspaces of the Sobolev spaces $H^{s+\sigma}(\Omega)$. In fact, 
for $-s+\sigma > 1/2$, the spaces ${\mathbb H}^{-s+\sigma}(\Omega)$ 
are proper subspaces of $H^{-s+\sigma}(\Omega)$ as they 
encode some boundary conditions on $\partial\Omega$. E.g., for 
$f \in {\mathbb H}^{-s+\sigma}(\Omega)$ with $-s+\sigma \ge 1/2$ one has 
$f|_{\partial\Omega} = 0$. 
That is, $f \in H^{-s+\sigma}(\Omega)$ must satisfy additionally 
\emph{compatibility conditions} on $\partial\Omega$ 
to ensure $u \in H^{s+\sigma}(\Omega)$.  
\eremk
\end{remark}
%
\subsection{Contributions}
\label{S:contrib}
We briefly highlight the principal contributions of this work.
For the nonlocal, spectral fractional diffusion problem \eqref{fl=f_bdddom}
in bounded, curvilinear polygonal domains $\Omega$ as described in 
Section \ref{sec:GeoPrel} and with analytic data $A$ and $f$ 
as in \eqref{eq:AnRegAc}, and without any boundary compatibility,
we develop two $hp$-FEMs for \eqref{fl=f_bdddom}
that converge exponentially in terms of the number of degrees of freedom
$N_{DOF}$ in $\Omega$. The setting covers in particular also 
boundary value problems for fractional surface diffusion on analytic 
surface pieces as in the setting of Section \ref{S:FrcLapManif}.
Key insight in our error analysis is that 
either method, based on the extension of \eqref{fl=f_bdddom}
combined with a diagonalization procedure as in \cite[Sec. 6]{BMNOSS17_732}
or on a contour-integral representation of $\calL^s$ combined with
an exponentially converging sinc quadrature, reduce the numerical
solution of \eqref{fl=f_bdddom} to the numerical solution of
\emph{local, singularly perturbed second order reaction-diffusion 
problems in $\Omega$}. 
Drawing on analytic regularity and corresponding
$hp$-FEM in $\Omega$ for these reaction-diffusion problems with 
\emph{robust, exponential convergence} as developed in 
\cite{MelCS_RegSingPert,melenk-schwab98,melenk02,banjai-melenk-schwab19-RD}, 
we establish here exponential convergence rate bounds for
solutions of \eqref{fl=f_bdddom}. As we showed in \cite{melenk02,banjai-melenk-schwab19-RD},
the singular perturbation character of the reaction-diffusion problems in $\Omega$ mandates 
both, \emph{geometric corner mesh refinement} and 
\emph{anisotropic geometric boundary mesh refinement} to resolve the algebraic corner
and boundary singularities that occur in solutions to \eqref{fl=f_bdddom}.

Before proceeding to the main part of this paper, we briefly recall
the localization due to Caffarelli-Silvestre 
and the contour integral representation of Balakrishnan \cite{Balakr1960}.
\subsection{Caffarelli-Silvestre extension}
\label{sec:SpecFrcLap}
%
In \cite{CS:07} the (full space) fractional Laplacian $\calL^s$ was localized 
via a singular elliptic PDE depending on one extra variable and thus
represented as Dirichlet-to-Neumann problem for an elliptic problem
in a half-space. 
Cabr\'e and Tan \cite{CT:10} and Stinga and Torrea \cite{ST:10} 
extended this to bounded domains $\Omega$ and more general operators, 
thereby obtaining an extension posed on the
semi--infinite cylinder $\C := \Omega \times (0,\infty)$.
Their extension 
is given by the 
\emph{local boundary value problem}
\begin{equation}
\label{alpha_harm_intro}
  \begin{dcases}
    \mathfrak{L} \ue = -\DIV\left( y^\alpha {\boldsymbol{\mathfrak A}} \nabla \ue \right) 
= 0  
          & \text{ in } \C=\Omega \times (0,\infty), 
    \\
    \ue = 0  &\text{ on } \partial_L \C, 
    \\
    \partial_{\nu^\alpha} \ue = d_s f  &\text{ on } \Omega \times \{0\},
  \end{dcases}
\end{equation}
where ${\boldsymbol{\mathfrak A}} = {\rm diag} \{A,1\} \in L^\infty(\C,\GL(\bbR^{d+1}))$,
$\partial_L \C := \partial \Omega \times (0,\infty)$, 
$d_s: = 2^{1-2s} \Gamma(1-s)/\Gamma(s)>0$ 
and where $\alpha = 1-2s \in (-1,1)$ \cite{CS:07,ST:10}. 
The so--called conormal exterior derivative of $\ue$ at $\Omega \times \{ 0 \}$ is
\begin{equation}
\label{def:lf}
\partial_{\nu^\alpha} \ue = -\lim_{y \rightarrow 0^+} y^\alpha \ue_y.
\end{equation}
The limit in \eqref{def:lf} is in the distributional sense \cite{CT:10,CS:07,ST:10}. 
Fractional powers of $\calL^s$ in \eqref{fl=f_bdddom} and 
the Dirichlet-to-Neumann operator of problem \eqref{alpha_harm_intro} 
are related by \cite{CS:07,CafStinga16}
\begin{equation} \label{eq:identity}
  d_s \calL^s u = \partial_{\nu^\alpha} \ue \quad \text{in } \Omega\;.
\end{equation}
We write $x = (x',y)\in \C$ with $x' \in \Omega$ and $y > 0$. 
For $D \subset \mathbb{R}^{d} \times \mathbb{R}^+$, 
we define $L^2(y^\alpha,D)$ as the Lebesgue space with the measure $y^\alpha \diff x$  and $H^1(y^{\alpha},D)$ as the weighted Sobolev space
\begin{equation}
\label{eq:H1y}
  H^1(y^{\alpha},D) := \left\{
    w \in L^2(y^\alpha,D)\;\colon\; |\nabla w | \in L^2(y^\alpha,D)
  \right\}
\end{equation}
equipped with the norm
\begin{equation}
  \label{eq:H_norm}
  \|w\|_{H^1(y^\alpha,D)} = \left(\|w\|^2_{L^2(y^\alpha,D)}+\|\nabla w\|^2_{L^2(y^\alpha,D)}\right)^{1/2}.
\end{equation}
To investigate \eqref{alpha_harm_intro} 
we include the homogeneous boundary condition on the lateral boundary 
$\partial_L \C$ by setting 
\begin{equation}
  \HL(y^{\alpha},\C) 
:= \left\{ 
   w \in H^1(y^\alpha,\C)\;:\; w = 0 \text{ on } \partial_L \C 
  \right\}.
\end{equation}
The bilinear form 
$\blfa{\C}: \HL(y^{\alpha},\C) \times  \HL(y^{\alpha},\C) \to \R$ 
defined by
\begin{equation}
\label{eq:blf-a} 
\blfa{\C}(v,w) 
=  
\int_\C y^\alpha ({\boldsymbol{\mathfrak A}} \nabla v \cdot \nabla w 
) \diff x' \diff y,
\end{equation}
is continuous and coercive on $\HL(y^{\alpha},\C)$.
The energy norm $\normC{\cdot}$ 
on $\HL(y^\alpha,\C)$ induced by the inner product 
$a_{\C}(\cdot,\cdot)$ is given by
\begin{equation}
\label{eq:norm-C} 
\normC{v}^2:= \blfa{\C}(v,v) \sim \|\nabla v\|^2_{L^2(y^\alpha,\C)}\;.
\end{equation}
For $w \in H^1(y^\alpha,\C)$ we denote by $\tr w$ its trace
on $\Omega \times \{0\}$, which 
connects the spaces $\HL(y^\alpha,\C)$ and $\Hs$ 
(cf.\ \cite[Prop.~{2.5}]{NOS}) via
\begin{equation}
\label{Trace_estimate}
\tr \HL(y^\alpha,\C) = \Hs,
\qquad
  \|\tr w\|_{\Hs} \leq C_{\tr} \| w \|_{\HL(y^\alpha,\C)}.
\end{equation}
With these definitions at hand, 
the weak formulation of \eqref{alpha_harm_intro} 
is to find
\begin{equation}
\label{eq:ue-variational} 
\ue \in  \HL(y^{\alpha},\C): \;
\forall v \in \HL(y^\alpha,\C) \ \colon \ 
\blfa{\C}(\ue,v) = d_s \langle f,\tr v \rangle . 
\end{equation}
\begin{remark}[regularity of $\ue$ for $s = 1/2$]
\label{rem:s=1/2}
In the special case $s = 1/2$ and $A = \operatorname{Id}$ 
in \eqref{fl=f_bdddom} 
the operator $ \mathfrak{L}$ 
in the CS extension \eqref{alpha_harm_intro} in $\calC$ 
coincides with the Laplacian in $\calC$. Therefore, 
the solution $\ue$ will, 
in general, exhibit \emph{algebraic singularities}
on $\partial \Omega$, even if $\partial \Omega$ is smooth.
\eremk
\end{remark}
%
\subsection{Balakrishnan Formula}
\label{S:BalkrFrm}
The second approach we take is via the \emph{spectral integral representation of fractional powers of elliptic operators} going back to \cite{Balakr1960}. 
For $0<s<1$ and $\calL = -\DIV(A\GRAD)$ with homogeneous Dirichlet boundary conditions
the bounded linear operator 
$\calL^{-s}:\mathbb{H}^{-s}(\Omega) \rightarrow \mathbb{H}^s(\Omega)$
admits the following representation with $c_B = \pi^{-1} \sin(\pi s)$:
\begin{equation}\label{eq:BlkrRep}
\begin{array}{rl}
\calL^{-s} 
& = \displaystyle  
c_B \int_0^\infty \lambda^{-s} (\lambda I + \calL)^{-1} \diff\lambda
=
c_B \int_{-\infty}^\infty e^{(1-s)y}(e^yI + \calL)^{-1} \diff y 
\\
& \displaystyle 
= 
c_B \int_{-\infty}^\infty e^{-sy} (I + e^{-y}\calL)^{-1} \diff y \;.
\end{array}
\end{equation}
The representations \eqref{eq:BlkrRep}
were used in \cite{BoPascFracRegAcc2017,BP:13} in conjunction with
an exponentially convergent, so-called sinc quadrature approximation of 
\eqref{eq:BlkrRep} (see, e.g., \cite{Stenger83} for details) 
and an $h$-version Finite Element projection in $\Omega$ 
to obtain numerical approximations of the 
fractional diffusion equation \eqref{fl=f_bdddom} in $\Omega$.
Here, we generalize the results in \cite{BoPascFracRegAcc2017,BP:13} to the $hp$-FEM,
establishing exponential convergence rates in polygonal domains $\Omega$ for the
resulting \BK 
for data $A$ and $f$ that are analytic in $\overline{\Omega}$ (cf.\ \eqref{eq:AnRegAc}) 
without boundary compatibility of $f$.


\subsection{Outline}
\label{S:outline}
The outline of the remainder of the paper is as follows.
The following 
Section~\ref{S:FEMy} describes the $hp$-FE spaces and Galerkin methods for \eqref{alpha_harm_intro} 
based on tensor products of discretizations in the $x$ and the $y$ variable. 

Section~\ref{S:diagonalization-abstract-setting} develops the diagonalization
of the $hp$-FE semi-discretization in the extended variable. 
In particular, in Section~\ref{S:hpFEM} we prove exponential
convergence of an $hp$-FE \emph{semidiscretization} in $(0,\infty)$.
The diagonalization reduces the semidiscrete approximation of the 
CS-extended, localized problem to a collection of 
decoupled, linear second order reaction-diffusion problems in $\Omega$.

Section~\ref{S:approx-sing-perturb} 
presents the exponential convergence results from 
\cite{banjai-melenk-schwab19-RD} of $hp$-FE discretizations of 
linear, second order singularly perturbed reaction-diffusion equations 
in $\Omega$ and establishes robust (with respect to the 
perturbation parameter $\eps$) exponential convergence results for these. 

Section~\ref{S:hpx} completes the proof of exponential convergence 
for the \EhpFEM by applying
the $hp$-FEM from Section~\ref{S:approx-sing-perturb}
in $\Omega$ for the reaction-diffusion problems obtained from the
diagonalization process in Section~\ref{S:diagonalization-abstract-setting}. 
Section~\ref{S:hpx} presents in fact two distinct $hp$-FE discretizations: 
a pure Galerkin method ({\bf Case B}) and 
a method based on discretizing 
after diagonalization each decoupled problem separately ({\bf Case A}). 
The latter approach features slightly better complexity estimates.

Section~\ref{S:ExpConvIII} is devoted to the analysis of the \BKp. 
There once more the numerical approximation of the fractional Laplacian
is reduced 
to the numerical solution of a sequence of \emph{local} 
linear, second order reaction-diffusion problems in $\Omega$. 
Applying exponential convergence bounds for sinc approximation
and for $hp$-FEM for reaction-diffusion problems in $\Omega$ 
in Section~\ref{S:approx-sing-perturb} from \cite{banjai-melenk-schwab19-RD},
once again exponential convergence for the resulting \BK 
for the spectral version of the fractional diffusion operator 
is established. 
As in Section~\ref{S:hpx}, we separately discuss the 
possibilities of approximating the solutions of the decoupled problems 
from the same space ({\bf Case B}) or from different spaces ({\bf Case A}).
Section~\ref{S:NumExp} has numerical experiments verifying
the theoretical convergence results. 
Section~\ref{S:Concl} has a summary and
outlines several generalizations and directions for further research.
In particular, we address in Section~\ref{S:FrcLapManif} the extension
to fractional diffusion on manifolds.
In Section~\ref{sec:Nwidth}, we discuss several exponential bounds
on Kolmogoroff $n$-widths of solution sets for 
spectral diffusion in polygons that follow from our results.
\subsection{Notation} 
\label{sec:Notat}
Constants 
$C$, $\gamma$, $b$ may be different in each occurence,
but are independent of critical parameters. 
We denote by $\widehat{S}:=(0,1)^2$ the reference square and by 
$\widehat{T}:=\{(\xi_1,\xi_2)\,|, 0 < \xi_1 < 1, \ 0 < \xi_2 < \xi_1\}$
the reference triangle. Sets of the form $\{x = y\}$, $\{x = 0 \}$, $\{x = y\}$
etc. refer to edges and diagonals of $\widehat{S}$ and analogously 
$\{y \leq x\} = \{(x,y) \in \widehat{S}\,|\, y \leq x\}$. 
${\mathbb P}_q$ denotes the space of polynomials of total degree $q$ and 
${\mathbb Q}_q$ the tensor product space of polynomial of degree $q$ 
in each variable separately. 
%
\section{$hp$-FEM Discretization}
\label{S:FEMy}
In this section, we introduce some $hp$-FEM space in both the $x$ and the $y$-variable
on which the \EhpFEM will be based. In particular, we introduce the geometric 
meshes $G^M_{geo,\sigma}$ that are used for the discretization in the $y$-variable. 
%
\subsection{Notation and FE spaces}
\label{S:NtFESpc}
\subsubsection{Meshes and FE spaces on $(0,\Y)$}
\label{S:GeoMes0Y}
%
Given a truncation parameter $\Y$ 
and 
a mesh $\calG^M = \{ I_m \}_{m=1}^M$ in $[0,\Y]$ consisting of $M$
intervals $I_m = [y_{m-1},y_m]$, with $0 = y_0 < y_1 < \cdots < y_M = \Y$,
we associate to $\calG^M$ a \emph{polynomial degree distribution} 
$\bmr = (r_1,r_2,\dots,r_M) \in \bbN^M$. 
We introduce the $hp$-FE space 
\begin{align*}
S^\bmr((0,\Y),\calG^M) 
& = 
\bigl\{ v_M \in H^1(\bbR)\,\colon\, \operatorname{supp} v \subset [0,\Y], \\
& \qquad  v_M|_{I_m} \in \mathbb{P}_{r_m}(I_m), I_m \in \calG^M,  
           m=1, \dots, M \bigr \},
\end{align*}
where $\mathbb{P}_r$ denotes the space of polynomials of degree $r$.
We will primarily work with the following piecewise polynomial 
space $S^\bmr_{\{\Y\}} ((0,\Y),\calG^M)\subset H^1(0,\infty)$ of functions 
that vanish on $[\Y,\infty)$:
\begin{equation}\label{eq:SrGM}
S_{\{\Y\}}^\bmr((0,\Y),\calG^M) = 
\left \{ v \in S^\bmr((0,\Y),\calG^M)\,\colon\, v(\Y) = 0\right\}.
\end{equation}
For constant polynomial degree $r_i=r \geq 1$, $i=1,\ldots,M$, we set
$S_{\{\Y\}}^r((0,\Y),\calG^M)$. 
Henceforth, we abbreviate 
\begin{equation}
\calM:= \operatorname*{dim} S^{\bmr}_{\{\Y\}}((0,\Y),\calG^M) \sim M^2 \;\;\mbox{as}\;\;M\to\infty\;.
\end{equation}
Of particular interest will be \emph{geometric meshes} 
$\calG^M_{geo,\sigma}$ on $[0,\Y]$,
with $M$ elements and grading factor 
$\sigma \in (0,1)$: $\{I_i\,|\,i=1,\ldots,M\}$ 
with elements $I_1 = [0,\Y \sigma^{M-1}]$ and 
$I_i = [\Y \sigma^{M-i+1},\Y \sigma^{M-i}]$ for $i=2,\ldots,M$.
On geometric meshes $\calG^M_{geo,\sigma}$ on $[0,\Y]$, 
we consider a \emph{linear polynomial degree vector $\bmr = \{ r_i \}_{i=1}^M$ 
with slope $\slope>0$} which is defined by 
\begin{equation}\label{eq:lindeg}
r_i := 1 + \lceil \slope (i-1) \rceil \} \;,\quad i=1,2,\ldots,M.
\end{equation}
For geometric meshes and linear degree vectors we set 
\begin{equation}
\calM_{geo} 
:= 
\operatorname*{dim} S^{\bmr}_{\{\Y\}}((0,\Y),\calG^M) \sim M^2 
\;\;\mbox{as}\;\;M\to\infty
\end{equation}
with constants implied in $\sim$ depending on $\slope>0$. 
\subsubsection{$hp$-FEM in $\Omega$}
\label{S:hpFEMOmega}
In the polygon $\Omega$, 
we consider Lagrangian FEM of uniform\footnote{We adopt uniform polynomial degree $q\geq 1$
here to ease notation and presentation. All approximation results admit
lower degrees in certain parts of the triangulations.  
This will affect, however, only constants in the error bounds, and
will not affect convergence rates in the ensuing exponential convergence
estimates.}
polynomial degree $q\geq 1$
based on regular triangulations of $\Omega$ denoted by $\calT$.
We admit both triangular and quadrilateral elements $K\in \calT$,
but \emph{do not} assume shape regularity. In fact, 
as we shall explain in Section~\ref{S:approx-sing-perturb}
ahead, 
\emph{anisotropic mesh refinement towards $\partial \Omega$} will be required
to resolve singularities at the singular support $\partial\Omega$ 
that are generically present in solutions of fractional PDEs 
(cf.\ Remark~\ref{rem:s=1/2}).
%
We introduce, for a regular 
(in the sense of \cite[Def.~{2.4.1}]{melenk02})
triangulation $\calT$ of $\Omega$ 
comprising curvilinear triangular or quadrilateral elements $K\in \calT$
with associated analytic element maps $F_K:\widehat{K} \rightarrow K$ 
(where $\widehat K \in \{\widehat T, \widehat S\}$ is either the reference
triangle or square depending on whether $K$ is a curvilinear triangle
or quadrilateral)
the FE space 
\begin{equation}
\label{eq:S^q_0}
S^q_0(\Omega,\calT)
= 
\left \{ v_h \in C(\bar \Omega): v_h|_{K}\circ F_K \in V_{q}(\widehat K) \quad 
\forall K \in \calT, \ v_h|_{\partial \Omega} = 0 \right \}.
\end{equation}
Here, for $q \ge 1$, the local polynomial space 
$V_q(\widehat K) = {\mathbb P}_q$ if $\widehat K = \widehat T$ 
and $V_q(\widehat K) = {\mathbb Q}_q$ if $\widehat K = \widehat S$.

\subsubsection{Tensor product $hp$-FE approximation}
\label{S:TPhpFE}
One $hp$-FE approximation of the extended problem 
\eqref{alpha_harm_intro} will be based on the
finite--dimensional \emph{tensor product spaces} of the form 
\begin{equation}\label{eq:TPFE}
\V^{q,\bmr}_{h,M}(\calT,\calG^M) 
:= 
S^q_0(\Omega,\calT) \otimes S_{\{ \Y\}}^\bmr((0,\Y),\calG^M)
\subset \HL(y^{\alpha},\C)\;,
\end{equation}
where $\calT$ is a regular triangulation of $\Omega$. 
To analyze this  method,
we consider semidiscretizations based on the following (infinite--dimensional, closed) 
Hilbertian tensor product space:  
\begin{equation}\label{eq:xySemiDis}
\begin{array}{l} 
\displaystyle
\displaystyle
\V^\bmr_{M}(\C_\Y) 
:= H^1_0(\Omega)\otimes S_{\{\Y\}}^\bmr((0,\Y),\calG^M) \subset \HL(y^{\alpha},\C)
\;.
\end{array}
\end{equation}
Here, the argument $\C_\Y$ indicates that spaces of functions 
supported in $\overline{\Omega} \times [0,\Y]$ are considered. 
Galerkin projections onto the spaces
$\V^{q,\bmr}_{h,M}(\calT^q , \calG^M)$ and 
$\V^\bmr_{M}(\C_\Y)$
with respect to the inner product 
$\blfa{\C}(\cdot,\cdot)$ are denoted by 
$G^{q,\bmr}_{h,M} $ and $G_M^\bmr$, respectively.
For the CS-extension $\ue$, i.e., the solution of (\ref{eq:ue-variational}), 
the Galerkin projections $G^{q,\bmr}_{h,M} \ue$
and $G^{\bmr}_{M} \ue$ are characterized by 
\begin{align}
\label{eq:fully-discrete}
a_\C(G^{q,\bmr}_{h,M} \ue, v)  =  d_s \langle f, \tr v\rangle \quad \forall v \in \V^{q,\bmr}_{h,M}(\calT,\calG^M) , \\
\label{eq:semi-discrete}
a_\C(G^{\bmr}_{M} \ue, v)   = d_s \langle f, \tr v\rangle \quad \forall v \in \V^{\bmr}_{M}(\calT,\calG^M). 
\end{align}
\section{Approximation based on semidiscretization in $y$}
\label{S:diagonalization-abstract-setting}
%
A key step in the $hp$-FE discretization in $(0, \Y)$ is, 
as in \cite{BMNOSS17_732},
the \emph{diagonalization} of the semidiscretized, truncated extension problem
with solution $G^{\bmr}_{M} \ue$ given by (\ref{eq:semi-discrete}). 
\subsection{Exponential Convergence of $hp$-FEM in $(0,\infty)$}
\label{S:hpFEM}
As in \cite{BMNOSS17_732,MPSV17},
we exploit the analytic regularity of the extended solution $\ue$ with respect 
to the extended variable $y$.  
It results in \emph{exponential convergence} of the 
$hp$-semidiscretization error  $\ue - G^{\bmr}_M \ue$ in $(0,\Y)$, if geometric
meshes $\calG^M_{geo,\sigma}$ and a truncation parameter $\Y \sim M$ are used.
%
\begin{lemma}[exponential convergence, \protect{ \cite[Lemma~{6.2}]{BMNOSS17_732}}]
\label{lemma:semidiscretization-error} 
Fix $c_1 < c_2$.  
Let $f \in {\mathbb H}^{-s+\nu}(\Omega)$ for some $\nu \in (0,s)$. 
Assume that $\Y$ satisfies $c_1 M \leq \Y \leq c_2 M$, and 
consider the geometric mesh $\calG^M_{geo,\sigma}$ on $(0,\Y)$
and the linear degree vector $\bmr$ with slope $\slope>0$. 
Let $\ue$ 
be given by (\ref{eq:ue-variational}) 
and $G^{\bmr}_M \ue$ be the Galerkin projection onto 
$\V^{\bmr}_{M}(\calT,\calG^M)$
given by 
(\ref{eq:semi-discrete}).  
Then there exist $C$, $b > 0$ 
(depending solely on $s$, $\calL$, $c_1$, $c_2$, $\sigma$, $\nu$, $\slope$)
such that 
\begin{equation}
\label{eq:lemma:semidiscretization-error-10}
\|\nabla (\ue - G^{\bmr}_M \ue)\|_{L^2(y^\alpha,\C)} 
\leq 
C e^{-b M} \|f\|_{{\mathbb H}^{-s+\nu}(\Omega)}
\;.
\end{equation}
Furthermore, (\ref{eq:lemma:semidiscretization-error-10}) also holds for constant polynomial
degree $\bmr  = (r,\ldots,r)$ if $c_3 M \leq r \leq c_4 M$ for some fixed $c_3$, $c_4>0$. 
The constant $b> 0$ then depends additionally on $c_3$, $c_4$. 
\end{lemma}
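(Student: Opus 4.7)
The plan is to combine C\'ea's lemma with an explicit mode-wise quasi-interpolant built from the spectral decomposition of $\calL$. Since the Galerkin projection $G^{\bmr}_M \ue$ is quasi-optimal in the energy norm $\normC{\cdot}$ on $\HL(y^\alpha,\calC)$, it suffices to exhibit some $u_M \in \V^{\bmr}_M(\calC_\Y)$ with $\normC{\ue - u_M}\le Ce^{-bM}\|f\|_{\mathbb{H}^{-s+\nu}(\Omega)}$. Expanding $f=\sum_k f_k\varphi_k$ and $\ue(x',y)=\sum_k \hat u_k(y)\varphi_k(x')$ in the $L^2(\Omega)$-orthonormal eigenbasis of $\calL$ diagonalizes both the bilinear form $\blfa{\calC}(\cdot,\cdot)$ and the trace condition, so that each coefficient $\hat u_k$ is determined by a one-dimensional, Bessel-type two-point problem on $(0,\infty)$ solved explicitly by $\hat u_k(y)=c_s f_k\lambda_k^{-s}(\sqrt{\lambda_k}\,y)^s K_s(\sqrt{\lambda_k}\,y)$. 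These modes are real-analytic on $(0,\infty)$, possess an algebraic singularity of type $y^{2s}$ (or a $\log$-type singularity at $s=1/2$) at $y=0$, and decay like $\exp(-\sqrt{\lambda_k}\,y)$ at infinity.

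With this structure in hand, I would apply the one-dimensional $hp$-approximation theory on geometric meshes with linearly increasing polynomial degree (as developed for elliptic problems with corner-type singularities; see \cite{melenk02}) to each $\hat u_k$ separately. Working in the rescaled variable $\eta=\sqrt{\lambda_k}\,y$, which leaves the geometric grading of $\calG^M_{geo,\sigma}$ invariant, reduces the approximation problem to a $\lambda_k$-independent model problem for an analytic function with a $\eta^{2s}$-singularity at the origin and exponential decay at infinity. This yields $I_M\hat u_k\in S^{\bmr}_{\{\Y\}}((0,\Y),\calG^M_{geo,\sigma})$ satisfying an $y^\alpha$-weighted $H^1$-bound of the form $Ce^{-bM}\lambda_k^{-s+\nu}$ on $(0,\Y)$, with $b>0$ depending on $\sigma$, $\slope$, and $s$ but not on $k$. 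The truncation contribution $\|\hat u_k\|_{H^1(y^\alpha,(\Y,\infty))}$ decays like $\exp(-c\sqrt{\lambda_k}M)$ thanks to the exponential decay of $K_s$, and is absorbed into $e^{-bM}$ because $\Y\in[c_1M,c_2M]$.

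The candidate $u_M:=\sum_k (I_M\hat u_k)\otimes\varphi_k$ then lies in $\V^{\bmr}_M(\calC_\Y)$, and diagonalization of $\blfa{\calC}$ together with the mode-wise bounds gives
\begin{equation*}
\normC{\ue-u_M}^2 \le C e^{-2bM}\sum_{k\ge 1}\lambda_k^{-s+\nu}|f_k|^2 = Ce^{-2bM}\|f\|_{\mathbb{H}^{-s+\nu}(\Omega)}^2,
\end{equation*}
which is precisely \eqref{eq:lemma:semidiscretization-error-10}. For the constant-degree variant $\bmr=(r,\ldots,r)$ with $c_3M\le r\le c_4M$, the same construction applies since such an $r$ dominates the linear degree vector of slope $\slope$ on every element of $\calG^M_{geo,\sigma}$, at the cost of replacing $b$ by a smaller constant depending on $c_3,c_4$.

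The main obstacle, and the step that needs the most care, is securing the mode-wise $hp$-bound with \emph{explicit, uniform} control of the dependence on $\lambda_k$: both the strength of the singularity at $y=0$ and the decay rate at infinity are driven by $\lambda_k$, and a naive estimate would produce constants growing too fast in $k$ to be absorbed by $\|f\|_{\mathbb{H}^{-s+\nu}(\Omega)}$. The rescaling $\eta=\sqrt{\lambda_k}\,y$ and the resulting $\lambda_k$-independent analyticity ellipses for the renormalized Bessel profile are what makes this absorption possible, and the hypothesis $\nu>0$ is exactly the margin of regularity needed to convert the residual polynomial $\lambda_k$-factors into a summable series.
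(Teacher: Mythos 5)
Your strategy is sound, but note what the paper itself actually does here: it does not prove the lemma from first principles at all. It imports it from \cite[Lemma~{6.2}]{BMNOSS17_732} and adds only two observations: (i) the restriction $\slope\ge\slope_{\min}$ present in the cited statement can be dropped by invoking \cite[Thm.~{8}]{apel-melenk17} inside that proof, yielding the bound for every $\slope>0$ with $b=O(\slope)$; and (ii) the constant-degree case follows from Galerkin quasi-optimality plus a subspace inclusion. What you have written is, in effect, a reconstruction of the proof of the \emph{cited} result: C\'ea's lemma, the spectral diagonalization $\ue=\sum_k\hat u_k\varphi_k$ with the Bessel profile $\hat u_k(y)=c_sf_k\lambda_k^{-s}(\sqrt{\lambda_k}\,y)^sK_s(\sqrt{\lambda_k}\,y)$, mode-wise one-dimensional $hp$-approximation on the geometric mesh after the rescaling $\eta=\sqrt{\lambda_k}\,y$, and exponentially small truncation. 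That is indeed how \cite{BMNOSS17_732} proceeds, so your route is the underlying analytic argument where the paper's is a citation; both are legitimate, and yours correctly identifies where the technical weight sits.

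Three places where the sketch is thinner than the argument it stands in for. First, the rescaling does not produce a single $\lambda_k$-independent model problem: the truncation point and the coarse elements scale with $\sqrt{\lambda_k}$, so one obtains a \emph{family} of problems on intervals of length $\sqrt{\lambda_k}\,\Y$, and the first element $[0,\sqrt{\lambda_k}\,\Y\sigma^{M-1}]$ can be arbitrarily large compared to the unit length scale of $\eta^sK_s(\eta)$. The uniform-in-$k$ bound therefore requires a case distinction; for $\lambda_k$ beyond an $M$-dependent threshold one takes $I_M\hat u_k=0$ and uses stability, and it is precisely there that the factor $\lambda_k^{\nu}$ afforded by the ${\mathbb H}^{-s+\nu}$ norm is consumed. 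Your closing remark about $\nu>0$ is the right intuition, but this case split is where the proof actually lives. Second, the mode-$k$ contribution to $\normC{\cdot}^2$ is $\|\hat v_k'\|^2_{L^2(y^\alpha,(0,\infty))}+\lambda_k\|\hat v_k\|^2_{L^2(y^\alpha,(0,\infty))}$, not the plain weighted $H^1(y^\alpha)$ norm; your final display is consistent with this, but the intermediate ``$y^\alpha$-weighted $H^1$-bound'' should be stated in the $\lambda_k$-weighted form. Third, for the constant-degree variant the needed inclusion is $S^{\bmr}_{\{\Y\}}\subset S^{r}_{\{\Y\}}$, which with $r\ge c_3M$ forces $1+\lceil\slope(M-1)\rceil\le r$; if the given $\slope$ is too large one must first pass to a smaller slope $\slope'$, which is admissible exactly because the linear-degree result holds for every positive slope. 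Your phrasing (``$r$ dominates the linear degree vector'') is compatible with this, but the slope adjustment should be made explicit.
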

\begin{proof}
The statement is a slight generalization of 
\cite[Lemma 13]{BMNOSS17_732}.  
In \cite[Lemma 13]{BMNOSS17_732}, 
it is stated that the slope $\slope$ has to satisfy 
$\slope\ge \slope_{min}$ for some suitable $\slope_{min}>0$. 
Inspection of the proof shows that this condition can
be removed.  
Specifically, using 
\cite[Thm.~{8}, Eqn.~(78), Rem.~{16}]{apel-melenk17}
(or, referring alternatively to the extended preprint 
\cite[Thm.~{3.13}, Eqn.~{(3.21)}, Rem.~{3.14}]{apel-melenk17})
the result holds for any $\slope>0$, 
with constant $b = O(\slope)$ as $\slope\downarrow 0$.
The statement about the constant polynomial degree follows
from the case of the linear degree vector since 
a) $G^{\bmr}_M \ue$ is the Galerkin projection of $\ue$, 
b) the minimization property of Galerkin projections, and c)  
the fact that the space 
$S^r_{\{\Y\}}((0,\Y),\calG^M_{geo,\sigma})$ is a subspace of 
$S^{\bmr}_{\{\Y\}}((0,\Y),\calG^M_{geo,\sigma})$ provided
$\bmr$ is a linear degree vector with suitably chosen slope. 
\end{proof}
The error bound \eqref{eq:lemma:semidiscretization-error-10} 
shows that up to an exponentially small (with respect to $\Y$) 
error introduced by truncation of $(0,\infty)$ at $\Y$,
the solution $\ue$ can be approximated by the solution of
a local problem on the finite cylinder $\C_\Y$.

\subsection{Diagonalization}
\label{S:diag}
Diagonalization, as introduced in \cite{BMNOSS17_732}, refers 
to the observation that the solution $G^{\bmr}_M \ue$ of the semidiscrete 
problem (\ref{eq:semi-discrete}) 
can be expressed in terms of $\calM$ solutions $U_i\in H^1_0(\Omega)$,
of $\calM$ \emph{decoupled, linear local 2nd order reaction--diffusion problems in $\Omega$}.
As the eigenvalues $\mu_i$ in the corresponding eigenvalue problem
\eqref{eq:eigenvalue-problem} ahead govern the length scales in the
\emph{local} reaction-diffusion problems in $\Omega$ \eqref{eq:decoupled-problems}
(which, in turn, will be crucial in the mesh-design for the $hp$-FEM in $\Omega$),
it is of interest to know their asymptotic behavior.
We investigate this in Lemma~\ref{lemma:lambda} below.

Diagonalization is based on the 
explicit representation for the semidiscrete solution
$\ue_M$  obtained from the following generalized eigenvalue problem, 
introduced in \cite[Sec.~6]{BMNOSS17_732},
and proposed earlier in \cite{LynchR1964}, which reads:
find 
$(v,\mu) \in S^\bmr_{\{\Y\}}( (0,\Y), \calG^M) \setminus\{0\} \times {\mathbb R}$ 
such that 
\begin{equation}
\label{eq:eigenvalue-problem}
\forall w \in S^\bmr_{\{\Y\}}( (0,\Y), \calG^M)\;\colon\quad 
\mu \int_0^\Y y^\alpha v^\prime(y) w^\prime(y)\, \diff y = 
        \int_0^\Y y^\alpha v(y) w(y)\, \diff y \;. 
\end{equation} 
%
All eigenvalues $(\mu_i)_{i=1}^{\calM}$ of \eqref{eq:eigenvalue-problem}
are positive and $S^\bmr_{\{\Y\}}((0,\Y),\calG^M)$ 
has an orthonormal eigenbasis $(v_i)_{i=1}^\calM$ satisfying 
\begin{equation}
\label{eq:eigenbasis-normal}
\int_0^\Y y^\alpha v_i^\prime(y) v_j^\prime(y)\, \diff y = \delta_{i,j}, 
\qquad 
\int_0^\Y y^\alpha v_i(y) v_j(y)\, \diff y = \mu_i \delta_{i,j}. 
\end{equation}
We may expand the semidiscrete approximation $G^{\bmr}_M \ue$ as 
\begin{equation}\label{eq:U_M}
G^{\bmr}_M \ue(x',y)=: \sum_{i=1}^\calM U_i(x') v_i(y).
\end{equation}
The coefficient functions $U_i\in H^1_0(\Omega)$ 
satisfy a system of $\calM$ \emph{decoupled} linear reaction-diffusion 
equations in $\Omega$: 
for $i=1,\ldots,\calM$, find $U_i \in H^1_0(\Omega)$ such that
\begin{align}
\label{eq:decoupled-problems}
\forall V \in H^1_0(\Omega)\;\colon\quad 
\blfa{\mu_i,\Omega}(U_i,V) = d_s v_i(0) \langle f,V\rangle\;.
\end{align}
Here $v_i$ denotes the $i$-th eigenfunction of the eigenvalue problem 
\eqref{eq:eigenvalue-problem}, \eqref{eq:eigenbasis-normal}
and 
\begin{equation}
\label{eq:blfa-sg-per}
\blfa{\mu_i,\Omega}(U,V):= \mu_i \blfa{\Omega}(U,V) + \int_\Omega U V \diff x',
\end{equation}
with $\blfa{\Omega}$ as introduced in \eqref{eq:blfOmega}.
Due to the biorthogonality \eqref{eq:eigenbasis-normal} of 
the discrete eigenfunctions 
$v_i \in S^\bmr_{\{\Y\}}((0,\Y),\calG^M)$,
any 
$Z(x',y)= \sum_{i=1}^\calM V_i(x') v_i(y)$ with arbitrary 
$V_i\in H^1_0(\Omega)$ satisfies the energy (``Pythagoras'') identities
\begin{equation}
\label{eq:pythagoras}
\blfa{\C}(Z,Z)  = 
\blfa{\C_\Y}(Z,Z)  = 
\sum_{i=1}^\calM \|V_i\|^2_{\mu_i,\Omega}, 
\qquad \|V_i\|^2_{\mu_i,\Omega}:= \blfa{\mu_i,\Omega}(V_i,V_i). 
\end{equation}

The following bounds on the $\mu_i$ were shown in \cite[Lemma~{14}]{BMNOSS17_732}
for the special case of geometric meshes $\calG^M_{geo,\sigma}$ and linear degree
vectors: 
\begin{lemma}[properties of the eigenpairs, \protect{\cite[Lemma~{14}]{BMNOSS17_732}}]
\label{lemma:lambda}
Let $\{\calG^M_{geo,\sigma}\}_{M\geq 1}$ 
be a sequence of geometric meshes on $(0,\Y)$ and 
$\bmr$ a linear polynomial degree vector with slope $\slope>0$. 

Assume that the truncation parameter $\Y$ is chosen so that
$c_1 M\le \Y \le c_2 M$ for some constants $0<c_1, c_2 < \infty$  
that are independent of $M$.
	
Then,
there exists $C>1$ (depending on $c_1$, $c_2$ and on $\sigma\in (0,1)$) 
such that there holds for for every $M \in \bbN$ 
for the eigenpairs $(\mu_i,v_i)_{i=1}^{\calM_{geo}}$ given by 
\eqref{eq:eigenvalue-problem}, \eqref{eq:eigenbasis-normal} 
\begin{equation}\label{eq:RDevpBds}
\|v_i\|_{L^\infty(0,\Y)} \leq C M^{(1-\alpha)/2}, 
\qquad  C^{-1} (\slope^{-2} M^{-2} \sigma^{M})^2 \leq \mu_i \leq C M^2. 
\end{equation}
\end{lemma}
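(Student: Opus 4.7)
The plan is to obtain all three bounds from Rayleigh-quotient analysis of the generalized eigenvalue problem \eqref{eq:eigenvalue-problem}, namely
\[
\mu \;=\; \frac{\int_0^\Y y^\alpha v(y)^2 \, dy}{\int_0^\Y y^\alpha v'(y)^2 \, dy}\;,
\qquad v \in S^\bmr_{\{\Y\}}((0,\Y),\calG^M_{geo,\sigma})\setminus\{0\}\;.
\]
The upper bound on $\mu_i$ and the $L^\infty$-bound on $v_i$ are soft and hold on the entire trial space, while the lower bound relies essentially on the $hp$-structure of the mesh.

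\textbf{Upper bound on $\mu_i$.} I would establish a weighted Poincar\'e inequality on $(0,\Y)$ for functions vanishing at $y=\Y$: writing $v(y) = -\int_y^\Y v'(t)\,dt$ and applying Cauchy--Schwarz with the split weight $t^\alpha = t^{-\alpha}\cdot t^{2\alpha}$ (or equivalently $t^\alpha\cdot 1$), one obtains $\|v\|_{L^2(y^\alpha,(0,\Y))}\le C\Y\,\|v'\|_{L^2(y^\alpha,(0,\Y))}$ uniformly for $\alpha\in(-1,1)$. Dividing gives $\mu_i \leq C\Y^2 \leq C c_2^2 M^2$ for every $i$.

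\textbf{$L^\infty$-bound on $v_i$.} Using the normalization $\int_0^\Y y^\alpha (v_i')^2\,dy = 1$ from \eqref{eq:eigenbasis-normal}, together with $v_i(\Y)=0$, I would estimate pointwise via Cauchy--Schwarz
\[
|v_i(y)|^2 \;=\; \Bigl|\int_y^\Y v_i'(t)\,dt\Bigr|^2
\;\leq\; \Bigl(\int_0^\Y t^{-\alpha}\,dt\Bigr)\cdot \int_0^\Y t^\alpha (v_i'(t))^2\,dt
\;=\; \frac{\Y^{1-\alpha}}{1-\alpha}\;,
\]
which yields $\|v_i\|_{L^\infty(0,\Y)}\le C\Y^{(1-\alpha)/2} \le C M^{(1-\alpha)/2}$, as claimed.

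\textbf{Lower bound on $\mu_i$.} This is the main obstacle. Here one must exploit the discreteness of the trial space through a weighted polynomial inverse inequality, element by element on the geometric mesh. On each $I_k \in \calG^M_{geo,\sigma}$, polynomials $p$ of degree $r_k$ satisfy a weighted inverse estimate of the form $\|p'\|_{L^2(y^\alpha,I_k)}^2 \le C\, r_k^{c_1}/h_k^{2}\,\|p\|_{L^2(y^\alpha,I_k)}^2$ (this can be derived by pulling back to a reference interval, using the classical Markov--Bernstein inequality for polynomials, and tracking the weight $y^\alpha$ carefully near $y=0$ for the first element). Summing over $k$ and using that $h_k \sim \Y\sigma^{M-k}$, $r_k \sim 1 + \slope k$, the worst element is $I_1$ with $h_1 = \Y\sigma^{M-1}\sim M\sigma^M$ and $r_1=1$. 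The resulting global estimate on $\|v'\|^2_{L^2(y^\alpha,(0,\Y))}/\|v\|^2_{L^2(y^\alpha,(0,\Y))}$ is polynomially and exponentially large in $M$ and polynomially large in $\slope$, yielding $\mu_i \ge C^{-1}(\slope^{-2}M^{-2}\sigma^M)^2$. The delicate part is to propagate the bound uniformly across elements of very different scales and to keep track of the exact dependence on $\slope$ in the linear-degree regime \eqref{eq:lindeg}; this is precisely the bookkeeping carried out in \cite[Lemma~{14}]{BMNOSS17_732}, whose argument carries over verbatim once the inverse inequality on $I_1$ (where the weight degenerates) is established.
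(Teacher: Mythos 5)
The paper does not prove this lemma: it is quoted verbatim from \cite[Lemma~14]{BMNOSS17_732}, so there is no in-text argument to compare against. Your reconstruction is nevertheless the standard (and, as far as I can tell, the intended) one, and the first two bounds are essentially complete as written: the weighted Poincar\'e inequality $\|v\|_{L^2(y^\alpha,(0,\Y))}\le C\Y\|v'\|_{L^2(y^\alpha,(0,\Y))}$ (using $\int_0^\Y y^\alpha\bigl(\int_y^\Y t^{-\alpha}\,dt\bigr)dy\le C\Y^2$ for fixed $\alpha\in(-1,1)$) gives $\mu_i\le C\Y^2\lesssim M^2$, and the same Cauchy--Schwarz split together with the normalization $\int_0^\Y y^\alpha (v_i')^2\,dy=1$ gives $\|v_i\|_{L^\infty}\le(\Y^{1-\alpha}/(1-\alpha))^{1/2}\lesssim M^{(1-\alpha)/2}$. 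For the lower bound you correctly identify the one genuinely technical ingredient, namely a Markov-type inverse inequality in $L^2(y^\alpha)$ on the first element, where the Jacobi weight degenerates; on all other elements the weight is comparable to a constant (endpoint ratio $\sigma$), so the unweighted inverse estimate applies. One remark: the ``propagation across elements of very different scales'' that you flag as delicate is in fact trivial --- summing the elementwise estimates gives $\|v'\|^2_{L^2(y^\alpha)}\le\bigl(\max_k C r_k^4 h_k^{-2}\bigr)\|v\|^2_{L^2(y^\alpha)}$, hence $\mu_i\ge\min_k h_k^2/(Cr_k^4)\gtrsim \Y^2\sigma^{2M}/(\slope M)^4$, which is even slightly stronger than the stated $(\slope^{-2}M^{-2}\sigma^M)^2$. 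So apart from writing out the weighted inverse inequality on $I_1$, your outline closes.
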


\subsection{Fully discrete approximation} \label{sec:fuldiscr}
The full discretization is obtained by approximating the functions 
$U_i$ of (\ref{eq:decoupled-problems}) from finite-dimensional spaces. 
Let $\calT_i$, $i=1,\ldots,\calM$, be regular triangulations in $\Omega$ 
and $q \in \bbN$. 
Let $\Pi^q_i:H^1_0(\Omega) \rightarrow S^q_0(\Omega,\calT_i)$ 
denote the Ritz projectors for the bilinear forms $\blfa{\mu_i,\Omega}$, 
which are characterized by 
\begin{equation}
\label{eq:galerkin-for-vi} 
\blfa{\mu_i,\Omega}(u - \Pi^q_i u,v) = 0 \qquad \forall v \in S^q_0(\Omega,\calT_i). 
\end{equation}

In terms of the projections $\Pi^q_i$ we can define the fully discrete approximation 
\begin{equation}
\label{eq:fully-discrete-1}
\ue_{h,M}(x,y):= \sum_{i=1}^{\calM} v_i(y) \Pi^q_i U_i(x). 
\end{equation}
By combining 
(\ref{eq:decoupled-problems}) and (\ref{eq:galerkin-for-vi}),
the functions $\Pi^q_i U_i \in S^q_0(\Omega, \calT_i)$ 
are explicitly and computably given as the solutions of 
\begin{equation}
\label{eq:PiqiUi}
\forall V \in S^q_0(\Omega,\calT_i)\ \colon\ 
a_{\mu_i,\Omega} (\Pi^q_i U_i, V) = d_s v_i(0) \langle f, V\rangle. 
\end{equation}
In view of (\ref{eq:pythagoras}), we have the following representation of the difference
between the semidiscrete approximation $G^\bmr_{M} \ue$ and the fully discrete approximation 
$\ue_{h,M}$:%
\begin{lemma}
\label{lemma:diagonalization-error}
Let $\ue_{h,M}$ be given by (\ref{eq:fully-discrete-1}). Then: 
\begin{equation}
\label{eq:diagonalization-error}
a_\C(G^\bmr_M \ue - \ue_{h,M}, G^\bmr_M \ue - \ue_{h,M}) 
 = \sum_{i=1}^{\calM} \|U_i - \Pi^q_i U_i\|^2_{\mu_i,\Omega}. 
\end{equation}
\end{lemma}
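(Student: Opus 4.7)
The proof will be essentially an immediate application of the biorthogonality Pythagoras identity \eqref{eq:pythagoras} once the difference $G^\bmr_M \ue - \ue_{h,M}$ is put into the correct tensor-product form. The plan is as follows.

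First, I would expand both functions in the discrete eigenbasis $\{v_i\}_{i=1}^{\calM}$ of the generalized eigenvalue problem \eqref{eq:eigenvalue-problem}. The semidiscrete Galerkin projection admits, by \eqref{eq:U_M}, the expansion
\begin{equation*}
G^\bmr_M \ue(x',y) = \sum_{i=1}^{\calM} U_i(x') v_i(y),
\end{equation*}
while the definition \eqref{eq:fully-discrete-1} gives
\begin{equation*}
\ue_{h,M}(x',y) = \sum_{i=1}^{\calM} (\Pi^q_i U_i)(x') v_i(y).
\end{equation*}
Subtracting these two expansions, and using the linearity of the sum, yields
\begin{equation*}
G^\bmr_M \ue - \ue_{h,M} = \sum_{i=1}^{\calM} \bigl(U_i - \Pi^q_i U_i\bigr)(x')\, v_i(y).
\end{equation*}

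Second, I would observe that for each $i$, the coefficient function $U_i - \Pi^q_i U_i$ lies in $H^1_0(\Omega)$, since both $U_i$ (by \eqref{eq:decoupled-problems}) and its Ritz projection $\Pi^q_i U_i \in S^q_0(\Omega,\calT_i) \subset H^1_0(\Omega)$ (by \eqref{eq:galerkin-for-vi}) belong to this space. Hence $G^\bmr_M \ue - \ue_{h,M}$ is precisely of the form $Z(x',y) = \sum_{i=1}^\calM V_i(x') v_i(y)$ to which the Pythagoras identity \eqref{eq:pythagoras} applies, with $V_i := U_i - \Pi^q_i U_i$.

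Third, applying \eqref{eq:pythagoras} directly gives
\begin{equation*}
\blfa{\C}\bigl(G^\bmr_M \ue - \ue_{h,M},\, G^\bmr_M \ue - \ue_{h,M}\bigr) = \sum_{i=1}^{\calM} \|U_i - \Pi^q_i U_i\|^2_{\mu_i,\Omega},
\end{equation*}
which is the asserted identity. There is essentially no obstacle in this argument: the entire content has been pushed into the biorthogonality relations \eqref{eq:eigenbasis-normal} and the resulting Pythagoras identity \eqref{eq:pythagoras}, which have been established earlier. The only minor point worth remarking on in the write-up is that one uses the same eigenbasis $\{v_i\}$ in the expansions of both $G^\bmr_M \ue$ and $\ue_{h,M}$; this is automatic because $\Pi^q_i$ acts only on the $x'$-variable, so the $v_i(y)$ factors are preserved.
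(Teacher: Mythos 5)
Your proof is correct and follows exactly the route the paper intends: the paper introduces the lemma with the phrase ``In view of (\ref{eq:pythagoras})'' and gives no further argument, so the content is precisely your observation that $G^\bmr_M \ue - \ue_{h,M} = \sum_{i=1}^{\calM} (U_i - \Pi^q_i U_i)\, v_i$ with coefficients in $H^1_0(\Omega)$, to which the Pythagoras identity applies.
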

Concerning the meshes $\calT_i$, we distinguish two cases in this work: 
\begin{itemize}
\item[
{\bf Case A}:] 
The meshes $\calT_i$, $i=1,\ldots,\calM$, possibly differ from each other. 
\item[{\bf Case B}:] The meshes $\calT_i$, $i=1,\ldots,\calM$, coincide. 
That is, all coefficient functions $U_i$ in the semidiscrete solution 
\eqref{eq:U_M}
are approximated from one common $hp$-FE space $S^q_0(\Omega,\calT)$. 
\end{itemize}
In {\bf Case B} the 
approximation $\ue_{h,M}$ actually coincides with the Galerkin projection 
$G^{q,\bmr}_{h,M} \ue \in \V^{q,\bmr}_{h,M}(\calT,\calG^M) 
 = 
 S^q_0(\Omega,\calT) \otimes S^{\bmr}_{\{\Y\}}((0,\Y),\calG^M)$: 
\begin{lemma}[error representation, \protect{\cite[Lemma~{12}]{BMNOSS17_732}}]
\label{lemma:decoupled-problems}
Let $(\mu_i,v_i)_{i=1}^\calM$ be the eigenpairs given by 
\eqref{eq:eigenvalue-problem}, \eqref{eq:eigenbasis-normal}. 
For $i=1,\ldots,\calM$, 
let $U_i \in H^1_0(\Omega)$ be the solutions to \eqref{eq:decoupled-problems}.
Consider {\bf Case B} and 
let $\Pi^q_i:H^1_0(\Omega) \rightarrow S^q_0(\Omega,\calT)$ 
be the Galerkin projections given as in \eqref{eq:galerkin-for-vi},
with one common, regular triangulation $\calT$ of $\Omega$ for $i=1,\ldots,\calM$.
Let $G^{\bmr}_M \ue$ denote the solution to the semidiscrete problem 
\eqref{eq:semi-discrete}. 
Then the tensor product Galerkin approximation 
$G^{q,\bmr}_{h,M} \ue \in \V^{q,\bmr}_{h,M}(\calT,\calG^M) = S^q_0(\Omega,\calT) \otimes S^{\bmr}_{\{\Y\}}((0,\Y),\calG^M)$ 
satisfies 
\begin{align}
\label{eq:representation-by-reaction-diffusion-problems}
\ue_{h,M}(x',y) & = \sum_{i=1}^{\calM} v_i(y) \Pi^q_i U_i(x') , 
\\
\label{eq:error-representation}
\blfa{\C}(G^{\bmr}_{M}\ue - G^{q,\bmr}_{h,M} \ue, G^{\bmr}_{M} \ue - G^{q,\bmr}_{h,M} \ue) 
&= 
\sum_{i=1}^{\calM} \|U_i - \Pi^q_i U_i\|^2_{\mu_i,\Omega}. 
\end{align}
\end{lemma}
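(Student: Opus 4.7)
The plan is to verify the identity $\ue_{h,M} = G^{q,\bmr}_{h,M}\ue$ by showing that the proposed $\ue_{h,M}$, defined through \eqref{eq:fully-discrete-1} with a common mesh $\calT$, lies in the tensor product space $\V^{q,\bmr}_{h,M}(\calT,\calG^M)$ and satisfies the Galerkin orthogonality that uniquely characterizes $G^{q,\bmr}_{h,M}\ue$. Since $\{v_i\}_{i=1}^\calM$ is a basis of $S^\bmr_{\{\Y\}}((0,\Y),\calG^M)$ and in \textbf{Case B} all $\Pi^q_i U_i$ belong to the single space $S^q_0(\Omega,\calT)$, the function $\ue_{h,M}$ evidently lies in $\V^{q,\bmr}_{h,M}(\calT,\calG^M)$. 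It then suffices to verify that
\begin{equation*}
  \blfa{\C}(\ue_{h,M}, V\, v_j) = d_s\, \langle f, \tr(V\,v_j)\rangle
  \qquad \forall\, V \in S^q_0(\Omega,\calT),\ j = 1,\dots,\calM,
\end{equation*}
since these pure tensor products span $\V^{q,\bmr}_{h,M}(\calT,\calG^M)$.

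The core computation exploits the block-diagonal structure ${\boldsymbol{\mathfrak A}} = \mathrm{diag}\{A,1\}$: for $U\in H^1_0(\Omega)$, $v\in S^\bmr_{\{\Y\}}$ and test functions $V$, $w$,
\begin{equation*}
  \blfa{\C}(U v, V w)
  = \blfa{\Omega}(U,V)\int_0^\Y y^\alpha v(y) w(y)\,\diff y
  + (U,V)_{L^2(\Omega)} \int_0^\Y y^\alpha v'(y) w'(y)\,\diff y.
\end{equation*}
Inserting $\ue_{h,M}=\sum_i (\Pi^q_i U_i) v_i$ and $w=v_j$ and applying the biorthogonality relations \eqref{eq:eigenbasis-normal} collapses the double sum into a single term indexed by $j$, yielding
\begin{equation*}
  \blfa{\C}(\ue_{h,M}, V\,v_j) = \mu_j\, \blfa{\Omega}(\Pi^q_j U_j, V) + (\Pi^q_j U_j, V)_{L^2(\Omega)}
   = a_{\mu_j,\Omega}(\Pi^q_j U_j, V).
\end{equation*}
By the definition \eqref{eq:PiqiUi} of $\Pi^q_j U_j$, this equals $d_s v_j(0) \langle f, V\rangle = d_s \langle f, \tr(V v_j)\rangle$, since $\tr(V v_j)(x') = v_j(0) V(x')$. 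Hence $\ue_{h,M}$ satisfies the Galerkin equations \eqref{eq:fully-discrete}, and uniqueness of the Galerkin solution gives $\ue_{h,M} = G^{q,\bmr}_{h,M}\ue$, establishing \eqref{eq:representation-by-reaction-diffusion-problems}.

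For the error representation \eqref{eq:error-representation}, subtract the expansion \eqref{eq:U_M} of $G^\bmr_M\ue$ from the just-established representation of $G^{q,\bmr}_{h,M}\ue$ to obtain
\begin{equation*}
  G^\bmr_M \ue - G^{q,\bmr}_{h,M}\ue = \sum_{i=1}^{\calM} (U_i - \Pi^q_i U_i)(x')\, v_i(y),
\end{equation*}
which is again a sum of tensor products with orthogonal $y$-factors. A direct application of the Pythagoras identity \eqref{eq:pythagoras} to this sum with $V_i := U_i - \Pi^q_i U_i \in H^1_0(\Omega)$ yields \eqref{eq:error-representation}. The only non-routine step is the decoupling computation in the middle paragraph, but this is immediate from the tensor product form of ${\boldsymbol{\mathfrak A}}$ combined with the two biorthogonality relations in \eqref{eq:eigenbasis-normal}; no further technicalities or regularity arguments are required.
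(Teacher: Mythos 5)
Your proof is correct and is precisely the standard argument behind this result (the paper itself gives no proof, citing \cite[Lemma~12]{BMNOSS17_732}): verify the Galerkin equations \eqref{eq:fully-discrete} on the spanning set of pure tensors $V v_j$ using the block structure of ${\boldsymbol{\mathfrak A}}$ and the biorthogonality \eqref{eq:eigenbasis-normal}, identify $\ue_{h,M}$ with $G^{q,\bmr}_{h,M}\ue$ by uniqueness, and then apply the Pythagoras identity \eqref{eq:pythagoras} to the difference $\sum_i (U_i-\Pi^q_i U_i)v_i$. No gaps.
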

%
Lemma~\ref{lemma:decoupled-problems} shows that in {\bf Case B}, the Galerkin projection of $\ue$ 
into the tensor product space $\V^{q,\bmr}_{h,M}(\calT,\calG^M)$ 
coincides with the approximation $\ue_{h,M}$ defined in (\ref{eq:fully-discrete-1}) in terms of the decoupling
procedure. Hence, the decoupling
procedure is not essential for numerical purposes in {\bf Case B}, although it has algorithmic
advantages. In contrast, {\bf Case A} relies on the decoupling in an essential way. In both cases, 
the exponenial convergence result below will make use of the error estimates 
of Lemmas~\ref{lemma:diagonalization-error}, \ref{lemma:decoupled-problems} obtained by the 
diagonalization process. 

It is advisable to choose the spaces $\calT_i$ in case {\bf Case A} such that the functions 
$U_i$ can be approximated well from $S^q_0(\Omega, \calT_i)$ in the norm $\|\cdot\|_{\mu_i,\Omega}$. 
Correspondingly in {\bf Case B}, the commmon space $\calT$ should be chosen such that 
each $U_i$ can be approximated well from $S^q_0(\Omega,\calT)$. The bounds \eqref{eq:RDevpBds} 
indicate that, for large $M$, most of the reaction-diffusion problems \eqref{eq:decoupled-problems} 
are singularly perturbed.
Hence we design in the following Section~\ref{S:approx-sing-perturb}
$hp$-FE approximation spaces in $\Omega$ 
which afford exponential convergence rates that are \emph{robust} with respect to
the singular perturbation parameter. 
\section{$hp$-FE Approximation of singular perturbation problems}
\label{S:approx-sing-perturb}
In the exponential convergence rate analysis of tensorized 
$hp$-FEM for the CS extension (\EhpFEMp) 
as well as for the ensuing (see Section~\ref{S:ExpConvIII} ahead)
\BK approximation, 
a crucial role is played by \emph{robust exponential convergence 
rate bounds for $hp$-FEM for singularly perturbed, reaction-diffusion 
problems} in curvilinear polygonal domains $\Omega$.
Specifically, we consider the $hp$-FE approximation of 
the local reaction-diffusion problem in $\Omega$,
\begin{equation} \label{eq:sg-per}
-\varepsilon^2 \operatorname{div} \left( A(x') \nabla u^\varepsilon\right)
+ c(x') u^\varepsilon 
= f \quad \mbox{ in $\Omega$}, 
\qquad u^\varepsilon = 0 \quad\mbox{ on $\partial\Omega$}, 
\end{equation}
where we assume  $\operatorname*{ess\,inf}_{x'\in \Omega} c(x') \geq c_0 > 0$ and
\begin{equation}\label{eq:sg-per-asmp}
  \begin{aligned}    
&\text{ $A$, $c$, and $f$ are analytic on $\overline{\Omega}$ and}\\
&\text{$A$ is symmetric, uniformly positive definite.}
  \end{aligned}
\end{equation}
We note again that \eqref{eq:sg-per} does not imply 
any kind of boundary compatibility of $f$ at $\partial \Omega$
(cf.\ Remark~\ref{remk:compatibility}). 
We assume $\Omega$ to be scaled so that ${\rm diam}(\Omega) = O(1)$.
Then, for small $\varepsilon >0$, the boundary value problem \eqref{eq:sg-per} 
is a so-called ``elliptic-elliptic'' singular perturbation problem. 
Under the assumptions \eqref{eq:sg-per-asmp}, for every $\varepsilon>0$
problem \eqref{eq:sg-per} admits a unique solution $u^\varepsilon\in H^1_0(\Omega)$.
In general, $u^\varepsilon$ exhibits, for small $\varepsilon > 0$, 
\emph{boundary layers near $\partial\Omega$} 
whose robust numerical resolution (i.e., with error bounds whose constants are
independent of $\varepsilon$)
requires \emph{anisotropically refined meshes aligned with $\partial \Omega$}
(see \cite{RoosStynsTobiska2ndEd,melenk-schwab98,FstmnMM_hpBalNrm2017} 
 and the references there).
In addition, the corners of $\Omega$ induce point singularities 
in the (analytic in $\Omega$) solution $u^\varepsilon$. 
In the context of $hp$-FEM under consideration here,
their efficient numerical approximation mandates 
\emph{geometric mesh refinement near the corners}. 

In the present section, we consider the $hp$-FEM approximation of 
$u^\varepsilon$ that features exponential convergence for two different
types of meshes: 
a) \emph{geometric boundary layer meshes} in 
Section~\ref{sec:sing-approx-geo-mesh} and 
b) \emph{admissible boundary layer meshes}  in Section~\ref{sec:sing-approx-admissible-meshes}. 
In both cases the error estimates are of the form $O(e^{-b q} + e^{-b' L})$, 
with the constant hidden in $O(\cdot)$ independent of $\eps$, $L$, and $q$ and where 
$q$ is the polynomial degree employed and $L$ measures the number of layers
of geometric refinement towards the vertices or edges of $\Omega$. 
The difference in these two types of meshes is that ``admissible boundary layer meshes'' 
are strongly $\varepsilon$-dependent with geometric refinement towards the vertices
and only a \emph{single} layer of thin elements of width $O(q\varepsilon)$ 
near $\partial\Omega$ to resolve the boundary layer. 
The number of elements is then $O(L)$ leading to a number of degrees of freedom 
$N = O(L q^2)$. 
In contrast, {geometric boundary layer meshes} are based
on geometric, anisotropic refinement towards the edges and corners
of $\Omega$. As we show in \cite{banjai-melenk-schwab19-RD}, 
$hp$-FEM on such meshes afford exponential convergence for boundary layers with 
multiple scales.
The total number of elements in geometric boundary layer 
meshes with $L$ layers is $O(L^2)$.
Combined with local FE spaces of polynomial degree $q$,
this results in a number of degrees of freedom $N = O(L^2 q^2)$.  
Whereas \emph{admissible boundary layer meshes} are designed to approximate 
boundary layers of a single, given length scale $\eps$, 
geometric boundary layer meshes afford concurrent, robust and
exponentially convergent approximations of boundary layers with multiple 
length scales in $\Omega$. 
These arise, e.g.,  upon semidiscretization in the extended variable 
as is evident from \eqref{eq:decoupled-problems}.

\subsection{Macro triangulation. Geometric boundary layer mesh}
\label{sec:geo-bdy-layer-mesh}
\begin{figure}[th]
\psfragscanon
\psfrag{T}{$T$}
\psfrag{B}{$B$}
\psfrag{M}{$M$}
\psfrag{C}{$C$}
\begin{overpic}[width=0.25\textwidth]{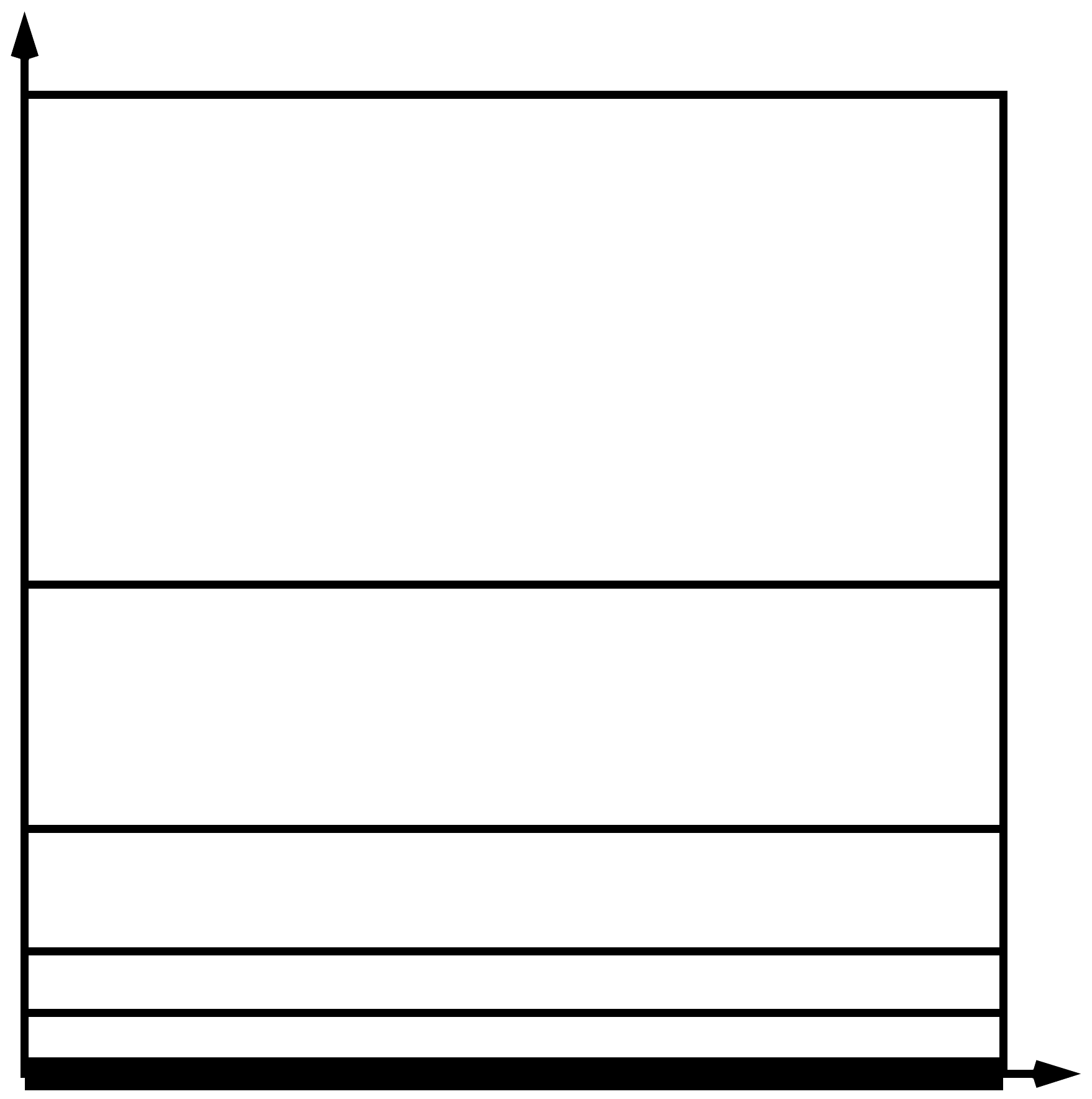}
\put(45,70){$ \check{\calT}^{\BL,L}_{geo,\sigma}$}
\put(100,-5){\small $\xh$}
\put(-7,95){\small $\yh$}
\end{overpic}
\hfill 
\begin{overpic}[width=0.25\textwidth]{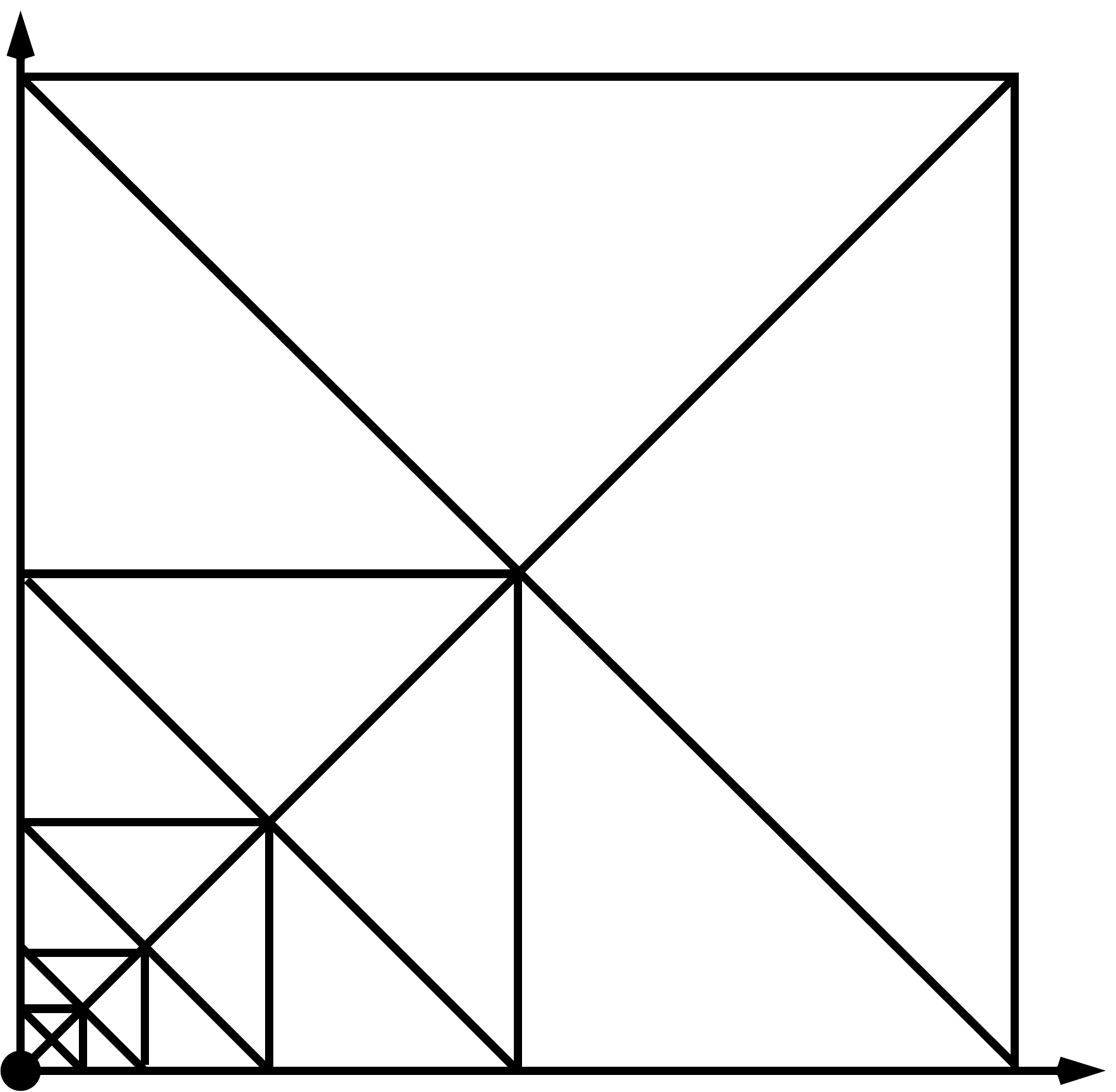}
\put(45,70){$ \check{\calT}^{\Co,n}_{geo,\sigma}$}
\put(100,-5){\small $\xh$}
\put(-7,95){\small $\yh$}
\end{overpic}
\hfill 
\begin{overpic}[width=0.25\textwidth]{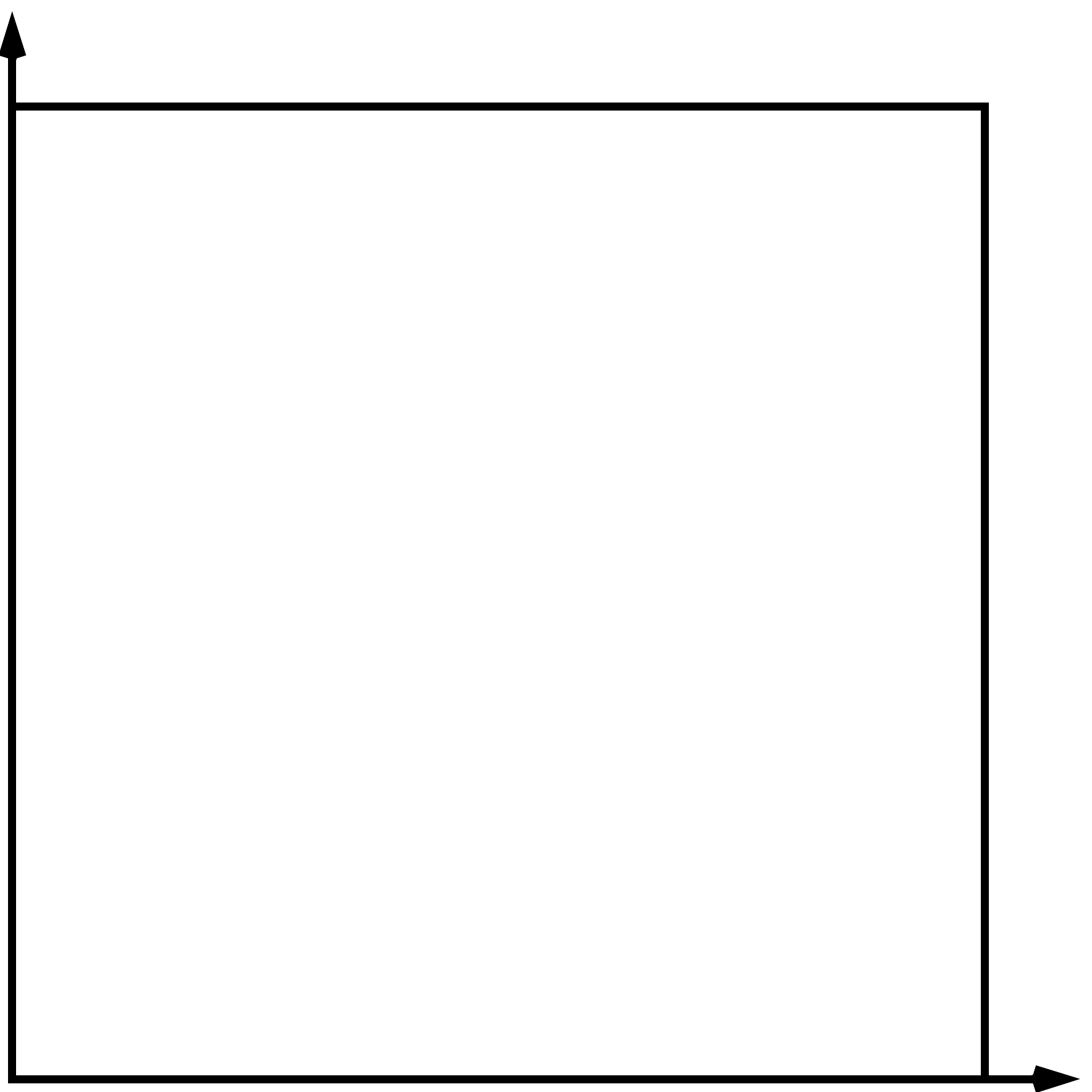}
\put(25,70){trivial patch}
\put(100,-5){\small $\xh$}
\put(-7,95){\small $\yh$}
\end{overpic}
\begin{overpic}[width=0.45\textwidth]{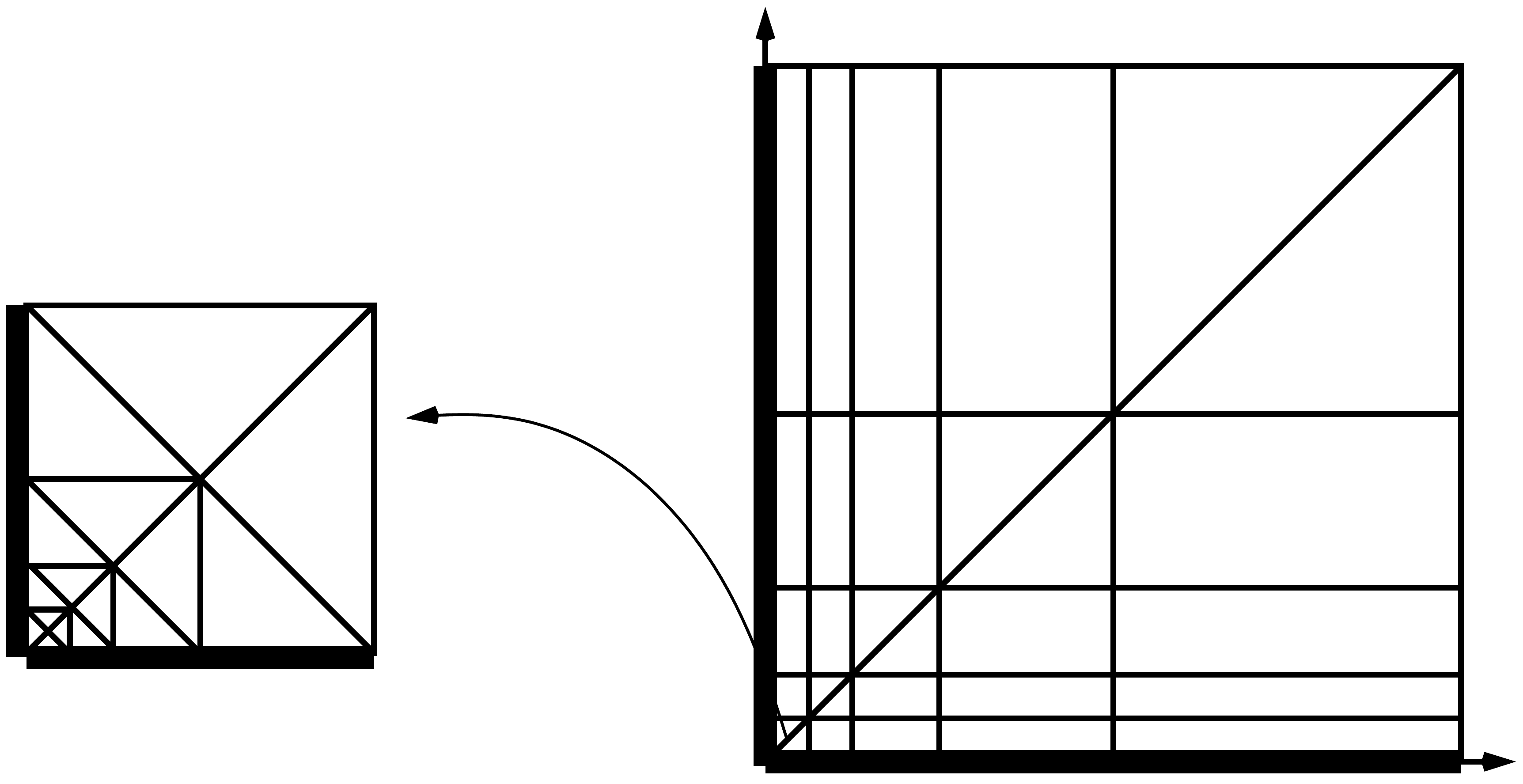}
\put(70,40){$ \check{\calT}^{\Te,L,n}_{geo,\sigma}$}
\put(0,0){$ \widehat S_1 = (0,\sigma^L)^2$}
\put(100,-5){\small $\xh$}
\put(45,45){\small $\yh$}
\end{overpic}
\hfill 
\begin{overpic}[width=0.45\textwidth]{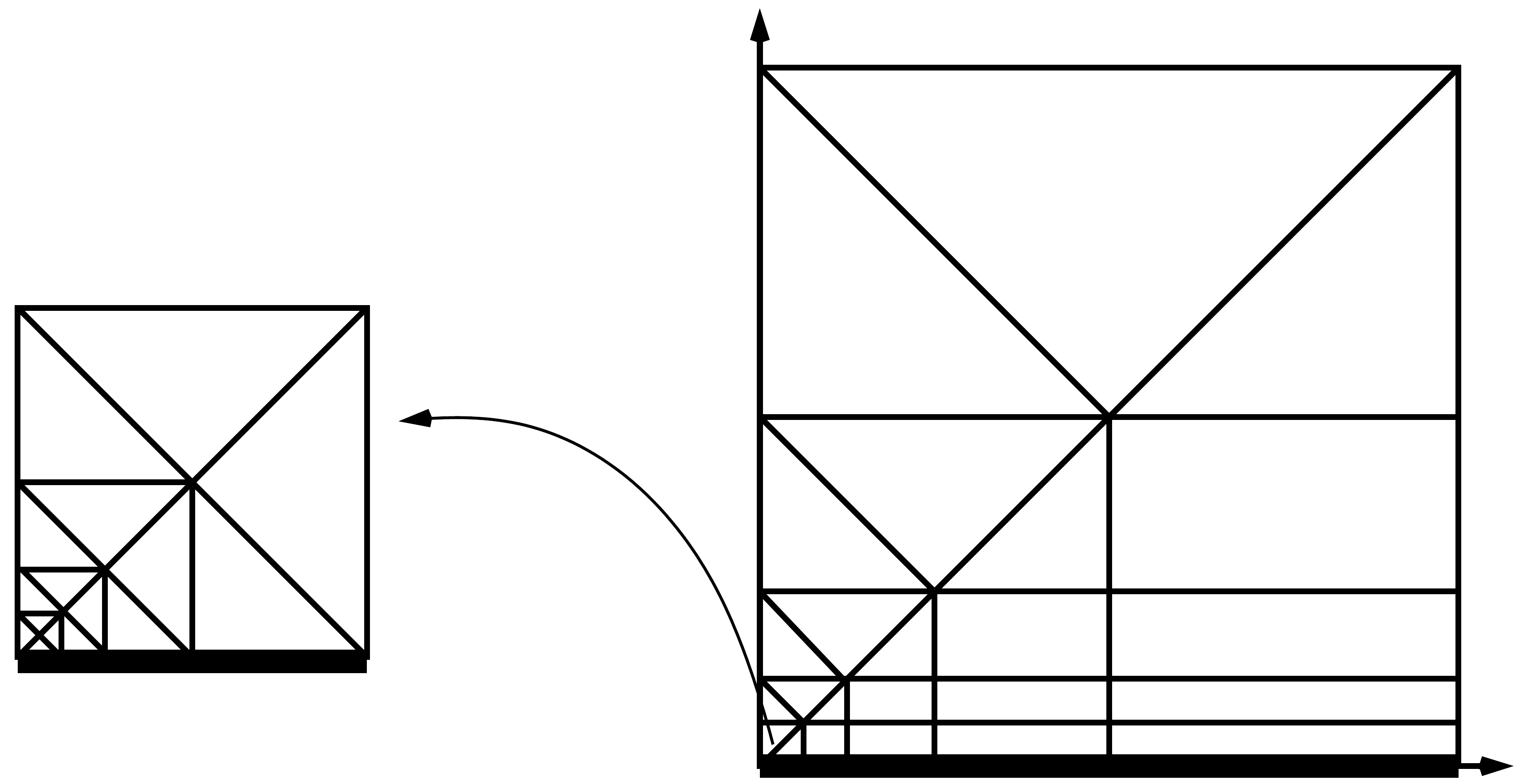}
\put(70,40){$ \check{\calT}^{\Mi,L,n}_{geo,\sigma}$}
\put(0,0){$ \widehat S_1 = (0,\sigma^L)^2$}
\put(100,-5){\small $\xh$}
\put(45,45){\small $\yh$}
\end{overpic}
\psfragscanoff
\caption{
\label{fig:patches} 
Catalog ${\mathfrak P}$ of reference refinement patterns from 
\cite{banjai-melenk-schwab19-RD}.
        Top row: reference boundary layer patch $\check{\calT}^{\BL,L}_{geo,\sigma}$
with $L$ layers of geometric refinement towards $\{\yh=0\}$;
        reference corner patch $\check{\calT}^{\Co,n}_{geo,\sigma}$
with $n$ layers of geometric refinement towards $(0,0)$; trivial patch.
Bottom row:
        reference tensor patch $\check{\calT}^{\Te,L,n}_{geo,\sigma}$
with $n$ layers of refinement towards $(0,0)$ and $L$ layers of refinement towards $\{\xh = 0\}$ and $\{\yh=0\}$;
        reference mixed patch $\check{\calT}^{\Mi,L,n}_{geo,\sigma}$
with $L$ layers of refinement towards $\{\yh=0\}$ and $n$ layers of refinement towards $(0,0)$.
Geometric entities shown in boldface indicate parts of $\partial \widehat S$
that are mapped to $\partial\Omega$.
These patch meshes are transported into the curvilinear polygon $\Omega$ shown
in Fig.~\ref{fig:curvilinear-polygon} via analytic patch maps $F_{K^{\M}}$.
}%
\end{figure}
\begin{figure}[th]
\begin{overpic}[width=0.25\textwidth]{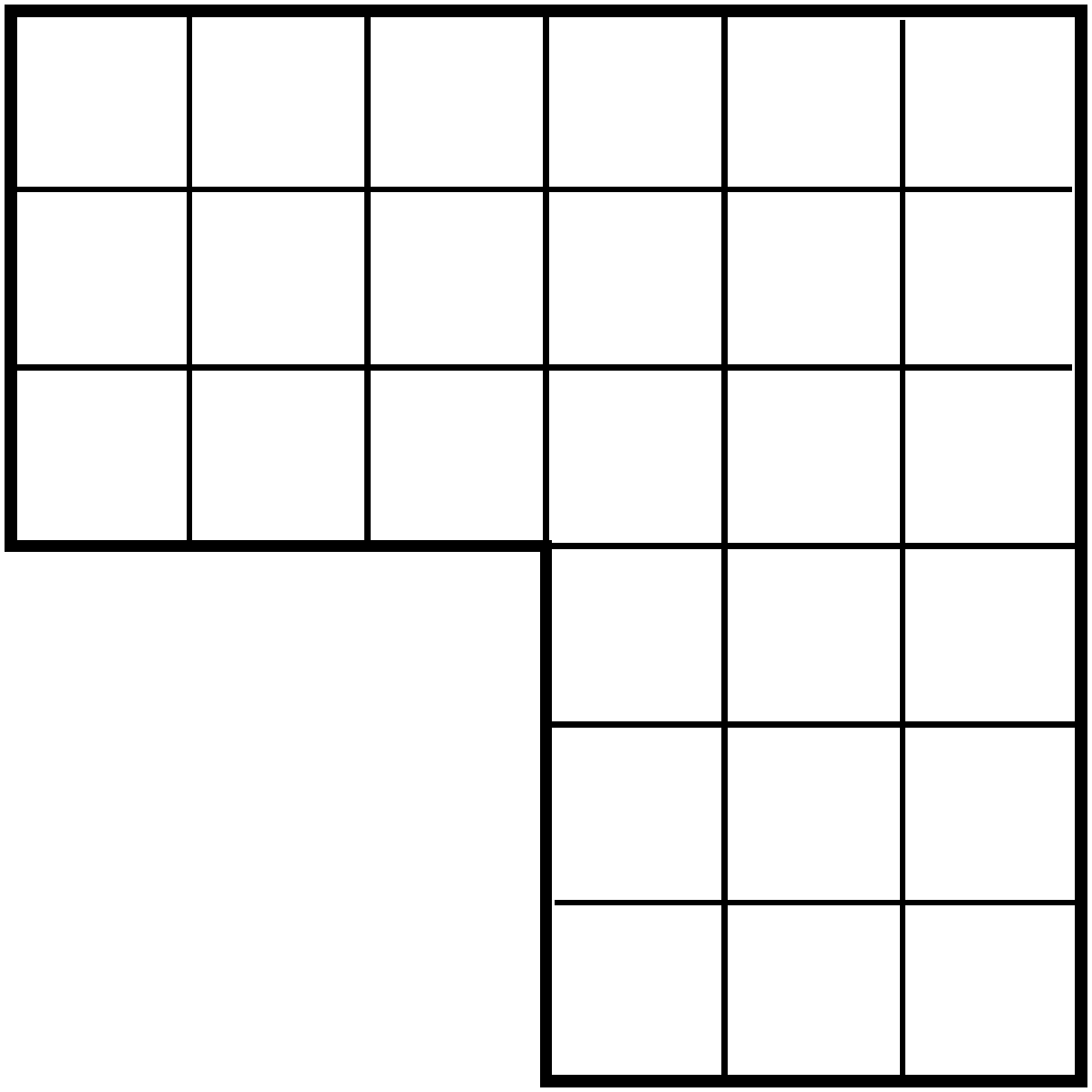}
\put(8,55){T}
\put(25,55){B}
\put(40,55){M}
\put(55,55){C}
\put(55,40){M}
\put(55,22){B}
\put(55,08){T}
\put(75,08){B}
\put(90,08){T}
\put(90,25){B}
\put(90,40){B}
\put(90,55){B}
\put(90,75){B}
\put(90,90){T}
\put(75,90){B}
\put(55,90){B}
\put(40,90){B}
\put(25,90){B}
\put(08,90){T}
\put(08,75){B}
\end{overpic}
\hfill 
\begin{overpic}[width=0.5\textwidth]{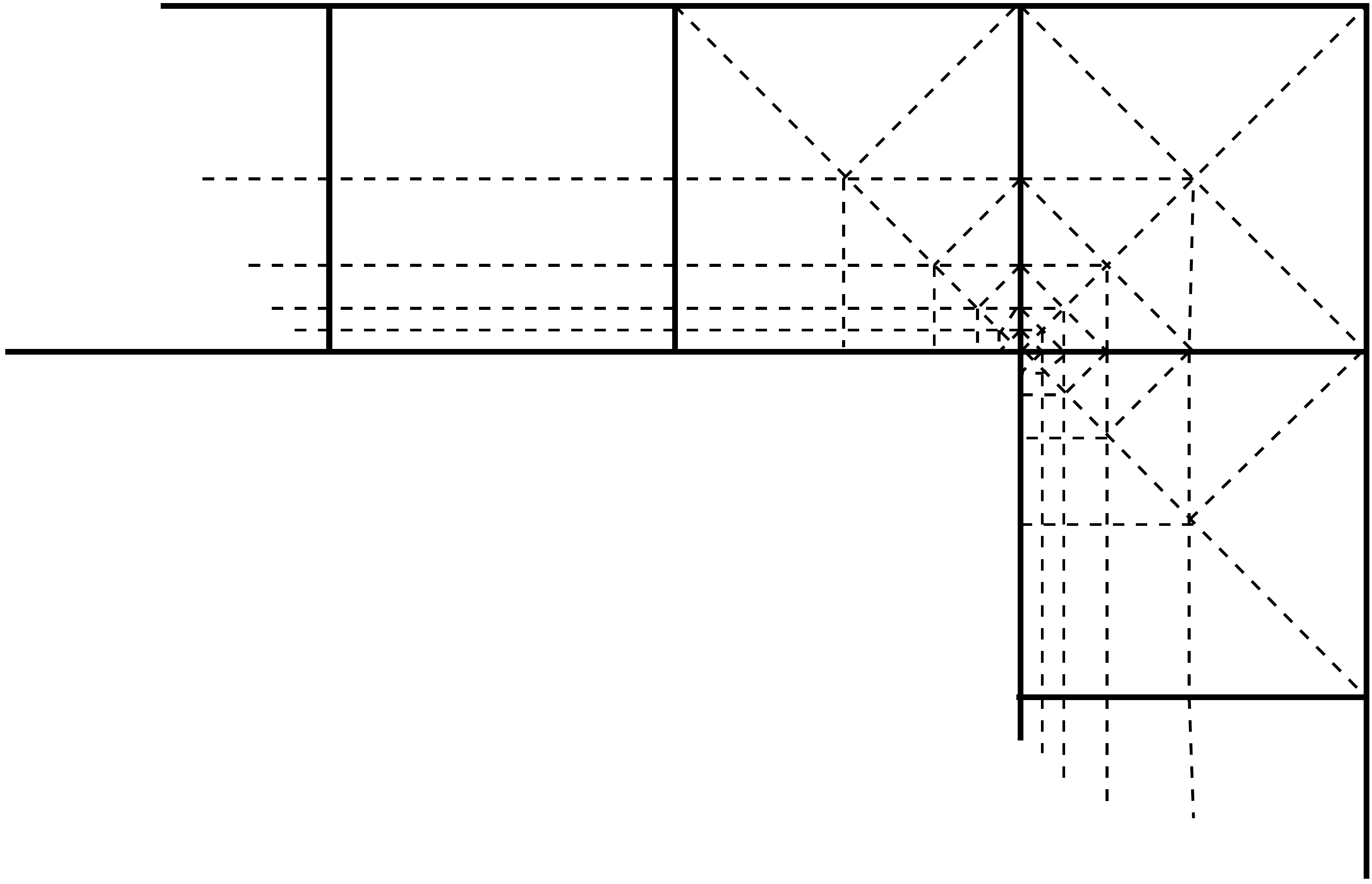}
\put(15,55){$\Omega$}
\put(35,50){B}
\put(55,50){M}
\put(85,50){C}
\put(95,25){M}
\put(40,30){$\partial\Omega$}
\put(65,30){$\bA_j$}
\end{overpic}

\caption{
\label{fig:patch-examples} 
Patch arrangement in $\Omega$ \cite{banjai-melenk-schwab19-RD}.
Left panel: example of L-shaped domain decomposed into 27 patches 
($T$, $B$, $M$, $C$ indicate Tensor, Boundary layer, Mixed, Corner patches,
empty squares stand for trivial patches). 
Right panel: Zoom-in near the reentrant corner $\bA_j$. 
Solid lines indicate patch boundaries, dashed lines mesh lines.
}
\end{figure}
We do not consider the most general meshes with anisotropic
refinement, but confine the $hp$-FE approximation theory 
to meshes generated as push-forwards of a small number of so-called
\emph{mesh patches}. 
This concept was used in the error analysis of 
$hp$-FEM for singular perturbations in \cite[Sec.~{3.3.3}]{melenk02} 
and in \cite{FstmnMM_hpBalNrm2017}. 
Specifically, we assume given a \emph{fixed macro-triangulation}
${\mathcal T}^{\M} = \{K^{\M} \,|\, K^{\M} \in {\mathcal T}^{\M}\}$
of $\Omega$ consisting of curvilinear quadrilaterals $K^{\M}$
with analytic \emph{patch maps} (to be distinguished from the
actual element maps) 
$F_{K^{\M}}:\widehat S=(0,1)^2 \rightarrow K^{\M}$
that satisfy the usual compatibility conditions.
I.e., ${\mathcal T}^{\M}$ does not have hanging nodes and, 
for any two distinct elements $K_1^{\M}, K_2^{\M} \in {\mathcal T}^{\M}$ that
share an edge $e$, their respective element maps induce compatible parametrizations
of $e$ (cf., e.g.,  \cite[Def.~{2.4.1}]{melenk02} for the precise conditions).
Each element of the fixed macro-triangulation ${\mathcal T}^{\M}$
is further subdivided according to one of the 
refinement patterns in Definition~\ref{def:admissible-patterns}
(see also \cite[Sec.~{3.3.3}]{melenk02} or \cite{FstmnMM_hpBalNrm2017}). 
The actual triangulation is then obtained by transplanting refinement patterns 
on the square reference patch into the physical domain $\Omega$
by means of the element maps $F_{K^{\M}}$ of the macro-triangulation. 
That is, 
for any element $K\in {\mathcal T}$, the element map $F_K$ is the 
concatenation of an affine map---which realizes the mapping from the 
reference square or triangle to the elements 
in the patch refinement pattern and will be denoted by $A_K$---
and the patch map (which will be denoted by $F_{K^\M}$), 
i.e., $F_K = F_{K^\M} \circ A_K : \hat K \rightarrow K$. 

The following refinement patterns 
were introduced in \cite[Def.~{2.1}, 2.3]{banjai-melenk-schwab19-RD}.
They are based on geometric refinement towards a vertex and/or an edge; 
the integer $L$ controls
the number of layers of refinement towards an edge whereas 
$ n \in \bbN$ measures the refinement towards a vertex. 
\begin{definition}[Catalog ${\mathfrak P}$ of refinement patterns,
\protect{\cite[Def.~{2.1}]{banjai-melenk-schwab19-RD}}]
\label{def:admissible-patterns}
Given $\sigma \in (0,1)$, $L$, $n \in {\mathbb N}_0$ with $n \ge L$
the catalog ${\mathfrak P}$ consists of the following patterns:
\begin{enumerate}
\item
The \emph{trivial patch}: The reference square $\widehat S = (0,1)^2$ is not further refined. 
The corresponding triangulation of $\widehat S$ consists of the single element: 
$\check{\mathcal T}^{trivial} = \{\widehat S\}$. 
\item
The \emph{geometric boundary layer patch $\check{\calT}^{\BL,L}_{geo,\sigma}$}: 
$\widehat S$ is refined anisotropically towards 
$\{\yh =0\}$ into $L$ elements as depicted
in Fig.~\ref{fig:patches} (top left). 
The mesh 
$\check{\calT}^{\BL,L}_{geo,\sigma}$ 
is characterized by the nodes $(0,0)$, $(0,\sigma^i)$, $(1,\sigma^i)$,
$i=0,\ldots,L$ and the corresponding rectangular elements generated by these nodes.
\item
The \emph{geometric corner patch $\check{\calT}^{\Co,n}_{geo,\sigma}$}: 
$\widehat S $ is refined isotropically towards $(0,0)$ as depicted 
in Fig.~\ref{fig:patches} (top middle). 
Specifically,
the reference geometric corner patch mesh $\check{\calT}^{C,n}_{geo,\sigma}$
in $\widehat S$ with geometric refinement towards $(0,0)$ and $n$ layers
is given by triangles determined by the nodes
$(0,0)$, and $(0,\sigma^i)$, $(\sigma^i,0)$,
$(\sigma^i,\sigma^i)$, $i=0,1,\ldots,n$. 
\item
The \emph{tensor product patch $\check{\calT}^{\Te,L,n}_{geo,\sigma}$}: 
$\widehat S$ is triangulated in $\widehat S_1:= (0,\sigma^L)^2$ and 
$\widehat S_2:= \widehat S \setminus \widehat S_1$ separately as 
depicted in Fig.~\ref{fig:patches} (bottom left). The triangulation of $\widehat S_1$
is a scaled version of $\check{\calT}^{\Co,n-L}_{geo,\sigma}$ characterized by the 
nodes $(0,0)$, $(0,\sigma^i)$, $(\sigma^i,0)$, $(\sigma^i,\sigma^i)$, $i=L,\ldots,n$. 
The triangulation of 
$\widehat S_2$ is characterized by the nodes 
$(\sigma^i,\sigma^j)$, $i$, $j=0,\ldots,L$. 
\item
The \emph{mixed patches $\check{\calT}^{\Mi,L,n}_{geo,\sigma}$}:
The triangulation consists of both anisotropic elements and isotropic elements
as depicted in Fig.~\ref{fig:patches} (bottom right) and is obtained by triangulating
the regions 
$\widehat S_1:= (0,\sigma^L)^2$, 
$\widehat S_2:= \bigl( \widehat S \setminus \widehat S_1\bigr)  
\cap \{\yh \leq \xh\}$, 
$\widehat S_3:= \widehat S \setminus \bigl (\widehat S_1 \cup \widehat S_2\bigr)$
separately.  
$\widehat S_1$ is a scaled version of $\check{\calT}^{\Co,n-L}_{geo,\sigma}$ 
characterized by the nodes 
$(0,0)$, $(0,\sigma^i)$, $(\sigma^i,0)$, $(\sigma^i, \sigma^i)$, $i=L,\ldots,n$. 
The triangulation of $\widehat S_2$ is given by the nodes 
$(\sigma^i,0)$, $(\sigma^i,\sigma^{j})$, $0 \leq i \leq L$, $i \leq j \leq L$
and consists of rectangles and triangles, and only the triangles abutt on the 
diagonal $\{\xh=\yh\}$. The triangulation of $\widehat S_3$ consists of triangles only 
given by the nodes 
$(0,\sigma^i)$, $(\sigma^i,\sigma^i)$, $i=0,\ldots,L$. 
\end{enumerate}
\end{definition}
\begin{remark}
\label{remk:further-refinement-patterns} 
We kept the list of possible patch refinement patterns in 
Definition~\ref{def:admissible-patterns} small in order
to reduce the number of cases to be discussed for the $hp$-FE error bounds.
A larger number of refinement patterns could 
facilitate greater flexibility in mesh generation. 
In particular, 
the reference patch meshes do not contain general quadrilaterals 
but only (axiparallel) rectangles; 
this restriction is not essential 
but leads to some simplifications in the $hp$-FE error analysis in \cite{banjai-melenk-schwab19-RD}. 

The addition of the diagonal line in the reference corner,
tensor, and mixed patches is done 
to be able to apply the regularity theory of \cite{melenk02} and 
probably not necessary in actual computations. We also mention that with 
additional constraints on the macro triangulation $\calT^\M$ 
the diagonal line could be dispensed with, \cite{banjai-melenk-schwab19-RD}. 
\eremk
\end{remark}

The following definition of the geometric boundary layer mesh $\Tg$
formalizes the requirement on the meshes that anisotropic refinement towards $\partial\Omega$
is needed as well as geometric refinement towards the corners. 
\begin{definition} [geometric boundary layer mesh, \protect{\cite[Def.~{2.3}]{banjai-melenk-schwab19-RD}}]
\label{def:bdylayer-mesh} 
Let ${\mathcal T}^{\M}$ be a fixed macro-triangulation consisting of quadrilaterals with 
analytic element maps
that satisfies \cite[Def.~{2.4.1}]{melenk02}. 

Given $\sigma \in (0,1)$, $L$, $n \in {\mathbb N}_0$ with $n \ge L$, 
a mesh $\Tg$ is called a \emph{geometric boundary layer mesh} 
if the following conditions hold:
\begin{enumerate}
\item 
$\Tg$ is obtained by refining each element $K^{\M} \in {\mathcal T}^{\M}$
according to the finite catalog ${\mathfrak P}$ 
of structured patch-refinement patterns specified in Definition~\ref{def:admissible-patterns}, 
governed by the parameters $\sigma$, $L$, and $n$.
\item 
$\Tg$ is a regular triangulation of $\Omega$, i.e., 
it does not have hanging nodes. 
Since the element maps for the refinement patterns
are assumed to be affine, this requirement ensures that the 
resulting triangulation satisfies 
\cite[Def.~{2.4.1}]{melenk02}.  
\end{enumerate}
For each macro-patch $K^\M \in {\mathcal T}^{\M}$, 
exactly one of the following cases is possible: 
\begin{enumerate}
\setcounter{enumi}{2}
\item 
\label{item:def-geo-3}
$\overline{K^\M} \cap \partial\Omega = \emptyset$. 
Then the trivial patch is selected as the reference patch. 
\item 
\label{item:def-geo-4} 
$\overline{K^\M} \cap \partial \Omega$ is a single point. Then two cases
can occur: 
\begin{enumerate}[(a)]
\item 
$\overline{K^\M} \cap \partial \Omega =  \{\bA_j\}$ for a vertex $\bA_j$ of 
$\Omega$. Then 
the corresponding reference patch is the corner patch
$\check{\calT}^{\Co,n}_{geo,\sigma}$ with $n$ layers of refinement towards 
the origin $\bO$. 
Additionally, $F_{K^\M}(\bO) = \bA_j$. 
\item 
$\overline{K^\M} \cap \partial\Omega = \{\bP\}$, 
where the boundary point $\bP$ is not a vertex of $\Omega$. 
Then the refinement pattern is the
corner patch $\check{\calT}^{\Co,L}_{geo,\sigma}$ with 
$L$ layers of geometric mesh refinement towards $\bO$. 
Additionally, 
it is assumed that $F_{K^\M}(\bO)  = \bP \in \partial\Omega$. 
\end{enumerate}
\item 
\label{item:def-geo-5}
$\overline{K^\M} \cap \partial \Omega = \overline{e}$ for an edge $e$ of 
$K^\M$ and neither  endpoint of $e$ is a vertex of $\Omega$. Then
the refinement pattern is the boundary layer patch $\check {\calT}^{\BL,L}_{geo,\sigma}$ and additionally 
$F_{K^\M}(\{\widehat y = 0\}) \subset \partial\Omega$. 
\item 
\label{item:def-geo-6}
$\overline{K^\M} \cap \partial \Omega = \overline{e}$ for an edge $e$ of 
$K^\M$ and exactly one endpoint of $e$ is a vertex $\bA_j$ of $\Omega$. Then
the refinement pattern is the mixed layer patch
$\check {\calT}^{\Mi,L,n}_{geo,\sigma}$ and additionally 
$F_{K^\M}(\{\widehat y = 0\}) \subset \partial\Omega$ as well as 
$F_{K^\M}(\bO) = \bA_j$. 
\item 
\label{item:def-geo-7}
Exactly two edges of a macro-element $K^\M$ are situated on $\partial\Omega$. 
Then the refinement pattern 
is the tensor patch $\check {\calT}^{\Te,L,n}_{geo,\sigma}$. 
Additionally, 
it is assumed that 
$F_{K^\M}(\{\widehat y = 0\}) \subset \partial\Omega$, 
$F_{K^\M}(\{\widehat x = 0\}) \subset \partial\Omega$, and 
$F_{K^\M}(\bO) = \bA_j$ for a vertex $\bA_j$ of $\Omega$. 
\end{enumerate}
Finally, 
the following technical condition ensures the 
existence of certain meshlines: 
\begin{enumerate}
\setcounter{enumi}{7}
\item 
\label{item:def-geo-mesh-9}
For each vertex $\bA_j$ of $\Omega$, introduce a set of lines 
$$\ell = \bigcup_{K^\M \colon \bA_j \in \overline{K^\M}} 
\{\, F_{K^\M}(\{\widehat y = 0\}), F_{K^\M}(\{\widehat x = 0\}), F_{K^\M}(\{\widehat x = \widehat y\})\, \}.
$$
Let $\Gamma_j$, $\Gamma_{j+1}$ be the two boundary arcs of $\Omega$ that meet at $\bA_j$. 
Then there exists a line $e \in \ell$ such that 
the interior angles $\angle(e,\Gamma_j)$ and $\angle(e,\Gamma_{j+1})$ 
are both less than $\pi$. 
\end{enumerate}
\end{definition}
\begin{example}
Fig.~\ref{fig:patch-examples} (left and middle) shows an example 
of an $L$-shaped domain with macro triangulation and 
suitable refinement patterns. 
\eremk
\end{example}
\begin{remark}
\label{remk:geo-mesh-forL=1}
For fixed $L$ and increasing $n$, the meshes $\Tg$ are geometrically 
refined towards the vertices of $\Omega$. 
These meshes are classical geometric meshes for elliptic problems in corner domains 
as introduced in \cite{babuska-guo86a,babuska-guo86b} and discussed in 
\cite[Sec.~{4.4.1}]{phpSchwab1998}. 
\eremk
\end{remark}
%
\subsection{$hp$-FE approximation of singularly perturbed problems on geometric boundary layer meshes}
\label{sec:sing-approx-geo-mesh}
The principal result \cite[Thm.~{4.1}]{banjai-melenk-schwab19-RD} on 
robust exponential convergence of $hp$-FEM for \eqref{eq:sg-per}
reads as follows: 
\begin{proposition}[\protect{\cite[Thm.~{4.1}]{banjai-melenk-schwab19-RD}}]
\label{prop:singular-approx}
Let $\Omega \subset {\mathbb R}^2$ be a curvilinear polygon with 
$J$ vertices as described in Section~\ref{sec:GeoPrel}. 
Let $A$, $c\ge c_0>0$, $f$ satisfy (\ref{eq:sg-per-asmp}). 
Denote by $\{\Tg\}_{L\geq 0, n \ge L}$ 
a sequence of geometric boundary layer meshes
in the sense of Definition~\ref{def:bdylayer-mesh}. 
Fix $c_1 > 0$. 

Then 
there are constants $C$, $b >0 $, $\beta \in [0,1)$ (depending solely on the data 
$A$, $c$, $f$, $\Omega$, on the parameter $c_1$, and on the analyticity properties 
of the patch-maps of the macro-triangulation ${\mathcal T}^\M$) 
such that the following holds: 
If $\varepsilon \in (0,1]$ and $L$ 
satisfy the \emph{(boundary layer) scale resolution condition}
\begin{equation} \label{eq:L-eps-resolution}
{\sigma^L} \leq c_1 \varepsilon  
\end{equation}
then, for any $q$, $n \in {\mathbb N}$, 
the solution $u^\varepsilon \in H^1_0(\Omega)$ of \eqref{eq:sg-per} can be approximated
from $S^q_0(\Omega,\Tg)$ such that
\begin{align}
\label{eq:thm:singular-approx-10}
\inf_{v \in S^{q}_0(\Omega,\Tg)} &
\left(
\|u^\varepsilon - v\|_{L^2(\Omega)} + 
\varepsilon \|\nabla( u^\varepsilon - v)\|_{L^2(\Omega)} 
\right)
\leq 
C q^{9} 
\left[\varepsilon^\beta \sigma^{(1-\beta)n}  + e^{-b q}\right], 
\\
\label{eq:thm:singular-approx-20}
& N:=\operatorname{dim} S^{q}_0(\Omega,\Tg)
\leq C \left( L^2 q^2 \operatorname{card} {\mathcal T}^\M  
+ n q^2 J\right).
\end{align}
\end{proposition}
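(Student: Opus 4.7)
The plan is to combine an $\varepsilon$-explicit analytic regularity decomposition of $u^\varepsilon$ with patchwise $hp$-approximation estimates, each tailored to the refinement pattern present in the corresponding macro-element $K^\M$. Since $\Tg$ is built by transporting the reference catalog $\mathfrak{P}$ through analytic patch maps $F_{K^\M}$, and these maps are finite in number and bounded in analytic norm, all estimates can be done on the reference square $\widehat S$ (or $\widehat T$) and pulled back.

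First, I would invoke an $\varepsilon$-explicit regularity decomposition along the lines of \cite{MelCS_RegSingPert,melenk02}. Near each vertex $\bA_j$ one writes $u^\varepsilon = u^\varepsilon_S + u^\varepsilon_{BL} + u^\varepsilon_C + u^\varepsilon_{CB}$, where $u^\varepsilon_S$ is analytic with bounds uniform in $\varepsilon$, $u^\varepsilon_{BL}$ is a boundary layer contribution with anisotropic analytic bounds reflecting decay at rate $1/\varepsilon$ into $\Omega$, $u^\varepsilon_C$ carries the $r^\beta$ corner behaviour in the sense of countably-normed weighted spaces (with $\beta \in [0,1)$ dictated by the largest interior angle), and $u^\varepsilon_{CB}$ is the corner-layer interaction. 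Away from corners only $u^\varepsilon_S + u^\varepsilon_{BL}$ appears.

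Next, I would approximate each component separately on each patch type. On a \emph{trivial patch} the data is analytic with $\varepsilon$-uniform bounds, so a single polynomial of degree $q$ gives error $O(e^{-bq})$ in the $\varepsilon$-weighted norm. On a \emph{boundary layer patch} $\check{\calT}^{\BL,L}_{geo,\sigma}$, the innermost sliver of width $\sigma^L$ captures the exponentially decaying layer once the scale-resolution condition $\sigma^L \leq c_1 \varepsilon$ holds; tensor-product Gauss--Lobatto interpolation together with sharp one-dimensional $hp$-estimates for $e^{-y/\varepsilon}$ on geometric meshes produces an $O(e^{-bq})$ contribution uniformly in $\varepsilon$, and the remaining elements only see the smooth remainder. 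On \emph{corner patches} $\check{\calT}^{\Co,n}_{geo,\sigma}$, the singular part is handled by the classical geometric-mesh argument of Babu\v{s}ka--Guo \cite{babuska-guo86a,babuska-guo86b,phpSchwab1998}, which delivers the factor $\varepsilon^{\beta}\sigma^{(1-\beta)n}$ in view of the weighted bounds on $u^\varepsilon_C$. \emph{Tensor} and \emph{mixed patches} combine both mechanisms, the diagonal subdivision in Definition~\ref{def:admissible-patterns} reducing the interaction sub-elements to configurations already covered by \cite{melenk02}.

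The hard part will be ensuring that the three mechanisms (polynomial degree $q$, corner grading $n$, boundary-layer scale $L$) cooperate \emph{simultaneously and robustly in $\varepsilon$} inside mixed and tensor patches, where the corner singularity and the edge layer are both active: this is where the $\varepsilon$-weighted countably-normed regularity bounds must be matched to anisotropic $hp$-interpolants, and it produces the algebraic factor $q^{9}$ coming from stability constants of local interpolation operators and from polynomial inverse estimates used to glue the patchwise approximants into $S^{q}_0(\Omega,\Tg)$. Assembling the patch contributions, using the scale-resolution hypothesis to dominate the residual boundary-layer error by $\sigma^{(1-\beta)n}$, and noting the $\sigma$-geometric bound $\varepsilon \le \sigma^L$, yields \eqref{eq:thm:singular-approx-10}. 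The dimension count \eqref{eq:thm:singular-approx-20} is then immediate from the catalog: $O(L^2)$ elements per tensor patch, $O(n)$ per corner patch, $O(L)$ per boundary-layer or mixed patch, each with $O(q^2)$ local degrees of freedom, summed over $\operatorname{card}{\mathcal T}^\M$ macro-elements and over the $J$ vertices.
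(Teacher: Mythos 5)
The paper itself contains no proof of Proposition~\ref{prop:singular-approx}: it is imported verbatim as \cite[Thm.~4.1]{banjai-melenk-schwab19-RD}, and all of the technical content lives in that companion paper. Your outline correctly reconstructs the architecture of that proof: the $\varepsilon$-explicit decomposition of $u^\varepsilon$ into a smooth part, boundary layers, corner singularities, and corner-layer interactions goes back to \cite{MelCS_RegSingPert,melenk02}, and the patch-by-patch approximation over the catalog ${\mathfrak P}$ (trivial, boundary-layer, corner, tensor, mixed), with the scale resolution condition guaranteeing that the innermost anisotropic slivers resolve the layer of width $O(\varepsilon)$, is exactly how the cited theorem is organized. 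As a roadmap your proposal is sound.

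Two caveats. First, a directional slip: you invoke ``the $\sigma$-geometric bound $\varepsilon \le \sigma^L$,'' but \eqref{eq:L-eps-resolution} reads $\sigma^L \le c_1\varepsilon$, i.e., the opposite inequality (up to $c_1$); what is used is that the innermost layer width $\sigma^L$ is \emph{at most} a fixed multiple of $\varepsilon$, and the term $\varepsilon^\beta\sigma^{(1-\beta)n}$ is produced by the corner and corner-layer components on the $n$-fold geometrically refined corner/tensor/mixed patches, not by the interplay of $L$ and $\varepsilon$. (Relatedly, the mixed patch has $O(L^2)+O(n-L)$ elements, not $O(L)$; this does not change \eqref{eq:thm:singular-approx-20} since the $L^2q^2\operatorname{card}\calT^\M$ term dominates.) Second, and more substantively, the proposal names but does not supply the two genuinely hard ingredients: the $\varepsilon$-uniform countably-normed (weighted analytic) regularity bounds for the corner and corner-layer parts, and the anisotropic tensor-product $hp$-interpolation estimates on the mixed and tensor patches that remain stable under the analytic patch maps $F_{K^\M}$. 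These occupy the bulk of \cite{banjai-melenk-schwab19-RD}, and the exponent $9$ in $q^9$ as well as the admissible $\beta\in[0,1)$ are outputs of that analysis rather than of generic stability or inverse estimates. So your text is an accurate blind reconstruction of the cited proof's strategy, but not a self-contained proof.
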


Proposition~\ref{prop:singular-approx} is restricted to $\varepsilon \in (0,1]$. 
For $\varepsilon \ge 1$, the solution $u^\varepsilon$ of 
\eqref{eq:sg-per} does not have boundary layer but 
merely corner singularities. Hence, by Remark~\ref{remk:geo-mesh-forL=1}
meshes with fixed $L$ are appropriate. In particular, the boundary 
layer scale resolution condition
(\ref{eq:L-eps-resolution}) is not required:  
\begin{proposition} \label{prop:singular-approx-eps>1}
Assume the hypotheses on $\Omega$ and the data $A$, $c$, $f$ 
as in Proposition~\ref{prop:singular-approx}. 
Let $\{\Tg\}_{L\ge 0, n \ge L}$ be a sequence of 
geometric boundary  layer  meshes\footnote{
No boundary layer refinement/ resolution is required here, i.e.,
``ordinary'', corner refined geometric mesh sequences will suffice.}.

There are constants $C$, $b>0$, $\beta \in [0,1)$
(depending solely on $A$, $c$, $f$, $\sigma\in (0,1)$, 
and the analyticity properties of the macro-triangulation) 
such that the solution $u^\varepsilon$ of (\ref{eq:sg-per}) satisfies 
\begin{equation}
\forall L,n \in {\mathbb N}_0, \ q\in {\mathbb N}\colon\quad 
\inf_{v \in S^{q}_0(\Omega,\Tg)}
\|u^\varepsilon - v\|_{H^1(\Omega)} \leq C \varepsilon^{-2} q^9 
\left( \sigma^{(1-\beta)n}   + e^{-b q}\right), 
\end{equation}
and $\operatorname{dim} S^q_0(\Omega,\Tg)$ satisfies 
(\ref{eq:thm:singular-approx-20}).
\end{proposition}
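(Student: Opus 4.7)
The plan is to reduce the $\varepsilon \ge 1$ regime to the regular (non-singularly perturbed) elliptic case by rescaling the unknown, and then to invoke the classical exponential convergence theory for $hp$-FEM on geometrically corner-refined meshes in curvilinear polygons, as alluded to in Remark~\ref{remk:geo-mesh-forL=1} and developed in \cite{babuska-guo86a,babuska-guo86b} and \cite[Sec.~4.4.1]{phpSchwab1998}.

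First, I would rescale: set $w := \varepsilon^{2} u^\varepsilon \in H^1_0(\Omega)$. Substitution into \eqref{eq:sg-per} shows that $w$ solves the regular elliptic problem $-\operatorname{div}(A\nabla w) + \tilde c\, w = f$ in $\Omega$, $w|_{\partial\Omega}=0$, with $\tilde c := c/\varepsilon^{2}$. For $\varepsilon \ge 1$, the coefficient $\tilde c$ remains analytic on $\overline{\Omega}$, satisfies $0 \le \tilde c \le \|c\|_{L^\infty(\Omega)}$ uniformly in $\varepsilon$, and the associated bilinear form is $H^1_0$-coercive with constant bounded below by the ellipticity constant of $A$ times the Poincar\'e constant of $\Omega$, hence uniformly in $\varepsilon$. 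Classical analytic regularity theory (Babu\v ska-Guo) then yields that $w$ belongs to a countably-normed class in corner-weighted Sobolev spaces, with bounds depending only on $f$, $A$, $\Omega$ and on $\|c\|_\infty$, and in particular uniformly in $\varepsilon \ge 1$. The geometric boundary layer mesh $\Tg$ contains at each vertex $\bA_j$ of $\Omega$ the corner patch $\check{\calT}^{\Co,n}_{geo,\sigma}$ with $n$ layers of refinement toward $\bA_j$, and on these patches $\Tg$ coincides with the classical corner-refined geometric mesh for which exponential $hp$-FEM convergence is known; the anisotropic refinement within boundary layer, mixed, and tensor patches away from corners is harmless for a solution that is analytic up to the boundary edges. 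One therefore obtains
\[
\inf_{v\in S^q_0(\Omega,\Tg)} \|w-v\|_{H^1(\Omega)}
\;\le\; C\,q^9\bigl(\sigma^{(1-\beta)n} + e^{-bq}\bigr),
\]
with $C$, $b>0$, $\beta\in[0,1)$ independent of $\varepsilon\ge 1$. Unscaling via $u^\varepsilon = \varepsilon^{-2} w$ and the linearity of $S^q_0(\Omega,\Tg)$ yields the claimed $\varepsilon^{-2}$ bound, while the dimension estimate \eqref{eq:thm:singular-approx-20} is inherited verbatim from Proposition~\ref{prop:singular-approx} since the mesh $\Tg$ is unchanged.

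The main obstacle will be justifying that the constants $C$, $b$, $\beta$ in the invoked classical exponential convergence estimate are indeed uniform in the reaction coefficient $\tilde c \in [0,\|c\|_\infty]$, and in particular do not degenerate as $\tilde c\downarrow 0$ (i.e.\ $\varepsilon\to\infty$). In that limit one obtains the pure diffusion boundary value problem $-\operatorname{div}(A\nabla w)=f$, which is still well-posed in $H^1_0(\Omega)$ and whose solution enjoys the same corner-weighted analytic regularity, so no essential degeneration arises; this can be verified either by inspection of the weighted a priori estimates of \cite{babuska-guo86a,babuska-guo86b}, or, alternatively, by re-running the proof strategy of Proposition~\ref{prop:singular-approx} with the formal singular perturbation parameter $\tilde\varepsilon=1$ on the rescaled problem for $w$, in which case the scale-resolution condition \eqref{eq:L-eps-resolution} holds trivially for any $L\ge 0$ (up to a harmless redefinition of $c_1$) and the factor $\varepsilon^\beta$ appearing on the right-hand side of Proposition~\ref{prop:singular-approx} simply reduces to $1$.
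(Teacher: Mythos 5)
Your proof is correct and follows the route the paper intends: Proposition~\ref{prop:singular-approx-eps>1} is stated without an explicit proof, resting on the observation (Remark~\ref{remk:geo-mesh-forL=1}) that for $\varepsilon\ge 1$ only corner singularities remain, and your rescaling $w=\varepsilon^{2}u^\varepsilon$ --- which converts \eqref{eq:sg-per} into a uniformly elliptic problem with reaction coefficient $c/\varepsilon^{2}$ whose analyticity constants and coercivity (via the principal part and Poincar\'e) are uniform in $\varepsilon\ge1$ --- is exactly the clean way to make that reduction precise and to produce the $\varepsilon^{-2}$ prefactor. Your closing caveat about non-degeneration as the reaction coefficient tends to zero is resolved correctly, so no gap remains.
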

\subsection{$hp$-FE approximation of singularly perturbed 
            problems on admissible meshes $\Tmin(\varepsilon)$ in $\Omega$}
\label{sec:sing-approx-admissible-meshes}
In Proposition~\ref{prop:singular-approx}, the solution $u^\varepsilon$ 
is approximated on patchwise geometric meshes. These meshes are able to 
capture boundary layers (and corner layers) on a whole range of singular
perturbation parameters $\varepsilon$: 
as long as a lower bound for $\varepsilon$ is known
and provided that geometric mesh refinement resolves all scales,
robust exponential convergence is assured.

On the other hand, if there is a single, explicitly known scale 
$\varepsilon$ then the ``minimal, admissible boundary layer meshes'' 
$\Tmin(\varepsilon):= \calT(\min\{\kappa_0,\lambda p \varepsilon\},L)$
of \cite[Def.~{2.4.4}]{melenk02} (see also \cite[Fig.~{11}]{SSX98_321}
or \cite[Fig.~{2}]{MMCAXeno_Balanced2016}), 
which are designed to resolve a single, 
explicitly known length scale with $hp$-FEM may be employed. 
In contrast to the geometric boundary layer meshes of 
Def.~\ref{def:bdylayer-mesh}, 
these ``minimal'' boundary-fitted meshes are $\varepsilon$-dependent.
\begin{proposition}[\protect{\cite[Thm.~{2.4.8} in conjunction with Thm.~{3.4.8}]{melenk02}}] 
\label{prop:thm-2.4.8}
Let $\Omega \subset {\mathbb R}^2$ be a curvilinear polygon with
$J$ vertices as described in Section~\ref{sec:GeoPrel}. Let $A$, $c\ge c_0>0$, $f$ 
satisfy (\ref{eq:sg-per-asmp}). 

Consider,  for $\kappa_0>0$ determined by $\Omega$,
the two-parameter family 
${\mathcal T}(\kappa,L)$, $(\kappa,L) \in (0,\kappa_0] \times {\mathbb N}$, 
of \emph{admissible meshes} in the sense of 
\cite[Def.~{2.4.4}]{melenk02},\cite[Def.~{3.1}, Figs.~1, 2]{FstmnMM_hpBalNrm2017}.
Let $u^\varepsilon$ be the solution of (\ref{eq:sg-per}).  

Then there are constants $b$, $\lambda_0$ 
independent of $\varepsilon \in (0,1]$ 
such that for every $\lambda \in (0,\lambda_0]$ there is $C > 0$ such that 
for every $q\ge 1$, $L \ge 0$
there holds the error bounds
\begin{align} 
\label{eq:prop:thm-2.4.8-a}
& \inf_{v \in S^q_0(\Omega,\Tmin(\varepsilon))} 
\|u^\varepsilon - v\|_{L^2(\Omega)} 
     + \varepsilon \|\nabla( u^\varepsilon - v)\|_{L^2(\Omega)} 
 \leq C q^6 \left[ e^{-b \lambda q} + \varepsilon e^{-b L}\right], 
\\
\label{eq:prop:thm-2.4.8-b}
& \quad \Tmin(\varepsilon): = \calT(\min\{\kappa_0,\lambda q \varepsilon\},L), 
\\
\label{eq:prop:thm-2.4.8-c}
& \quad 
  N := \operatorname{dim} 
S^q_0(\Omega, \Tmin(\varepsilon)) 
 \leq C L q^2. 
\end{align}
In particular, for $L \sim q$, one has with $C$, $b'$ independent of $\varepsilon$
\begin{align*}
\inf_{v \in S^q_0(\Omega,\Tmin(\varepsilon))}
\|u^\varepsilon - v\|_{L^2(\Omega)} + 
\varepsilon \|\nabla( u^\varepsilon - v)\|_{L^2(\Omega)} 
\leq C \exp(-b' N^{1/3}). 
\end{align*}
\end{proposition}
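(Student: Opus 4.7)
The plan is to leverage the regularity decomposition of $u^\eps$ developed in \cite{melenk02} for singularly perturbed reaction-diffusion problems on analytic curvilinear polygons, and then construct an $hp$-quasi-interpolant on the admissible mesh $\Tmin(\eps)$ that treats each piece of the decomposition separately. First I would invoke the analytic regularity theorem: under the analyticity hypotheses \eqref{eq:sg-per-asmp}, the solution admits a splitting $u^\eps = w^\eps + \sum_j \chi_j^\eps + \sum_j \psi_j^\eps$ into a smooth ``outer'' part $w^\eps$, edge boundary layer components $\chi_j^\eps$ (of the form $e^{-\operatorname{dist}(\cdot,\Gamma_j)/\eps}$ times an analytic profile), and vertex contributions $\psi_j^\eps$ (combining algebraic corner singularities with corner layers). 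Each component comes with explicit, $\eps$-weighted analytic-type bounds on derivatives of arbitrary order, with $\eps$-power prefactors that balance against the different resolution scales of $\Tmin(\eps)$.

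Next I would approximate each piece on $\Tmin(\eps)$ into $S^q_0(\Omega,\Tmin(\eps))$. The smooth part $w^\eps$ is handled by standard $hp$-approximation on the macro-sized interior elements, yielding the $e^{-b\lambda q}$ contribution. The edge boundary layers $\chi_j^\eps$ are captured on the single thin layer of width $\min\{\kappa_0,\lambda q\eps\}$ fitted to each edge $\Gamma_j$: after pulling back to the reference patch, the task reduces to tensor-product $hp$-approximation of a function of the form $e^{-t/\eps}\times(\text{analytic})$ on an interval of length $O(\lambda q\eps)$, for which classical Gauss--Lobatto-based estimates yield exponential convergence in $q$, provided $\lambda$ is chosen sufficiently small to ``swallow'' the exponential growth of the profile on the stretched element. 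The vertex contributions $\psi_j^\eps$ are resolved by the $L$ layers of geometric refinement towards each corner, where the analyticity of the corner singularity function together with Babu\v{s}ka--Guo-type $hp$ estimates on geometric corner meshes yields the $\eps\,e^{-bL}$ term. Summing the three error contributions gives \eqref{eq:prop:thm-2.4.8-a}; the algebraic prefactor $q^6$ collects the cost of the piecewise $hp$-interpolation estimates together with trace and inverse inequalities on anisotropic elements.

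The dimension count \eqref{eq:prop:thm-2.4.8-c} is a direct element tally: $\Tmin(\eps)$ contains $O(1)$ interior macro elements, $O(1)$ thin boundary-fitted elements per edge, and $L$ geometrically refined elements at each of the $J$ vertices, for a total of $O(L)$ elements each carrying $O(q^2)$ local shape functions. Balancing the two exponentials in \eqref{eq:prop:thm-2.4.8-a} via $L \sim q$ then yields $N \sim q^3$ and hence $\exp(-b' N^{1/3})$. The main technical obstacle is the construction of the quasi-interpolant on the anisotropic boundary-layer elements, whose aspect ratio is $O(\eps^{-1})$: the interpolant must be conforming across the (straight but anisotropic) interface between each thin edge-strip and the adjacent interior macro-element, and its error constants must be simultaneously robust in $\eps$ and in the polynomial degree $q$, with correct tracking of the $\eps$-weights dictated by the regularity decomposition. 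This patch-by-patch construction, carried out using the analytic macro-maps $F_{K^\M}$, absorbs essentially all the work; consequently the proof proceeds by invoking \cite[Thms.~2.4.8 and~3.4.8]{melenk02} directly.
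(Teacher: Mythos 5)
Your proposal is correct and matches the paper's treatment: the paper offers no proof of this proposition beyond the citation in its header, deferring entirely to \cite[Thms.~2.4.8 and 3.4.8]{melenk02}, which is exactly where your argument also lands. Your sketch of what those theorems contain --- the smooth/boundary-layer/corner decomposition with $\varepsilon$-weighted analytic bounds, the patch-by-patch $hp$-interpolation on the thin $O(\lambda q\varepsilon)$ strip and the $L$-layer geometric corner refinement, and the element count $N = O(Lq^2)$ giving $\exp(-b'N^{1/3})$ for $L\sim q$ --- is an accurate account of the cited machinery.
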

For $\varepsilon \ge 1$ these admissible boundary layer meshes are 
the well-known geometric meshes with $L$ layers of geometric refinement 
as introduced in \cite{babuska-guo86a,babuska-guo86b} and discussed in 
\cite[Sec.~{4.4.1}]{phpSchwab1998}. 
These  geometric, corner-refined meshes 
are similar to the meshes $\Tg$ with fixed $L = 0$ discussed in 
Remark~\ref{remk:geo-mesh-forL=1}.
In particular, the minimal boundary layer meshes $\Tmin(\varepsilon)$ 
for $\varepsilon \ge 1$ do not really depend on 
$\varepsilon$, $\lambda$, and $q$. 
However, for consistency of notation, we keep the notation 
$\Tmin(\varepsilon)$ in the following result, which covers the case 
$\varepsilon \ge 1$. We need this result since 
the range \eqref{eq:RDevpBds} of eigenvalues $\mu_i$ involves also
eigenvalues $\mu_i \ge 1$.
\begin{proposition} \label{prop:eps>1}
Under the assumptions of Proposition~\ref{prop:thm-2.4.8},
there exist constants $b$, $C>0$ such that
\begin{align*} 
&
\forall \varepsilon \geq 1, \forall q,L\in {\mathbb N}:
\quad 
\inf_{v \in S^q_0(\Tmin(\varepsilon))}
\|u^\varepsilon - v\|_{H^1(\Omega)} 
 \leq C \varepsilon^{-2} q^6 \left[ e^{-b \lambda q} +  e^{-b L}\right], \\
	& \quad \mbox{where}\quad 
	 N := \operatorname{dim} S^q_0(\Omega,\Tmin(\varepsilon)) \leq C L q^2. 
\end{align*}
In particular, for $L \sim q$, there are constants $b'$, $C>0$ such that 
\begin{align*}
\forall \varepsilon \geq 1, q,L\in {\mathbb N}: 
\quad
\inf_{v \in S^q_0(\Omega,\Tmin(\varepsilon))} 
\|u^\varepsilon - v\|_{H^1(\Omega)} 
\leq C \varepsilon^{-2} \exp(-b' N^{1/3}). 
\end{align*}
\end{proposition}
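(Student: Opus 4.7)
The plan is to reduce the result to the classical $hp$-FEM approximation theory for regular elliptic boundary value problems with analytic data on curvilinear polygons, exploiting the fact that for $\varepsilon \geq 1$ no boundary layers are present and only the corner singularities of $\Omega$ must be resolved. To this end, I introduce the rescaled function $\tilde u := \varepsilon^2 u^\varepsilon \in H^1_0(\Omega)$, which satisfies
\begin{equation*}
-\DIV(A \nabla \tilde u) + \tilde c \, \tilde u = f, \qquad \tilde c := c/\varepsilon^2.
\end{equation*}
For $\varepsilon \geq 1$ the coefficient $\tilde c$ is analytic on $\overline{\Omega}$ with $0 \leq \tilde c \leq \|c\|_{L^\infty(\Omega)}$ uniformly in $\varepsilon$, and since the principal part $-\DIV(A\nabla \cdot)$ is coercive on $H^1_0(\Omega)$ via Poincar\'e's inequality, $H^1$ a priori bounds for $\tilde u$ can be chosen independent of $\varepsilon \geq 1$.

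Next I observe that, for $\varepsilon \geq 1$, the admissible mesh $\Tmin(\varepsilon) = \calT(\min\{\kappa_0, \lambda q \varepsilon\}, L)$ either coincides with the classical geometric corner-refined mesh $\calT(\kappa_0, L)$ (when $\lambda q \varepsilon \geq \kappa_0$) or refines it by an additional thin strip of width $\lambda q \varepsilon$ adjacent to $\partial\Omega$. By monotonicity of the best-approximation error under mesh refinement, the approximation error from $S^q_0(\Omega, \Tmin(\varepsilon))$ is bounded by the corresponding best-approximation error from $S^q_0(\Omega, \calT(\kappa_0, L))$, so it suffices to work on the $\varepsilon$-independent geometric corner mesh.

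I then invoke the classical exponential $hp$-FEM convergence theory for second-order elliptic problems with analytic coefficients and right-hand side on curvilinear polygons with geometric corner refinement (cf.\ \cite{babuska-guo86a,babuska-guo86b,phpSchwab1998} and \cite[Sec.~4.4.1]{phpSchwab1998}): there exist $C$, $b > 0$, depending only on the analyticity constants of $A$, $c$, $f$, on $\Omega$, on $\sigma$, and on the macro-triangulation but independent of $\varepsilon \geq 1$, such that
\begin{equation*}
\inf_{v \in S^q_0(\Omega, \calT(\kappa_0, L))} \|\tilde u - v\|_{H^1(\Omega)} \leq C q^6 \bigl( e^{-b\lambda q} + e^{-b L} \bigr).
\end{equation*}
Dividing through by $\varepsilon^2$ and using $u^\varepsilon = \tilde u / \varepsilon^2$ yields the stated bound with its $\varepsilon^{-2}$ prefactor; the dimension estimate $N \leq C L q^2$ is inherited directly from Proposition~\ref{prop:thm-2.4.8}. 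Finally, choosing $L \sim q$ gives $N \sim q^3$, hence $q \sim N^{1/3}$ and the exponential decay $\exp(-b' N^{1/3})$ in the last display of the statement.

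The main obstacle is to certify that the constants $C$ and $b$ in the $hp$-estimate for $\tilde u$ are genuinely independent of $\varepsilon \geq 1$: a priori one worries that the corner-weighted analytic regularity estimates for $\tilde u$ might require a strictly positive lower bound on $\tilde c$, which degenerates as $\varepsilon \to \infty$. However, the relevant countably-normed regularity theory and the ensuing $hp$-approximation estimates on geometric meshes rely only on the non-negativity of $\tilde c$ together with $H^1_0$-coercivity, both of which are supplied here by the diffusion term alone; the remaining dependence is through $\|A\|_{L^\infty}$, $\|\tilde c\|_{L^\infty} \leq \|c\|_{L^\infty}$, and $\|f\|_{L^\infty}$, all uniform in $\varepsilon \geq 1$. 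With this uniformity confirmed, the exponential-in-$q$ and exponential-in-$L$ decay follows.
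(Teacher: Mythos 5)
Your overall strategy --- rescale to $\tilde u = \varepsilon^2 u^\varepsilon$, observe that $\tilde u$ solves $-\operatorname{div}(A\nabla\tilde u)+\varepsilon^{-2}c\,\tilde u=f$, a \emph{regular} (uniformly in $\varepsilon\ge 1$) elliptic problem with analytic data and hence no boundary layers, invoke the classical corner-refined $hp$ theory, and divide by $\varepsilon^2$ --- is exactly the reduction the paper intends (it states the proposition without proof, but the preceding discussion about $\Tmin(\varepsilon)$ degenerating to classical geometric corner meshes for $\varepsilon\ge1$ points to this argument). Your closing paragraph correctly identifies and disposes of the one real analytic concern, namely that the regularity/approximation theory must not require a positive lower bound on the zeroth-order coefficient: coercivity via Poincar\'e and $\tilde c\ge 0$ suffice, and the analyticity constants of $\tilde c=c/\varepsilon^2$ are uniform for $\varepsilon\ge1$. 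The scaling step $\inf_v\|u^\varepsilon-v\|_{H^1}=\varepsilon^{-2}\inf_w\|\tilde u-w\|_{H^1}$ is also clean, since $w=\varepsilon^2 v$ ranges over the same space.

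There is, however, one step that does not hold as written: the claim that when $\lambda q\varepsilon<\kappa_0$ the mesh $\Tmin(\varepsilon)=\calT(\lambda q\varepsilon,L)$ ``refines'' $\calT(\kappa_0,L)$, so that best approximation is monotone. The admissible meshes $\calT(\kappa,L)$ for different $\kappa$ are obtained by splitting the boundary macro-elements along a line at distance $\kappa$ from $\partial\Omega$; for $\kappa<\kappa_0$ the outer piece of the split element straddles the line at distance $\kappa_0$, so $\calT(\kappa,L)$ is \emph{not} a refinement of $\calT(\kappa_0,L)$, the spaces $S^q_0(\Omega,\calT(\kappa,L))$ and $S^q_0(\Omega,\calT(\kappa_0,L))$ are not nested, and the asserted inequality between best-approximation errors does not follow. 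The fix is standard and does not change the result: rather than reducing to the single mesh $\calT(\kappa_0,L)$, apply the $hp$-approximation theory on admissible meshes \emph{uniformly in} $\kappa\in(0,\kappa_0]$ --- this is precisely what the machinery behind Proposition~\ref{prop:thm-2.4.8} (\cite[Thm.~{2.4.8}, Thm.~{3.4.8}]{melenk02}) provides once the boundary-layer component of the solution decomposition is set to zero, the thin strip of anisotropic elements being harmless for the approximation of the analytic-plus-corner-singular part. With that replacement your proof is complete; the dimension count and the $L\sim q$, $N\sim q^3$ conclusion are as you state.
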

It is worth pointing out the following differences between the 
approximation on geometric boundary layer meshes $\Tg$ and on 
the minimal admissible boundary layer meshes $\Tmin$: 
a) the use of the mesh $\Tg$ requires the scale resolution condition 
\eqref{eq:L-eps-resolution}. 
It requires $L \gtrsim |\ln \varepsilon|$ 
so that the approximation result Proposition~\ref{prop:singular-approx}
depends (weakly) on $\varepsilon$. 
b) Selecting $n \simeq L \simeq q$ 
in Proposition~\ref{prop:singular-approx} yields convergence 
$O(\exp(-b \sqrt[4]{N}))$ whereas the choice $L \simeq q$ in 
Proposition~\ref{prop:thm-2.4.8} yields the better convergence behavior 
$O(\exp(-b' \sqrt[3]{N}))$. 
c) The meshes $\Tmin$ are designed to approximate
a \emph{single} scale well whereas the meshes $\Tg$ are capable to 
resolve a range of scales. 
d) The meshes $\Tmin$ rely on a suitable choice
of the parameter $\lambda$ whereas the geometric boundary layer meshes $\Tg$ 
do not have parameters that need to be suitably chosen. 
\section{Exponential Convergence of \EhpFEMp}
\label{S:hpx}
Based on the $hp$ semidiscretization in the extended variable
combined with the diagonalization in Section~\ref{S:diagonalization-abstract-setting},
we use the $hp$-approximation results 
from Section~\ref{S:approx-sing-perturb} 
to prove exponential convergence of $hp$-FEM for the 
CS-extended problem \eqref{eq:ue-variational}.  

As is revealed by the diagonalization \eqref{eq:decoupled-problems}, 
the $y$-semidiscrete solution $G^{\bmr}_{M} \ue$ 
contains $\calM$ separate length scales associated with the eigenvalues 
$\mu_i$, $i=1,...,\calM$. 
The solutions $U_i \in H^1_0(\Omega)$ 
of the resulting  $\calM$ many independent, linear second-order
reaction-diffusion problems in $\Omega$ exhibit
both, boundary layers and corner singularities. 

In {\bf Case A}, which we discuss in Section~\ref{S:ExpConvI}, 
we employ for each $i$ a ``minimal'' $hp$-FE space in $\Omega$
that resolves boundary- and corner layers appearing in the $U_i$ 
due to possibly large/small values of $\mu_i$. Mesh design principles 
for such ``minimial'' FE spaces that may resolve a single scale of a 
singularly perturbed problem have already been presented
in, e.g.,  \cite{schwab-suri96,SSX98_321,melenk97,melenk-schwab98,melenk02}; 
the specific choice $\Tmin(\varepsilon)$ has been 
discussed in 
Propositions~\ref{prop:thm-2.4.8}, \ref{prop:eps>1}
and will be used in our analysis. 

In {\bf Case B}, which we discuss in Section~\ref{S:ExpConvII}, 
we discretize these decoupled, reaction-diffusion problems
by \emph{one common $hp$-FEM in the bounded polygon $\Omega$},
which employs both, geometric corner refinement as well as
geometric boundary refinement, as in \cite{melenk-schwab98,melenk02}.
Due to the need to obtain FE solutions for \emph{all} $\mu_i$ in 
\emph{one common FE space in $\Omega$}, however (in order that the 
sum \eqref{eq:representation-by-reaction-diffusion-problems}
belong to a tensor product $hp$-FE space),
our analysis will provide \emph{one} $hp$-FE space in $\Omega$
which will resolve \emph{all} boundary and corner layers 
due to small parameters $\mu_i$ near $\partial \Omega$.
As we shall show, in {\bf Case B} the total number of DOFs 
is larger than in {\bf Case A}.

%
\subsection{Exponential Convergence I: Diagonalization and Minimal Meshes}
\label{S:ExpConvI}
The robust exponential convergence result Proposition~\ref{prop:thm-2.4.8}
allows us to establish, in conjunction with the diagonalization 
\eqref{eq:eigenvalue-problem}--\eqref{eq:U_M},
a first exponential convergence result 
in {\bf Case A} of Section \ref{S:diagonalization-abstract-setting}.
We consider the following numerical scheme, which relies
on the ``minimal boundary layer meshes'' 
$\Tmin(\varepsilon)$ from 
	\cite[Sec.~{2.4.2}]{melenk02} already discussed 
in Proposition~\ref{prop:thm-2.4.8}: 
\begin{enumerate}[(1)]
\item Select $\Y$ with $c_1 M \leq \Y \leq c_2 M$ and consider the space 
$S^{\bmr}_{\{\Y\}}((0,\Y),\calG^M_{geo,\sigma})$ for the geometric 
mesh $\calG^M_{geo,\sigma}$ on $(0,\Y)$ with $M$ elements and a
linear degree vector $\bmr$ with slope $\slope>0$. 
\item
Solve the eigenvalue problem \eqref{eq:eigenvalue-problem}, \eqref{eq:eigenbasis-normal}. 
\item Select $\lambda > 0$. 
Define $U^{q,L}_i \in S^{q}_0(\Tmin(\sqrt{\mu_i}))$ as the solution of 
\begin{equation}
\label{eq:UqL} 
\forall v \in 
S^{q}_0(\Tmin(\sqrt{\mu_i}))\ \colon\ 
a_{\mu_i,\Omega} (U^{q,L}_i,v) = d_s v_i(0) \langle f,v\rangle. 
\end{equation}
\item 
Define the approximation $\ue^{q,L}(x,y):= 
\sum_{i=1}^{\calM_{geo}} v_i(y) U^{q,L}_i(x).$
\end{enumerate}

For the approximation error $\ue - \ue^{q,L}$ we have: 
\begin{theorem}\label{thm:ExpCnvI}
%
Let $\Omega \subset {\mathbb R}^2$ be a curvilinear polygon with
$J$ vertices as described in Section~\ref{sec:GeoPrel}. Let $A$, $f$ satisfy (\ref{eq:AnRegAc}) 
and let $A$ be uniformly symmetric positive definite on $\Omega$. 
Fix positive constants $c_1$, $c_2$, and $\slope$. 

Then 
there are constants $C$, $b$, $b'$, $b''$, $\lambda_0>0$ 
(depending on $\Omega$, $A$, $c$, and the parameters characterizing the mesh family $\Tmin$)
such that for any $\lambda \in (0,\lambda_0]$ there holds for all $q$, $M\ge 1$, $L \ge 0$
\begin{align}
\label{eq:ErrBdB}
\| u - \tr \ue^{q,L} \|_{\Hs} 
&\lesssim
\|\nabla(\ue - \ue^{q,L}) \|_{L^2(y^\alpha,\C)}  
\\
\nonumber 
& 
\leq C M^{2-\alpha} q^6 \left[ \exp(-b \lambda q) + \exp(-b' L)\right] 
+ \exp(-b^{\prime\prime} M). 
\end{align}
In particular, for $q \simeq L \simeq M \simeq p$, 
denoting
$\ue^p:= \ue^{q,L}$ with this choice of $q$ and $L$, 
and 
the total number of degrees of freedom
$N =  \sum\limits_i \operatorname{dim} S^q_0(\Omega, \Tmin(\sqrt{\mu_i}))$,
\begin{equation}\label{eq:ErrBdA}
\| u - \tr \ue^{p} \|_{\Hs} 
\lesssim
\|\nabla(\ue - \ue^p) \|_{L^2(y^\alpha,\C)} 
\lesssim 
\exp(-bp) 
\simeq 
\exp(-b''' \sqrt[5]{N}) 
\;,
\end{equation}
where the constant $b'''$ depends additionally on the implied constants
in $q \simeq L \simeq M$. 
\end{theorem}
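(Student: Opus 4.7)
The plan is to decompose the error into the semidiscretization error in $y$ and the fully discrete error in $\Omega$, and then appeal in turn to Lemma~\ref{lemma:semidiscretization-error}, Lemma~\ref{lemma:diagonalization-error}, and the robust exponential approximation results Propositions~\ref{prop:thm-2.4.8} and \ref{prop:eps>1}. The first inequality in \eqref{eq:ErrBdB} is immediate from the trace estimate \eqref{Trace_estimate} and the identity $u = \tr\ue$, so the main task is to control the energy-norm error $\|\nabla(\ue - \ue^{q,L})\|_{L^2(y^\alpha,\C)}$, which by \eqref{eq:norm-C} is equivalent to $\blfa{\C}(\ue-\ue^{q,L},\ue-\ue^{q,L})^{1/2}$.

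Write $\ue - \ue^{q,L} = (\ue - G^\bmr_M \ue) + (G^\bmr_M \ue - \ue^{q,L})$. Since the two summands are Galerkin-orthogonal (the second lies in $\V^\bmr_M(\C_\Y)$), we obtain a Pythagorean identity in the $\blfa{\C}$-norm. The first term is handled directly by Lemma~\ref{lemma:semidiscretization-error}, which, for the geometric mesh $\calG^M_{geo,\sigma}$, the linear slope $\slope>0$ and $c_1 M \le \Y \le c_2 M$, supplies the factor $\exp(-b'' M)$ in \eqref{eq:ErrBdB}. For the second term we apply Lemma~\ref{lemma:diagonalization-error} with $\Pi^q_i U_i = U_i^{q,L}$, which reduces everything to estimating $\sum_{i=1}^{\calM_{geo}} \|U_i - U_i^{q,L}\|_{\mu_i,\Omega}^2$.

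Each $U_i$ solves a singularly perturbed reaction-diffusion problem of the form \eqref{eq:sg-per} with $\varepsilon^2 = \mu_i$, $c \equiv 1$, and data $d_s v_i(0) f$, so that $U_i^{q,L}$ is exactly the Galerkin approximation on the minimal boundary layer mesh $\Tmin(\sqrt{\mu_i})$. I would split the index set $\{1,\ldots,\calM_{geo}\}$ according to whether $\mu_i \le 1$ or $\mu_i > 1$ and apply Proposition~\ref{prop:thm-2.4.8} to the first group and Proposition~\ref{prop:eps>1} to the second; note the norm $\|\cdot\|_{\mu_i,\Omega}$ in \eqref{eq:blfa-sg-per} is precisely the balanced norm appearing on the left of \eqref{eq:prop:thm-2.4.8-a} (with $\varepsilon=\sqrt{\mu_i}$). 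The main obstacle is the bookkeeping that yields the factor $M^{2-\alpha}$: one must absorb (i) the pointwise bound $|v_i(0)| \le \|v_i\|_{L^\infty} \lesssim M^{(1-\alpha)/2}$ from Lemma~\ref{lemma:lambda}, (ii) the factor $\sqrt{\mu_i} \lesssim M$ accompanying $e^{-bL}$ in the gradient estimate, and (iii) the summation over $\calM_{geo} \sim M^2$ indices. Combining these contributions, and invoking analytic regularity of $f$ to hide the dependence on the data in the constant $C$, yields the claimed bound $C M^{2-\alpha} q^6[\exp(-b\lambda q)+\exp(-b'L)]$ for the fully discrete piece.

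For the second statement \eqref{eq:ErrBdA}, I choose $q \simeq L \simeq M \simeq p$; the algebraic factor $M^{2-\alpha} q^6$ is absorbed by the exponential factor to yield $\exp(-bp)$. The degree of freedom count is essentially multiplicative: each decoupled problem contributes $\dim S^q_0(\Omega,\Tmin(\sqrt{\mu_i})) \lesssim L q^2$ by \eqref{eq:prop:thm-2.4.8-c}, there are $\calM_{geo} \sim M^2$ such problems, and hence $N \lesssim M^2 L q^2 \sim p^5$. Inverting this relation to express $p$ in terms of $N$ gives the stated rate $\exp(-b''' N^{1/5})$, which completes the proof.
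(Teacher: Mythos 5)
Your proposal is correct and follows essentially the same route as the paper: semidiscretization error via Lemma~\ref{lemma:semidiscretization-error}, reduction to $\sum_i\|U_i-\Pi^q_iU_i\|^2_{\mu_i,\Omega}$ via the diagonalization identity \eqref{eq:pythagoras}, the split $\mu_i\le 1$ versus $\mu_i>1$ handled by Propositions~\ref{prop:thm-2.4.8} and \ref{prop:eps>1} together with the bounds of Lemma~\ref{lemma:lambda}, and the same $N\sim M^2Lq^2\sim p^5$ count. The only (immaterial) difference is that you combine the two error contributions by Galerkin orthogonality rather than the triangle inequality, and note that $\|\cdot\|_{\mu_i,\Omega}$ is the energy norm of \eqref{eq:prop:thm-2.4.8-a}, not a balanced norm.
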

\begin{remark}
\label{remk:minmesh}
The approximation result (\ref{eq:ErrBdA}) still holds 
if the linear degree vector $\bmr$ in the definition of 
$\ue^{q,L}$ is replaced with a constant polynomial degree $r \sim M$. 
\eremk
\end{remark}
\begin{proof}
\emph{Step 1 (semidiscretization error):}
The analyticity of $f$ on $\overline{\Omega}$ implies $f \in {\mathbb H}^{-s+\nu}(\Omega)$
for any $\nu \in (0,1/2+s)$. Hence, by 
\eqref{eq:lemma:semidiscretization-error-10}, 
the semidiscretization error $\ue - G^{\bmr}_M \ue$ satisfies 
for suitable $b > 0$ independent of $M$
\begin{equation}
\label{eq:semidisc-err}
\|\ue - G^{\bmr}_M \ue\|_\C \lesssim e^{-b M}. 
\end{equation}
%

\noindent
\emph{Step 2 (representation of $G^{\bmr}_M \ue$)}: The semidiscrete approximation $G^\bmr_M\ue$
may be expressed in terms of the eigenbasis $\{ v_j \}_{j=1}^M$ 
in \eqref{eq:eigenvalue-problem}, \eqref{eq:eigenbasis-normal} as 
$$
 (G^\bmr_M \ue)(x',y) = \sum_{i=1}^{\calM_{geo}} v_i(y) U_i(x') \;,
$$
where the function $U_i$ solve by \eqref{eq:decoupled-problems} 
\begin{equation*}
\forall V \in H^1_0(\Omega)\; \colon\;
a_{\mu_i,\Omega}(U_i,V) = d_s v_i(0) \langle f,V\rangle. 
\end{equation*}
%
\noindent
\emph{Step 3}: 
For every $i=1,\ldots,\calM_{geo}$, and for every $q\in \bbN$,
approximate the $U_i\in H^1_0(\Omega)$ by its Galerkin approximation 
$U^{q,L}_i \in 
S^q_0(\Omega,\Tmin(\sqrt{\mu_i}))\subset H^1_0(\Omega)$.
That is, $U^{q,L}_i  = \Pi_i^q U_i $ is the 
$a_{\mu_i,\Omega}(\cdot,\cdot)$-projection of $U_i$ given by 
\eqref{eq:galerkin-for-vi}. It is the best approximation to $U_i$ in the 
corresponding energy norm and satisfies 
$$
\| U_i - \Pi_i^q U_i \|_{\mu_i,\Omega} 
= 
\min_{V\in  S^q_0(\Omega,\Tmin(\sqrt{\mu_i}))} \| U_i - V \|_{\mu_i,\Omega}
\;.
$$
By linearity of $\Pi_i^q$ and the analyticity of $f$ 
Propositions~\ref{prop:thm-2.4.8}, \ref{prop:eps>1} (depending on 
whether $\mu_i\leq 1$ or $\mu_i > 1$) and 
Lemma~\ref{lemma:lambda} 
\begin{equation*}
\| U_i - \Pi_i^q U_i \|_{\mu_i,\Omega} 
\lesssim |v_i(0)| q^6 \left(e^{-b \lambda q}+ \sqrt{\mu_i} e^{-b' L} \right)
\lesssim \calM_{geo}^{(1-\alpha)/2} q^6 \left(e^{-b \lambda q}+ \sqrt{\mu_i} e^{-b' L} \right). 
\end{equation*}

\noindent
\emph{Step 4 (Proof of \eqref{eq:ErrBdB})}:
With the approximations $U^{q,L}_i$ the approximation 
$\ue^{q,L}$ of $\ue$ is given by 
$$
\ue^{q,L}(x',y) := \sum_{i=1}^{\calM_{geo}} v_i(y) U^{q,L}_i(x') \in \HL(y^\alpha,\C).
$$
%
%
%
{}From \eqref{eq:pythagoras} we get for 
$Z = \ue_M - \ue^{q,L}  = \sum_{i=1}^{\calM_{geo}} v_i(y) (U_i(x) - U^{q,L}_i(x))$
$$
\| Z \|_{\C}^2 = a_\C(Z,Z) = \sum_{i=1}^{\calM_{geo}} \| U_i - U^q_i \|^2_{\mu_i,\Omega} 
\lesssim \calM_{geo}^{2-\alpha} q^{12} \left[ \exp(-2b\lambda q) + \exp(-2 b' L)
\right] 
\;.
$$
We note that $\calM_{geo} \sim M^2$. Combining this last estimate with 
(\ref{eq:semidisc-err}) yields the second estimate in \eqref{eq:ErrBdB}. 
The first estimate in \eqref{eq:ErrBdB} expresses the continuity of the trace
operator at $y = 0$. 
%
%

\noindent
\emph{Step 5 (complexity estimate)}: 
Using that $\calM_{geo} = O(M^2) = O(q^2)$ and the fact that 
$\operatorname{dim} S^q_0(\Omega, \Tmin(\sqrt{\mu_i}))
\leq C L q^2 = O(q^3)$ as well as the assumption 
$q \simeq L \simeq M \simeq p$, 
we arrive at a total problem size $N = O(q^5)$. Absorbing algebraic factors in the 
exponentially decaying one in \eqref{eq:ErrBdB} then yields 
\eqref{eq:ErrBdA}. 
\end{proof}
%
\subsection{Exponential Convergence II: Geometric Boundary Layer Meshes}
\label{S:ExpConvII}
In this section, we show that exponential convergence of a 
Galerkin method for \eqref{eq:ue-variational} can be achieved by a suitable 
choice of meshes $\calT$ and $\calG^M$ in the tensor product space 
$\V^{q,\bmr}_{h,M}(\calT,\calG^M)$ of \eqref{eq:TPFE}. That is, 
we place ourselves in {\bf Case B} in Section~\ref{S:diag}. For the discretization
in $y$, we select again the spaces 
$S^{\bmr}_{\{\Y\}}((0,\Y), \calG^M_{geo,\sigma})$ 
with $\Y \sim M$ and the linear degree vector $\bmr$ with slope $\slope$. 
The $hp$-FE discretization in $\Omega$
is based on the space $S^q_0(\Omega,\Tg)$
with the geometric boundary layer mesh $\Tg$ in Definition~\ref{def:bdylayer-mesh}. 
Recall that $G^{q,\bmr}_{h,M}\ue$ denotes the Galerkin projection of the solution 
$\ue $
onto $\V^{q,\bmr}_{h,M}(\Tg,\calG^M_{geo,\sigma}) 
= S^q_0(\Omega,\Tg) \otimes S^{\bmr}_{\{\Y\}}((0,\Y),\calG^M_{geo,\sigma})$. 
In Theorem~\ref{thm:ExpCnvII} below, we will 
focus on the case $q \simeq L \simeq M$ and the corresponding Galerkin projection 
is denoted $\ue^p_{TP}$. 
\begin{remark} 
\begin{enumerate}[(i)]
\item In contrast to the procedure 
of {\bf Case A} in the preceeding Section~\ref{S:ExpConvI}, precise knowledge
of the length scales $\sqrt{\mu_i}$ is not necessary. 
\item 
The diagonalization procedure may be carried out numerically
and results in decoupled reaction-diffusion problems,
affording parallel numerical solution.
\item
The linear degree vector $\bmr$ could be replaced with a constant
degree $r \sim M$, and Theorem~\ref{thm:ExpCnvII} will still hold. 
\eremk
\end{enumerate}
\end{remark}
For the tensor-product $hp$-FEM in $\C_\Y$ 
we also have an exponential convergence result: 
\begin{theorem}\label{thm:ExpCnvII}
Let $\Omega \subset {\mathbb R}^2$ be a curvilinear polygon with
$J$ vertices as described in Section~\ref{sec:GeoPrel}. Let $A$, $f$ satisfy
(\ref{eq:AnRegAc}) and let $A$ be uniformly symmetric positive definite on $\Omega$. 
Fix a slope $\slope > 0$.  
Set 
\begin{equation}
\label{eq:thm:ExpCnvII-10}
\Y \simeq L \simeq M \simeq n \simeq q =:p
\end{equation}
With these choices, 
denote by $\ue^p_{TP}$ the 
Galerkin projection of $\ue$ onto the tensor product $hp$-FE space 
$\V^{q,\bmr}_{h,M}(\Tg,\calG^M_{geo,\sigma}) 
= 
S^q_0(\Omega,\Tg) \otimes S^{\bmr}_{\{\Y\}}((0,\Y),\calG^M_{geo,\sigma})$. 

Then 
$N:= \operatorname{dim} \V^{q,\bmr}_{h,M}(\Tg,\calG^M_{geo,\sigma}) = O(p^6)$
and
there are constants $C$, $b$, $b' >0$ depending only on $\Omega$, $A$, $f$, 
the macro triangulation $\calT^\M$ underlying the geometric boundary
layer meshes $\Tg$, the slope $\slope$, and the implied constants in 
(\ref{eq:thm:ExpCnvII-10}) such that 
$$
\| u - \tr \ue^p_{TP} \|_{\Hs}
\lesssim
\|\nabla(\ue - \ue^p_{TP})\|_{L^2(y^\alpha,\C)}
\lesssim
\exp(-bp)
\simeq
\exp(-b' \sqrt[6]{N})
\;.
$$
\end{theorem}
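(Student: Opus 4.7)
The plan is to decompose the total error into a semidiscretization error in the extended variable $y$ and a spatial discretization error in $\Omega$, and then to exploit the diagonalization identity of Lemma~\ref{lemma:decoupled-problems} to reduce the latter to a sum of errors for the decoupled reaction--diffusion problems \eqref{eq:decoupled-problems}, each of which is controlled by the robust $hp$-FEM bounds of Section~\ref{S:approx-sing-perturb}. Concretely, I would split
\[
\|\nabla(\ue - \ue^p_{TP})\|_{L^2(y^\alpha,\C)}
\le
\|\nabla(\ue - G^{\bmr}_M \ue)\|_{L^2(y^\alpha,\C)}
+
\|\nabla(G^{\bmr}_M \ue - \ue^p_{TP})\|_{L^2(y^\alpha,\C)}.
\]
The analyticity assumption \eqref{eq:AnRegAc} gives $f \in \mathbb{H}^{-s+\nu}(\Omega)$ for some $\nu \in (0,s)$, so under $\Y \simeq M$ with linear slope $\slope$, Lemma~\ref{lemma:semidiscretization-error} bounds the first term by $C e^{-b M}$. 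Since $\ue^p_{TP}$ is the Galerkin projection onto $\V^{q,\bmr}_{h,M}(\Tg,\calG^M_{geo,\sigma})$, Lemma~\ref{lemma:decoupled-problems} (Case~B) yields
\[
\|G^{\bmr}_M \ue - \ue^p_{TP}\|_\C^2 = \sum_{i=1}^{\calM_{geo}} \|U_i - \Pi^q_i U_i\|^2_{\mu_i,\Omega}.
\]

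Next I would bound each summand using Propositions~\ref{prop:singular-approx} and~\ref{prop:singular-approx-eps>1}: \eqref{eq:decoupled-problems} matches \eqref{eq:sg-per} with $\varepsilon_i^2 = \mu_i$, $c \equiv 1$ and right-hand side $d_s v_i(0)\, f$, so by linearity the choice of right-hand side contributes only the prefactor $|v_i(0)| \le C M^{(1-\alpha)/2}$ from Lemma~\ref{lemma:lambda}. For $\mu_i \le 1$ I would invoke Proposition~\ref{prop:singular-approx}; the scale resolution condition $\sigma^L \le c_1 \sqrt{\mu_i}$ must hold uniformly in $i$, and the worst case comes from the smallest eigenvalue for which Lemma~\ref{lemma:lambda} gives $\sqrt{\mu_i} \gtrsim \slope^{-2} M^{-2} \sigma^{M}$. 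This reduces the scale resolution condition to $L \ge M + \mathcal{O}(\log M)$, compatible with $L \simeq M \simeq p$ provided the implicit constant in $L \simeq M$ is taken strictly larger than one. For $\mu_i > 1$ I would use Proposition~\ref{prop:singular-approx-eps>1} together with $\|V\|_{\mu_i,\Omega} \le \sqrt{\mu_i}\,\|V\|_{H^1(\Omega)}$ and the upper bound $\mu_i \le C M^2$. In either regime I obtain the uniform per-mode estimate
\[
\|U_i - \Pi^q_i U_i\|_{\mu_i,\Omega} \;\lesssim\; M^{(1-\alpha)/2}\, q^9\,\bigl[\sigma^{(1-\beta)n} + e^{-b q}\bigr].
\]

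Summing the $\calM_{geo} \sim M^2$ squared contributions produces a polynomial prefactor in $M$ and $q$ multiplying $\sigma^{(1-\beta)n} + e^{-b q}$, which when combined with the semidiscrete bound $C e^{-b M}$ and the coupling $\Y \simeq L \simeq M \simeq n \simeq q = p$ gives an overall rate of $\exp(-b p)$ after all algebraic factors are absorbed. The dimension count $N = \operatorname{dim} S^q_0(\Omega,\Tg) \cdot \operatorname{dim} S^{\bmr}_{\{\Y\}} \lesssim (L^2 q^2 + n q^2) \cdot M^2 = \mathcal{O}(p^6)$ converts this into $\exp(-b' \sqrt[6]{N})$. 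The bound on $\|u - \tr \ue^p_{TP}\|_{\mathbb{H}^s(\Omega)}$ then follows from the trace continuity \eqref{Trace_estimate} and $\tr \ue = u$.

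The main obstacle I anticipate is the uniform verification of the scale resolution condition \eqref{eq:L-eps-resolution} across the \emph{entire} range of discrete eigenvalues $\mu_i$, since the lower bound in \eqref{eq:RDevpBds} is exponentially small in $M$; this forces a careful coordination of the grading factor $\sigma$ of $\Tg$ with that of $\calG^M_{geo,\sigma}$ and a sufficiently generous implicit constant in $L \simeq M$. The remaining bookkeeping, namely tracking the $|v_i(0)|$ prefactor and the $\calM_{geo}$-fold summation, contributes only polynomial powers of $p$ that are absorbed into the exponential decay once $q \simeq M \simeq p$.
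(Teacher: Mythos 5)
Your overall architecture is exactly the paper's: split off the semidiscretization error via Lemma~\ref{lemma:semidiscretization-error}, use the Case~B error identity of Lemma~\ref{lemma:decoupled-problems} to reduce the spatial error to the sum $\sum_i \|U_i-\Pi^q_iU_i\|^2_{\mu_i,\Omega}$, bound each mode by Propositions~\ref{prop:singular-approx} and~\ref{prop:singular-approx-eps>1} with the $|v_i(0)|$ prefactor from Lemma~\ref{lemma:lambda}, and absorb all polynomial factors; the dimension count and the trace step are also as in the paper. The one place where you diverge is the step you yourself flag as the main obstacle: the uniform scale resolution condition $\sigma^L\le c_1\min_i\sqrt{\mu_i}$ with $\min_i\sqrt{\mu_i}\gtrsim M^{-2}\sigma^{M}$. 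You resolve it by demanding $L\ge M+\mathcal{O}(\log M)$, i.e., by taking the implied constant in $L\simeq M$ strictly larger than one. That does close the argument, but it proves a slightly restricted version of the statement: the theorem allows \emph{arbitrary} fixed implied constants in \eqref{eq:thm:ExpCnvII-10} (with $C,b,b'$ merely depending on them), and your argument fails for, say, $L=M$ exactly. The paper's resolution is different and avoids this restriction: since $S^{\bmr}_{\{\Y\}}((0,\Y),\calG^{M'}_{geo,\sigma})\subset S^{\bmr}_{\{\Y\}}((0,\Y),\calG^{M}_{geo,\sigma})$ for $M'<M$ (same slope), the minimization property of the Galerkin projection lets one bound the error of $\ue^p_{TP}$ by the approximation error from the \emph{coarser} tensor space with $M'=\lfloor\eta M\rfloor$ layers in $y$. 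For that subspace the smallest eigenvalue satisfies $\min_i\sqrt{\mu_i}\gtrsim (\eta M)^{-1}\sigma^{\eta M}$, so the resolution condition $\sigma^{L}\le c_1(\eta M)^{-1}\sigma^{\eta M}$ holds for any given proportionality $L\simeq M$ once $\eta$ is chosen small enough, while the semidiscretization error remains $e^{-b\eta M}$, still exponential in $p$. I recommend adopting that subspace trick (or explicitly weakening the statement to record your extra hypothesis on $L/M$); everything else in your proposal is sound.
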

\begin{proof}
The proof of this result is structurally 
along the lines of the proof of Theorem~\ref{thm:ExpCnvI}.
We omit details and
merely indicate how the scale resolution condition \eqref{eq:L-eps-resolution} 
is now accounted for.
We note that for fixed $\Y$ and 
$M' < M$, we have that the spaces $S^{\bmr}_{\{\Y\}}((0,\Y),\calG^{M'})$ and 
$S^{\bmr}_{\{\Y\}}((0,\Y),\calG^{M}_{geo,\sigma})$ satisfy 
$S^{\bmr}_{\{\Y\}}((0,\Y),\calG^{M'}_{geo,\sigma}) \subset 
S^{\bmr}_{\{\Y\}}((0,\Y),\calG^{M}_{geo,\sigma})$ (if the same slope $\slope$ 
for the linear degree vector is chosen). Hence, the Galerkin error 
for the approximation from the space 
$S^{q}_0(\Omega,\Tg) \otimes 
S^{\bmr}_{\{\Y\}}((0,\Y),\calG^{M}_{geo,\sigma}) $
is smaller than that from 
$S^{q}_0(\Omega,\Tg) \otimes 
S^{\bmr}_{\{\Y\}}((0,\Y),\calG^{M'}_{geo,\sigma}) $, and 
we therefore focus on bounding the approximation error for 
$S^{q}_0(\Omega,\Tg) \otimes 
S^{\bmr}_{\{\Y\}}((0,\Y),\calG^{M'}_{geo,\sigma}) $. 
We select $M'$ of the form $M'  = \lfloor \eta M\rfloor $ 
for some $\eta$ to be chosen below.
For ease of notation, we simply set $M' = \eta M$. 

By Lemma~\ref{lemma:lambda} we have that the smallest length scale 
of the singlarly perturbed problems for the space 
$S^q_0(\Tg) \otimes S^{\bmr}_{\{\Y\}}(\calG^{M'}_{geo,\sigma}) \}$ 
is 
$\min_i \mu_i \gtrsim {M'}^{-2} \sigma^{2M'}$
and that scale resolution condition \eqref{eq:L-eps-resolution} therefore 
reads 
\begin{equation}
\label{eq:scale-res-modified}
\sigma^L \leq c_1 \min_{i} \sqrt{\mu_i} 
\leq c_1 {M'}^{-1} \sigma^{M'}
\leq c_1 {\eta M}^{-1} \sigma^{\eta M}
\end{equation}
Since $L \simeq M$, we see that \eqref{eq:scale-res-modified}
can be satisfied for some fixed $c_1$ provided $\eta$ is suitably chosen. 
The approximation of $\ue$ from 
$S^q_0(\Tg) \otimes S^{\bmr}_{\{\Y\}}(\calG^{M'}_{geo,\sigma})$ 
then follows by arguments very similar to those of the proof of 
Theorem~\ref{thm:ExpCnvI}. 
\end{proof}
%
\section{Exponential Convergence of \BKp} 
\label{S:ExpConvIII}
The $hp$-since BK FEM is based on exponentially convergence, so-called 
``sinc'' quadratures 
to the Balakrishnan formula
\begin{equation}
\label{eq:dunford-integral}
\calL^{-s} = 
c_B \int_{-\infty}^\infty e^{-sy} (I + e^{-y}\calL)^{-1} \diff y \;.
\end{equation}
as described in Section~\ref{S:BalkrFrm} and \eqref{eq:BlkrRep} 
(see \cite{BonitoEtAl_FracSurv2017,BP:13,BoPascFracRegAcc2017} and the references there).
We briefly review the corresponding exponential convergence results in Section~\ref{S:SincAppr}.
The numerical realization of the sinc quadrature approximation of \eqref{eq:dunford-integral} 
leads again to the numerical solution of decoupled, local linear reaction-diffusion
problems in $\Omega$. These boundary value problems are again singularly perturbed.
Accordingly, 
we discuss two classes of $hp$-FE approximations for their numerical solution: 
In Section~\ref{sec:SincBKcaseA}, we discuss {\bf Case A}, which is based on 
the \emph{minimal boundary layer meshes} $\Tmin$ in $\Omega$.
In Section~\ref{sec:SincBKcaseB}, we detail {\bf Case B}, where \emph{geometric boundary
layer meshes} in $\Omega$ are employed.
The latter allow one common $hp$-FEM for all values of parameters arising from the 
sinc quadrature approximation of \eqref{eq:dunford-integral}.
\subsection{Sinc quadrature approximation}
\label{S:SincAppr}
The above integral (\ref{eq:dunford-integral}) 
can be discretized by so-called ``sinc'' quadratures 
(see, e.g., \cite{Stenger83,BoPascFracRegAcc2017}). 
To that end, 
we define for $K\in \bbN$
\begin{equation}\label{eq:SincPrm}
y_j := jK^{-1/2} = jk, \;\; |j| \leq K, \;\; k:=1/\sqrt{K}\;. 
\end{equation}
For $f\in L^2(\Omega)\subset \mathbb{H}^{-s}(\Omega)$ for every $0<s<1$, 
the (semidiscrete) 
\emph{sinc quadrature approximation $u_M$ of $u = \calL^{-s}f \in \Hs$}
as represented in \eqref{eq:dunford-integral} reads 
with $\eps_j := e^{-y_j/2} = e^{j/(2\sqrt{K})}$, $|j|\leq K$:
\begin{equation}\label{eq:Sincu}
u_K = Q^{-s}_k(\calL) f 
:=
c_B k \sum_{|j|\leq K} \eps_j^{2s} \left(I + \eps_j^2 \calL\right)^{-1} f 
\;.
\end{equation}
We note that for any $k$, we have that 
$Q^{-s}_k(\calL): 
\mathbb{H}^{-1}(\Omega)\rightarrow\mathbb{H}^1(\Omega)$ is a 
bounded linear map.  By the continuous embeddings 
$\mathbb{H}^1(\Omega) \subseteq \mathbb{H}^s(\Omega) 
\subseteq \mathbb{H}^{-s}(\Omega) \subseteq \mathbb{H}^{-1}(\Omega)$,
also 
$Q^{-s}_k(\calL): \mathbb{H}^{-s}(\Omega) \rightarrow \mathbb{H}^s(\Omega)$  
is a bounded linear map for any $0<s<1$.
The semidiscretization error 
$\calL^{-s} - Q^{-s}_k(\calL)$ is bound in \cite{BoPascFracRegAcc2017}:
\begin{proposition}
[\protect{\cite[Thm.~{3.2}]{BoPascFracRegAcc2017}}] 
\label{prop:SincErrBLP17}
For $f\in L^2(\Omega)$
and for every $0 \leq \beta < s$ with $0<s<1$ denoting the exponent
of the fractional diffusion operator in \eqref{fl=f_bdddom},
there exist constants $b$, $C>0$ (depending on $\beta$, $s$, $\Omega$, and $\calL$) 
such that for every $k>0$ as in \eqref{eq:SincPrm} holds,
with $D(\calL^\beta) = \mathbb{H}^{2 \beta}(\Omega)$
where $\mathbb{H}^{\sigma}(\Omega)$ is as in \eqref{def:Hs},
\begin{equation}\label{eq:SincErrBd}
\| (\calL^{-s} - Q^{-s}_k(\calL))f \|_{D(\calL^{\beta})}
\leq 
C\exp(-b/k) \| f \|_{L^2(\Omega)}
\;. 
\end{equation}
\end{proposition}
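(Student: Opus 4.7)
My plan is to reduce the operator estimate to a uniform scalar estimate over the spectrum of $\calL$, and then control the scalar error by separately handling the sinc quadrature on $\mathbb{R}$ (via analyticity of the symbol in a horizontal strip) and the discrete tail sum over $|j|>K$ (by elementary estimates).

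First I would exploit that both $\calL^{-s}$ and $Q^{-s}_k(\calL)$ are functions of $\calL$ with scalar symbols $\lambda^{-s}$ and $q_k(\lambda):=c_B k\sum_{|j|\leq K} e^{-sy_j}/(1+e^{-y_j}\lambda)$, respectively. Expanding $f = \sum_j f_j \varphi_j$ in the $L^2$-orthonormal eigenbasis of $\calL$ gives
\begin{equation*}
\|(\calL^{-s} - Q^{-s}_k(\calL))f\|_{D(\calL^\beta)}^2
= \sum_j \lambda_j^{2\beta}\bigl|\lambda_j^{-s}-q_k(\lambda_j)\bigr|^2 |f_j|^2.
\end{equation*}
Since $\|f\|_{L^2}^2 = \sum_j |f_j|^2$, it suffices to prove the uniform scalar bound $\sup_{\lambda\geq \lambda_1}\lambda^\beta |\lambda^{-s}-q_k(\lambda)| \lesssim e^{-b/k}$.

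For this scalar bound I would invoke the Balakrishnan identity $\lambda^{-s} = c_B\int_{\mathbb{R}} g_\lambda(y)\,\diff y$ with $g_\lambda(y):= e^{-sy}/(1+e^{-y}\lambda)$ and decompose
\begin{equation*}
\lambda^{-s}-q_k(\lambda)
= c_B\Bigl[\int_{\mathbb{R}} g_\lambda\,\diff y - k\!\sum_{j\in\mathbb{Z}}\!g_\lambda(y_j)\Bigr]
+ c_B k\!\sum_{|j|>K}\!g_\lambda(y_j),
\end{equation*}
i.e.\ into a sinc-quadrature-on-$\mathbb{R}$ contribution and a discrete tail. For the sinc part I would use that $g_\lambda$ extends holomorphically to the strip $|\Im y|<\pi$, since its poles lie at $y=-\log\lambda + i(2m+1)\pi$; the shift $u = y-\log\lambda$ gives $\|g_\lambda(\cdot+i\eta)\|_{L^1(\mathbb{R})} = \lambda^{-s}\|g_1(\cdot+i\eta)\|_{L^1(\mathbb{R})}$. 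The classical sinc/trapezoidal-rule error estimate (as in Stenger) for functions analytic in a strip then yields, for any fixed $d<\pi$,
\begin{equation*}
\Bigl|\int_{\mathbb{R}} g_\lambda\,\diff y - k\!\sum_{j\in\mathbb{Z}}\!g_\lambda(y_j)\Bigr| \lesssim \lambda^{-s} e^{-2\pi d/k},
\end{equation*}
so the $\lambda^\beta$-weighted sinc contribution is $\lesssim \lambda^{\beta-s}e^{-2\pi d/k} \lesssim e^{-2\pi d/k}$ for $\lambda\geq\lambda_1$. For the discrete tail at $j>K$ (so $y_j>1/k$), a case split at $\lambda = e^{1/k}$ (i.e.\ whether $y_j$ lies to the left or right of $\log\lambda$, where $e^{-y}\lambda = 1$) gives in both regimes $\lambda^\beta \cdot k\sum_{j>K}g_\lambda(y_j) \lesssim e^{-(s-\beta)/k}$; at $j<-K$ the elementary bound $(1+e^{-y_j}\lambda)^{-1}\leq e^{y_j}/\lambda$ yields $\lambda^\beta \cdot k\sum_{j<-K}g_\lambda(y_j)\lesssim \lambda^{\beta-1}e^{-(1-s)/k}\lesssim e^{-(1-s)/k}$.

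Combining, $\lambda^\beta |\lambda^{-s}-q_k(\lambda)|\lesssim e^{-b/k}$ uniformly on $[\lambda_1,\infty)$ with $b = \min(s-\beta,\,1-s,\,2\pi d)>0$, and the spectral-decomposition step above then delivers \eqref{eq:SincErrBd}. The main obstacle is the uniform-in-$\lambda$ control: the weight $\lambda^\beta$ coming from the $D(\calL^\beta)$ norm must be absorbed against the $\lambda^{-s}$-type decay of both the sinc error and the positive-$y$ tail, and it is precisely here that the hypothesis $\beta<s$ is essential—it keeps $\lambda^{\beta-s}$ bounded on $[\lambda_1,\infty)$ and, in the regime $\lambda>e^{1/k}$, converts it into the additional decay factor $e^{-(s-\beta)/k}$ that closes the positive-tail estimate.
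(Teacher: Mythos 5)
Your argument is correct, but it is worth noting that the paper offers no proof of this proposition at all: it is quoted verbatim (up to notation) from Bonito--Lei--Pasciak \cite[Thm.~3.2]{BoPascFracRegAcc2017}, so the only comparison to be made is with that reference. There the result is proved for general regularly accretive (possibly non-self-adjoint) operators, so the authors must work at the operator level with resolvent bounds along the contour; you instead exploit the self-adjointness of $\calL$ assumed in this paper to diagonalize, reducing everything to the uniform scalar estimate $\sup_{\lambda\ge\lambda_1}\lambda^\beta|\lambda^{-s}-q_k(\lambda)|\lesssim e^{-b/k}$. That reduction is legitimate here and makes the proof more elementary, at the price of generality. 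The core of your scalar estimate (strip analyticity of $y\mapsto e^{-sy}(1+e^{-y}\lambda)^{-1}$ in $|\Im y|<\pi$, the translation identity $g_\lambda(\cdot+\log\lambda)=\lambda^{-s}g_1(\cdot)$ giving the $\lambda^{-s}$-scaling of the sinc error, and the two exponential tail bounds $e^{-(s-\beta)/k}$ and $e^{-(1-s)/k}$ using $y_{\pm K}=\pm 1/k$) is exactly the mechanism underlying the cited proof, and each step checks out: in particular the case split at $\lambda=e^{1/k}$ for the positive tail does close, since for $\lambda\le e^{1/k}$ one has $\lambda^\beta k\sum_{j>K}e^{-sy_j}\lesssim e^{(\beta-s)/k}$ while for $\lambda>e^{1/k}$ both sub-sums are $\lesssim\lambda^{-s}$ and $\lambda^{\beta-s}\le e^{-(s-\beta)/k}$. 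One cosmetic slip: the poles of $g_\lambda$ sit at $y=+\log\lambda+i(2m+1)\pi$, not $-\log\lambda+i(2m+1)\pi$; this does not affect the argument since only the imaginary parts matter for the width of the strip of analyticity. You also implicitly use $k\le 1$ (which holds since $k=K^{-1/2}$, $K\in\bbN$) to keep factors like $k/(1-e^{-sk})$ bounded; it would be worth saying so.
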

\begin{remark}\label{remk:ChoicSincParm}
Sinc approximation formulas such as \eqref{eq:Sincu}
have a number of parameters which can be optimized in various ways.
The error bound in Proposition \ref{prop:SincErrBLP17} is merely
one particular choice (the so-called ``balanced'' choice of parameters), 
which is sufficient for the exponential sinc error bound \eqref{eq:SincErrBd}.
Other choices yield analogous (exponential) sinc error bounds, 
with possibly better numerical values for the constants $b$, $C>0$ 
in \eqref{eq:SincErrBd}. We point out that we make such a choice 
in our numerical examples in \eqref{eq:Sinc_practice} and refer
to \cite[Rem.~{3.1}]{BoPascFracRegAcc2017} for details.  
\eremk
\end{remark}
%
\subsection{$hp$-FE approximation in $\Omega$}
\label{S:SincHP}
%
The sinc approximation error bound \eqref{eq:SincErrBd} implies 
exponential convergence of the sinc quadrature sum \eqref{eq:Sincu}, 
which we write as
\begin{equation}\label{eq:SincSum}
Q^{-s}_k(\calL) f
= 
c_Bk \sum_{|j| \leq K}  \eps_j^{2s} w_j\;.
\end{equation}
Here, the $w_j \in H^1_0(\Omega)$ 
are solutions of the $2K+1$ reaction-diffusion problems
\begin{equation}\label{eq:RDPbwj}
\eps_j^2 \calL w_j + w_j = f\quad\mbox{in}\quad \Omega, \quad w_j|_{\partial \Omega} = 0\;,
\quad |j| \leq K\;.
\end{equation}
With the bilinear form
$a_{\varepsilon_j^2,\Omega}(\cdot,\cdot)$ from \eqref{eq:blfa-sg-per},
their variational formulations reads:
find $w_j \in H^1_0(\Omega)$ such that 
\begin{equation}\label{RDPBweak}
\forall v\in H^1_0(\Omega)\;\colon \quad 
a_{\eps_j^2,\Omega}(w_j,v) = (f,v)\;. 
\end{equation}
The reaction diffusion problems \eqref{eq:RDPbwj} 
are again of the type \eqref{eq:decoupled-problems} 
for which exponentially convergent $hp$-FE approximations were
presented in Section \ref{S:hpx}, from \cite{melenk02} and \cite{banjai-melenk-schwab19-RD}.
A \emph{fully discrete \BK approximation} is constructed 
by replacing 
$w_j$ in \eqref{eq:SincSum}, \eqref{eq:RDPbwj} by one of the $hp$-FE approximations
discussed in Section~\ref{S:hpx}. 
As in the case of the \EhpFEMp, 
also for the \BK one can distinguish
{\bf Case A}, in which 
each problem \eqref{RDPBweak} is discretized using a different 
$hp$-FE space, and 
{\bf Case B}, 
where all problems \eqref{RDPBweak} are discretized by the same 
$hp$-FE space in $\Omega$.
\subsubsection{Case A}
\label{sec:SincBKcaseA}
We discretize the singularly perturbed problems \eqref{RDPBweak} with 
length scales $\varepsilon_j$ using the spaces $\Tmin(\varepsilon_j)$. 
That is, denoting the resulting approximations generically by $w_j^{hp}$, 
defined by:  
for $|j|\leq K$, 
find $w_j^{hp} \in S^q_0(\Omega,\Tmin(\varepsilon_j))$
such that
\begin{equation}
\label{eq:BK-CaseA}
\forall v \in S^q_0(\Omega, \Tmin(\varepsilon_j))\, \colon\,
a_{\eps_j^2,\Omega}(w_j^{hp},v) = \langle f,v\rangle  \;. 
\end{equation}
The $hp$-FE approximations $w_j^{hp}$ are well-defined.
Replacing in \eqref{eq:SincSum} the $w_j$ by their $hp$-FE approximations,
we obtain the \BK  approximation 
of the (inverse of) the fractional diffusion operator $\calL^s$:
\begin{equation}\label{eq:SincSumhp}
Q^{-s}_k(\calL_{hp}) f
:= 
c_Bk \sum_{|j| \leq K}  \eps_j^{2s} w^{hp}_j\;.
\end{equation}

To bound the error $\| u - Q^{-s}_k(\calL_{hp}) f \|_{\mathbb{H}^s(\Omega)}$, we write
\begin{align*}
\| u - Q^{-s}_k(\calL_{hp}) f \|_{\mathbb{H}^s(\Omega)}
& \leq  \displaystyle
\| (\calL^{-s} - Q^{-s}_k(\calL)) f  \|_{\mathbb{H}^s(\Omega)}
+ 
\| Q^{-s}_k(\calL - \calL_{hp}) f  \|_{\mathbb{H}^s(\Omega)} 
\;.
\end{align*}
For the first term, the sinc approximation error,
we use the error bound \eqref{eq:SincErrBd} with $\beta = s/2$. 
Using 
$D(\calL^{s/2}) = \mathbb{H}^s(\Omega)$
for $0<s<1$, 
we obtain from \eqref{eq:SincErrBd} and from $k \simeq K^{-1/2}$
(see \eqref{eq:SincPrm}) the bound 
\begin{equation}\label{eq:SnchpT1}
\| (\calL^{-s} - Q^{-s}_k(\calL)) f  \|_{\mathbb{H}^s(\Omega)}
\leq C\exp(-b\sqrt{K}) \| f\|_{L^2(\Omega)}
\;.
\end{equation}
To bound the second term, definition \eqref{eq:SincSum}
and the triangle inequality imply
\begin{equation}\label{eq:SnchpSumBd}
\| Q^{-s}_k(\calL - \calL_{hp}) f  \|_{\mathbb{H}^s(\Omega)}
\lesssim
k
\sum_{|j| \leq K }
\eps_j^{2s} \| w_{j} - w_{j}^{hp} \|_{\mathbb{H}^s(\Omega)}
\;.
\end{equation}

To invoke the $hp$-error bound \eqref{eq:thm:singular-approx-10}
with the norm \eqref{eq:pythagoras},
we use the interpolation inequality
\begin{equation}\label{eq:InterpIneq}
\forall 0<s<1 \;\exists C_s>0 \;\forall w\in H^1_0(\Omega):
\quad 
\| w \|_{\mathbb{H}^s(\Omega)}
\leq C_s 
\| w \|^{1-s}_{L^2(\Omega)} 
\| \nabla w \|^s_{L^2(\Omega)} \;.
\end{equation}
We apply this to each term in  \eqref{eq:SnchpSumBd} and,
using the definition \eqref{eq:pythagoras} of the norm 
$\| w \|_{\eps^2,\Omega}$,
and $\| w \|_{\eps^2}^2 
:= \eps^2 \| \nabla w \|^2_{L^2(\Omega)} + \| w \|^2_{L^2(\Omega)} 
\simeq 
\left( \eps \| \nabla w \|_{L^2(\Omega)} + \| w \|_{L^2(\Omega)}  \right)^2$ 
for all $\eps \geq 0$,
arrive at 
\begin{align*}
\| Q^{-s}_k(\calL - \calL_{hp}) f  \|_{\mathbb{H}^s(\Omega)}
&\lesssim  \displaystyle 
k
\sum_{|j|\leq K} 
\eps_j^s 
\| w_{j} - w_{j}^{hp} \|^{1-s}_{L^2(\Omega)} 
\left(
\eps_j 
\| \nabla(w_{j} - w_{j}^{hp}) \|_{L^2(\Omega)}
\right)^s 
\\
&\lesssim  \displaystyle 
k
\sum_{|j|\leq K}
\eps_j^{s}
\|  w_{j} - w_{j}^{hp} \|_{\eps_j^2}
\;.
\end{align*}
We split the summation indices 
as $I_+ \cup I_-$, i.e.,  
$I_+ := \{|j|\leq K\} \cap \{j>0\}$ 
and 
$I_- := \{|j|\leq K\} \cap \{ j\leq 0\}$.

As $\eps_j =\exp(j/(2\sqrt{K}))$, 
$j\in I_-$ implies $0<\eps_j \leq 1$ 
and 
$j\in I_+$ to $1<\eps_j \leq \exp(\sqrt{K}/2)$.
With Proposition~\ref{prop:eps>1}, 
we estimate the sum over $j\in I_+$ according to 
\begin{align}
\nonumber 
\sum_{j\in I_+} \eps_j^{s} \|  w_{j} - w_{j}^{hp} \|_{\eps_j^2} 
&\lesssim \displaystyle 
q^6 \sum_{j\in I_+} \eps_j^{s-1} \left(\exp(-bq)+\exp(-b'L)\right)
\\
\nonumber 
& = \displaystyle 
q^6 \left(\exp(-bq)+\exp(-b'L)\right) \sum_{j\in I_+} \exp(j(s-1)/(2\sqrt{K}))
\\
\label{eq:I_+}
&\lesssim \displaystyle 
\sqrt{K} q^6 \left( \exp(-bq)+\exp(-b'L) \right) 
\;.
\end{align}
We estimate the sum over $j\in I_-$ (i.e., $0<\eps_j \leq 1$)
with Proposition~\ref{prop:thm-2.4.8} 
\begin{align}
\nonumber 
\displaystyle
\sum_{j\in I_-} \eps_j^{s} \|  w_{j} - w_{j}^{hp} \|_{\eps_j^2} 
&\lesssim \displaystyle 
 q^6 \sum_{j\in I_-} \eps_j^{s} \left[ \exp(-b\lambda q ) + \eps_j \exp(-b'L) \right]
\\
\nonumber 
&\lesssim \displaystyle 
	q^6 \left[ \exp(-b\lambda q ) + \exp(-b'L) \right] \sum_{j=1}^{K} \exp(-sj/(2\sqrt{K}))
\\
&\simeq  \displaystyle 
	\sqrt{K} q^6 \left[ \exp(-b\lambda q ) + \exp(-b'L) \right]
\;.
\label{eq:I_-}
\end{align}
We select $L\simeq q$ (i.e., the number $L$ of mesh-layers 
proportional to the polynomial degree $q\geq 1$) 
with proportionality constant independent of $\eps_j$. 
Furthermore, we note $k = 1/\sqrt{K}$ and select $K \simeq q$ so that 
%
%
%
\begin{equation}\label{eq:SnchpT2}
\| Q^{-s}_k(\calL - \calL_{hp}) f  \|_{\mathbb{H}^s(\Omega)}
\lesssim q^6 \exp(-b\lambda q).  
\end{equation}
Combining the error bounds \eqref{eq:SnchpT1} and \eqref{eq:SnchpT2},
and suitably adjusting the constant $b>0$ in the exponential bounds, 
we arrive at 
\begin{equation}\label{eq:SnchpErrBd}
\| u - Q^{-s}_k(\calL_{hp}) f \|_{\mathbb{H}^s(\Omega)}
\lesssim \exp(-bq) \;.
\end{equation}
Given that the approximation $u_{K,hp}$ involves the solution
of $O(K) = O(q^2)$ reaction-diffusion problems, each of which requires
$O(q^3)$ DOF, the error bound \eqref{eq:SnchpErrBd}
in terms of the total number of degrees of freedom $N_{DOF}$
reads
\begin{equation}\label{eq:SnchpErrBd2}
\| u - Q^{-s}_k(\calL_{hp}) f \|_{\mathbb{H}^s(\Omega)}
\leq 
C \exp(-b\sqrt[5]{N_{DOF}}) 
\end{equation}
with constants $b$, $C>0$ that are independent of $N_{DOF}$. 
We have thus shown: 
\begin{theorem}
\label{thm:BKminMesh}
Let $\Omega$ be a curvilinear polygon as defined in Section~\ref{sec:GeoPrel}, 
let $A$, $f$ satisfy \eqref{eq:AnRegAc}, and let $A$ be uniformly symmetric positive 
definite on $\Omega$. 
Let $u$ be the solution to \eqref{fl=f_bdddom}, and let 
its fully discrete approximation be given by the \BK 
approximation \eqref{eq:SincSum} in conjunction with  
the $hp$-FE approximation of $w_j^{hp}$ in \eqref{eq:BK-CaseA} 
with the $hp$-FE spaces $S^q_0(\Omega, \Tmin(\varepsilon_j))$
on the minimal boundary layer meshes $\Tmin(\varepsilon_j)$.
Choose further the parameters $q \simeq L \simeq K$ 
and 
let $N = \sum\limits_{|j| \leq K}\operatorname{dim} S^q_0(\Omega, \Tmin(\varepsilon_j))$ 
denote the total number of degrees of freedom.

Then there exists a $\lambda_0$ 
(depending on $\Omega$, $A$, $c$, and the parameters characterizing the mesh family $\Tmin$)
such that 
for any $\lambda \in (0,\lambda_0]$ there are constants $C$, $b > 0$ (depending additionally
on the implied constants in $q \simeq L \simeq K$) such that 
$$
\|\calL^{-s} f - Q_k^{-s}(\calL_{hp}) f\|_{\Hs} \leq C \exp(-b \sqrt[5]{N}).
$$
\end{theorem}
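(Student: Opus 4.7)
The plan is to bound the total error by splitting it into the sinc quadrature error and the $hp$-FE discretization error, and to control each contribution separately. Writing
\begin{equation*}
\|\calL^{-s} f - Q_k^{-s}(\calL_{hp}) f\|_{\Hs}
\leq
\|(\calL^{-s} - Q_k^{-s}(\calL)) f\|_{\Hs}
+
\|Q_k^{-s}(\calL - \calL_{hp}) f\|_{\Hs},
\end{equation*}
the first term is handled directly by Proposition~\ref{prop:SincErrBLP17} with $\beta = s/2$, using the identification $D(\calL^{s/2}) = \Hs$ and $k \simeq K^{-1/2}$, to yield an exponential bound of the form $C \exp(-b\sqrt{K}) \|f\|_{L^2(\Omega)}$.

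For the second term I would expand $Q_k^{-s}(\calL - \calL_{hp})f$ according to its defining sum \eqref{eq:SincSum}, apply the triangle inequality, and bound each summand $\|w_j - w_j^{hp}\|_{\Hs}$ by interpolating between $L^2(\Omega)$ and $H^1_0(\Omega)$ via the standard inequality \eqref{eq:InterpIneq}. This converts the sum into one involving the singularly perturbed energy norms $\|\cdot\|_{\eps_j^2,\Omega}$, which is precisely the norm in which Propositions~\ref{prop:thm-2.4.8} and~\ref{prop:eps>1} give robust exponential convergence on the meshes $\Tmin(\eps_j)$.

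The main technical step, and the one where care is needed, is the splitting of the sinc index set into $I_+ := \{0 < j \leq K\}$ (where $\eps_j > 1$, so Proposition~\ref{prop:eps>1} applies) and $I_- := \{-K \leq j \leq 0\}$ (where $0 < \eps_j \leq 1$, so Proposition~\ref{prop:thm-2.4.8} applies). In each case the weight $\eps_j^s$ coming from the interpolation inequality must tame the growth/decay of the FE error bound: for $I_+$ one uses $\eps_j^{s-1}$ with $s-1<0$ giving a geometric sum in $j$, while for $I_-$ one uses $\eps_j^s$ with $s>0$ giving a similar convergent geometric sum. Both sums are of size $O(\sqrt{K})$, so that, absorbing $k = 1/\sqrt{K}$, the FE contribution is bounded by $C q^6 [\exp(-b\lambda q) + \exp(-b' L)]$.

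Finally, I would choose the parameters in the balanced regime $q \simeq L \simeq K$, absorb algebraic factors $q^6$ into the exponentials, and combine with the sinc bound to obtain $C\exp(-bq)$. The complexity estimate follows by noting that there are $O(K) = O(q^2)$ decoupled reaction-diffusion problems, each with $\operatorname{dim} S^q_0(\Omega, \Tmin(\eps_j)) = O(L q^2) = O(q^3)$ degrees of freedom, for a total $N = O(q^5)$, which converts $\exp(-bq)$ into $\exp(-b\sqrt[5]{N})$. No single step is particularly hard in isolation; the one requiring the most attention is the index-splitting and verification that the $\eps_j^s$ weights yield summable geometric series uniformly in $K$, so that the FE error does not inherit a bad prefactor from the large range of scales $\eps_j$.
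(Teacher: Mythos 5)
Your proposal follows the paper's own proof essentially step for step: the same splitting into sinc-quadrature and FE errors, the same use of Proposition~\ref{prop:SincErrBLP17} with $\beta=s/2$ and $D(\calL^{s/2})=\Hs$, the same interpolation inequality \eqref{eq:InterpIneq} to pass to the $\eps_j$-weighted energy norms, and the same $I_\pm$ index splitting with $O(\sqrt{K})$ geometric sums handled by Propositions~\ref{prop:thm-2.4.8} and~\ref{prop:eps>1}. The only point to watch is the calibration of $K$: for the sinc error $\exp(-b\sqrt{K})$ to match the FE error $\exp(-b\lambda q)$ and for the count of $O(K)=O(q^2)$ subproblems (hence $N=O(q^5)$ and the $\sqrt[5]{N}$ rate) to be consistent, one needs $\sqrt{K}\simeq q$, i.e.\ $K\simeq q^2$ --- a tension your write-up inherits verbatim from the paper's own ``$K\simeq q$''.
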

\subsubsection{Case B}
\label{sec:SincBKcaseB}
Instead of approximating the problems \eqref{RDPBweak} from 
individual spaces, one may approximate them from the same $hp$-FE space in $\Omega$.
Specifically, we define the approximations
$w_j^{hp}$ by: 
Find $w_j^{hp} \in S^q_0(\Omega,\Tg)$ such that 
\begin{equation}
\label{eq:BK-CaseB}
\forall v \in S^q_0(\Omega, \Tg)\, \colon\,
a_{\eps_j^2,\Omega}(w_j^{hp},v) = \langle f,v\rangle  \;. 
\end{equation}
%
\begin{theorem} \label{thm:BKTPMesh}
Let $\Omega$ be a curvilinear polygon as defined in Section~\ref{sec:GeoPrel}
and assume that $A$, $f$ satisfy \eqref{eq:AnRegAc} and that $A$ is uniformly symmetric positive definite. 
Let $u$ be solution to \eqref{fl=f_bdddom}, and 
let its discrete approximation 
$Q_k^{-s}(\calL_{hp}) f$ 
be given by 
\eqref{eq:SincSum} in conjunction with 
\eqref{eq:BK-CaseB}. Fix $c_1 > 0$.
Let $K \simeq q \simeq L  = n$ 
and let $N = (2K+1)\operatorname{dim}  S^q_0(\Omega, \Tg) \sim q^6 $ 
denote the total number of degrees of freedom. 

Then, under the scale resolution condition
\begin{equation}
\label{eq:scale-res-BK} 
\sigma^L \leq c_1 e^{-K/2}
\end{equation}
there are constants 
$C$, $b >0$ (depending on $\Omega$, $A$, $f$, $c_1$, $\sigma$, and the 
analyticity properties of the macro triangulation)
such that
\begin{equation*}
\|\calL^{-s} f - Q_k^{-s}(\calL_{hp} f)\|_{\Hs} \leq C \exp(-b \sqrt[6]{N}).
\end{equation*}
\end{theorem}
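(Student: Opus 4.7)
The plan is to proceed in structural parallel to the proof of Theorem \ref{thm:BKminMesh} (the Case A result), splitting the error into a sinc quadrature part and an $hp$-FE discretization part, but now exploiting the ability of a single geometric boundary layer mesh $\Tg$ to resolve all of the scales $\eps_j = e^{-j/(2\sqrt{K})}$ simultaneously under the condition \eqref{eq:scale-res-BK}. The main obstacle is verifying that one common mesh $\Tg$ suffices: the smallest singular perturbation parameter occurring in \eqref{eq:RDPbwj} is $\eps_{-K} = e^{-K/2}$, and the scale resolution condition \eqref{eq:L-eps-resolution} of Proposition \ref{prop:singular-approx} must hold uniformly in $|j| \le K$, which is exactly the content of \eqref{eq:scale-res-BK}.

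First, I would split by the triangle inequality
$\|\calL^{-s}f - Q_k^{-s}(\calL_{hp})f\|_{\Hs}
\le \|(\calL^{-s} - Q_k^{-s}(\calL))f\|_{\Hs} + \|Q_k^{-s}(\calL - \calL_{hp})f\|_{\Hs}.$
For the first term, Proposition \ref{prop:SincErrBLP17} with $\beta = s/2$ and $D(\calL^{s/2}) = \Hs$ yields, in view of $k = 1/\sqrt{K}$, an exponential sinc bound $\lesssim \exp(-b\sqrt{K})\|f\|_{L^2(\Omega)}$, as in \eqref{eq:SnchpT1}.

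For the $hp$-FE part, I would reuse verbatim the reductions of Section \ref{sec:SincBKcaseA}: expand the sum \eqref{eq:SincSum}, apply the interpolation inequality \eqref{eq:InterpIneq} termwise, and collect
$\|Q_k^{-s}(\calL - \calL_{hp})f\|_{\Hs}
\lesssim k \sum_{|j|\le K} \eps_j^{s}\,\|w_j - w_j^{hp}\|_{\eps_j^2,\Omega}.$
Splitting the summation indices into $I_+ = \{0 < j \le K\}$ (where $\eps_j > 1$) and $I_- = \{-K \le j \le 0\}$ (where $0 < \eps_j \le 1$), the estimate over $I_+$ follows directly from Proposition \ref{prop:singular-approx-eps>1} applied on the common mesh $\Tg$, reproducing bound \eqref{eq:I_+}. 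The estimate over $I_-$ is the point where the scale resolution hypothesis is used: since for each $j \in I_-$ the corresponding singular perturbation parameter satisfies $\eps_j \ge \eps_{-K} = e^{-K/2}$, the hypothesis \eqref{eq:scale-res-BK} implies $\sigma^L \le c_1 \eps_j$ uniformly in $j \in I_-$, so Proposition \ref{prop:singular-approx} applies with the \emph{same} mesh $\Tg$ for every $j \in I_-$, yielding a bound of the same shape as \eqref{eq:I_-}. The geometric factors $\sum_{j \in I_\pm}\exp(\pm cj/\sqrt{K})$ contribute only an algebraic $O(\sqrt{K})$ prefactor.

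Combining the two bounds, absorbing the algebraic factors into the exponentials, and invoking the simultaneous choice $K \simeq L = n \simeq q$ (which is compatible with \eqref{eq:scale-res-BK} for $c_1$ fixed, since $\sigma^L \lesssim e^{-K/2}$ holds as soon as $L \ge cK$ for a suitable $c$), I obtain
$\|\calL^{-s} f - Q_k^{-s}(\calL_{hp})f\|_{\Hs} \lesssim \exp(-b' q).$
The total complexity is $N = (2K+1)\dim S^q_0(\Omega,\Tg)$; using the bound \eqref{eq:thm:singular-approx-20} together with $K \simeq L = n \simeq q$ and the estimate $\dim S^q_0(\Omega,\Tg) \lesssim L^2 q^2 + n q^2 \lesssim q^5$ delivers $N \lesssim q^6$, so that $q \gtrsim \sqrt[6]{N}$ and the claimed bound $\exp(-b\sqrt[6]{N})$ follows.
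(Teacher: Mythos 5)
Your proposal is correct and follows essentially the same route as the paper, whose own proof simply refers back to the Case A argument of Theorem~\ref{thm:BKminMesh} and observes, exactly as you do, that the scale resolution condition \eqref{eq:scale-res-BK} guarantees $\sigma^L \le c_1\eps_j$ for all $j\in I_-$ (since $e^{-K/2}=\min_j\eps_j$), so that Proposition~\ref{prop:singular-approx} applies on the single common mesh $\Tg$, with Proposition~\ref{prop:singular-approx-eps>1} handling $j\in I_+$. The only blemish is the harmless over-count $\dim S^q_0(\Omega,\Tg)\lesssim L^2q^2+nq^2\lesssim q^5$, which should read $\lesssim q^4$; this does not affect the conclusion, since $N\lesssim q^6$ still holds and only that direction is needed to pass from $\exp(-bq)$ to $\exp(-b'\sqrt[6]{N})$.
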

\begin{proof}
The proof follows the arguments of Theorem~\ref{thm:BKminMesh}. 
Since $e^{-K/2} =\min_{j} \varepsilon_j$ is the smallest scale,  
the scale resolution condition (\ref{eq:scale-res-BK}) ensures that 
Proposition~\ref{prop:singular-approx} is applicable. 
\end{proof}
%
\section{Numerical experiments} 
\label{S:NumExp}
We consider the problem \eqref{fl=f_bdddom} with 
diffusion coefficient $A = I$, \ie $\mathcal{L}^s = (-\Delta)^s$.  
The domain $\Omega$ is chosen as either the unit square $\Omega_1 = (0,1)^2$, the  so-called $L$-shaped 
polygonal domain $\Omega_2 \subset \R^2$ determined by the vertices
$\{ (0,0), (1,0), (1,1), (-1,1), (-1,-1), (0,-1)\}$,
or the square domain with a slit $\Omega_3 = (-1,1)^2 \setminus (-1,0] \times \{0\}$. 
As we are in particular interested in smooth, but 
possibly non-compatible data $f$ in all the numerical examples we take
\begin{equation}
  \label{eq:nonsmooth_f}
  f(x_1,x_2) \equiv 1 \qquad \mbox{in } \Omega.
\end{equation} 
Notice that, in this case, $f$ is analytic on $\overline{\Omega}$ 
but $f\in \Ws$ only for $ s > 1/2 $ due to boundary incompatibility
(cf.\ Remark~\ref{remk:compatibility}).
The exact solution is not known, so that
the error is estimated numerically with reference to
an accurate numerical solution. 
The error measure is always the functional
\begin{equation}
  \label{eq:err_defn}
  e(\tilde u) = \left|d_s\int_\Omega f (u^{\text{fine}} - \tilde u )\diff x'\right|^{1/2},
\end{equation}
where $u^{\text{fine}}$ is the numerical solution obtained on a fine mesh. 
Note that for the Galerkin method on the cylinder $\C$ 
(i.e., \EhpFEM in {\bf Case B})
this error measure is equivalent to the energy norm if $u^{\text{fine}}$ is replaced by the exact solution $u$:
\[
  \|u-\tr{\ue^p}\|^2_{\Hs}
  \lesssim \|\nabla (\ue- \ue^p)\|^2_{L^2(y^\alpha,\C)}
  = d_s\int_\Omega f (u - \tr{\ue^p} )\diff x',
  \]
  where $\ue^p$ denotes the discrete solution in $\C_\Y$.

In Figure~\ref{fig:meshes} we show examples of the meshes used for the three domains. 
These are constructed using the Netgen/NGSolve package \cite{netgen1}. 
For the square domain $\Omega = \Omega_1$ the resulting mesh 
is the geometric boundary layer mesh $\calT^{L,L}_{geo, \sigma_x}$  
with $L = 4$ and $\sigma_x = 1/4$. 
The same parameters are used in Netgen/NGSolve to construct the meshes for the other two domains, 
with the resulting meshes diverging from the strict definition of $\calT^{L,L}_{geo, \sigma_2}$ 
near the re-entrant corners since these meshes are not constructed 
using mesh patches but instead by applying directly geometric refinement 
towards edges and vertices.
Nevertheless we denote these meshes also by $\calT^{L,L}_{geo, \sigma_x}$ 
and make use of the finite element spaces $S^q_0(\Omega,\calT^{L,L}_{geo,\sigma_x})$. 

  \begin{figure}[th]
    \centering
    \includegraphics[width=0.32\textwidth]{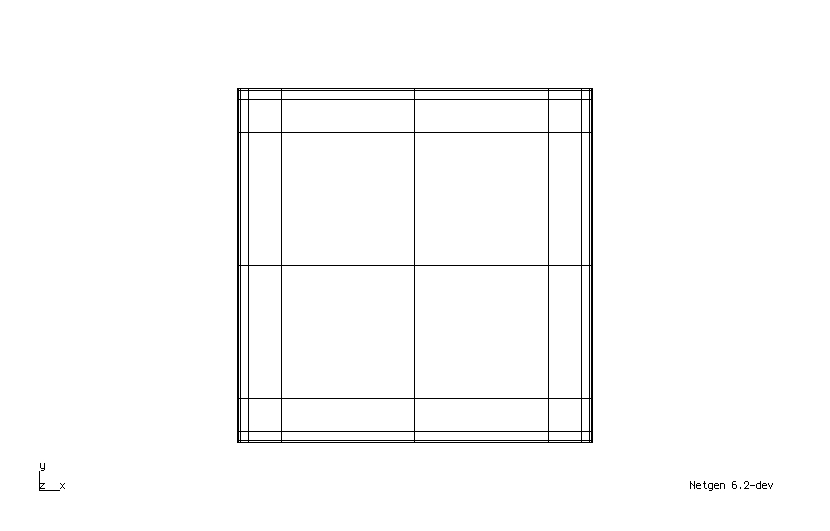}
    \includegraphics[width=0.32\textwidth]{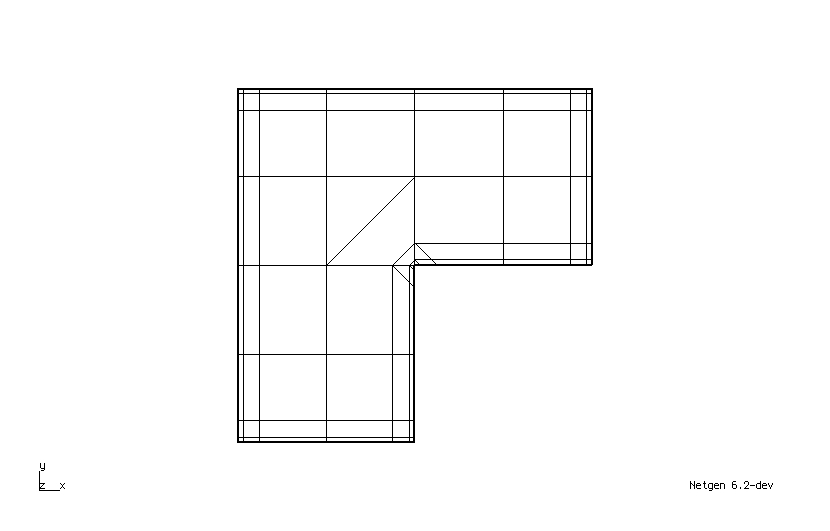}
        \includegraphics[width=0.32\textwidth]{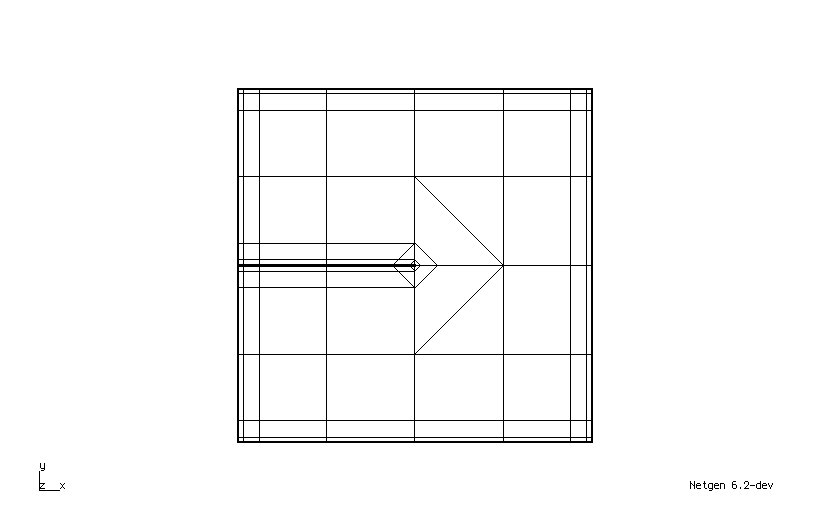}
    \caption{Examples of geometric boundary layer meshes generated 
             by Netgen/NGSolve \cite{netgen1} that are used for the three domains 
             $\Omega = \Omega_j$, $j = 1,2,3$, ordered from left to right. 
    \label{fig:meshes}}
  \end{figure}

  Given a polynomial order $p \geq 1$, in both approaches the finite element space in $\Omega$ is  $S^q_0(\Omega,\calT^{L,L}_{geo,\sigma_x})$ with uniform polynomial degree $q = p$, number of levels $L = p$ and $\sigma_x = 1/4$. Next, we describe the parameters used in the $hp$-FEM on $(0,\Y)$ and the quadrature in the Balakrishnan formula.

  For the extended problem, on the geometric mesh $\calG^M_{geo,\sigma_y}$ in $(0,\Y)$ as defined in Section \ref{S:GeoMes0Y} we use FE-spaces $S_{\{\Y\}}^{\bmr}((0,\Y),\calG^M_{geo,\sigma_y})$.  Given a  polynomial degree $p \geq 1$, in the definition of these spaces we use
$\Y = \tfrac12 p$, $\sigma_y = 1/4$ and $M = \operatorname{round}(0.79\, p/s)$\footnote{{The choice $M = \operatorname{round}(0.79\, p/s)$ resulted from 
equilibrating (an upper bound) for the semidiscretization error associated 
with  $[y_0,y_1]$ and $[y_{M-1},y_M]$}}, and a uniform degree vector $\bmr = (p,\dots,p)$. 

For simplicity in the analysis of the sinc quadrature, 
we used a symmetric approximation \eqref{eq:Sincu}. 
For the numerical experiments in order to obtain a more efficient scheme 
we have followed \cite{BoPascFracRegAcc2017} to define the quadrature as 
 \begin{equation}
   \label{eq:Sinc_practice}
 Q_k^{-s}(\mathcal{L}) f := \frac{k \sin (\pi s)}{\pi}\sum_{\ell = -K_1}^{K_2} e^{-sy_\ell} ( I + e^{-y_\ell}\mathcal{L})^{-1} f,
\end{equation}
with $y_\ell =\ell k$ and the number of quadrature points chosen as
\[
K_1 =\left \lceil \frac{\pi^2}{2(1-s)k^2}\right\rceil\;, \qquad
K_2 = \left\lceil \frac{\pi^2}{s k^2}\right\rceil\;.
\]
For the given polynomial order $p \geq 1$, we set $k = \frac{4}{3}p^{-1}$.
 
%

We now compare the convergence of the two schemes. We plot the error against the polynomial degree $p$ and against $N_{\text{ls}}^{1/2}$, where $N_{\text{ls}}$ is the number of linear systems that need to be solved. The convergence curves for the square domain $\Omega_1$ are shown in Figure~\ref{fig:square_conv}, for the L-shaped domain $\Omega_2$  in Figure~\ref{fig:lshape_conv}, and for the slit domain $\Omega_3$ in Figure~\ref{fig:slit_conv}. 
For all three domains we clearly see exponential convergence as the polynomial order is increased. 
Also, the \EhpFEM requires significantly fewer linear systems 
to be solved to achieve the same accuracy as the \BK.
We should, however, also note that the eigenvalue problem \eqref{eq:eigenvalue-problem} becomes  ill-conditioned for increasing $p$ and much higher accuracy than the one shown in the above figures cannot be obtained using our approach for the extension problem. No such accuracy limitations could be seen for the sinc approach.

\begin{figure}[th]
  \centering
  \includegraphics[width=0.45\textwidth]{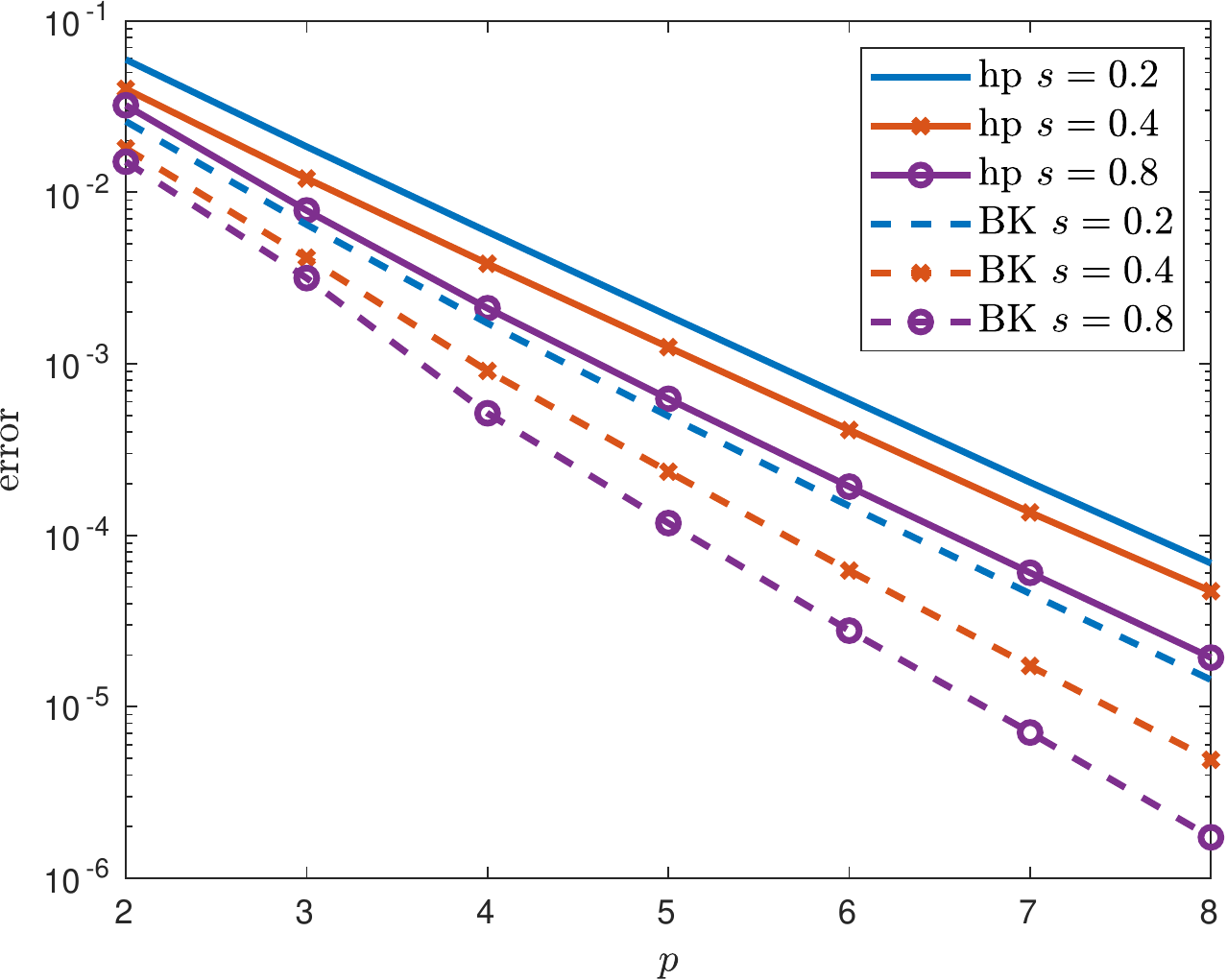}
  \includegraphics[width=0.45\textwidth]{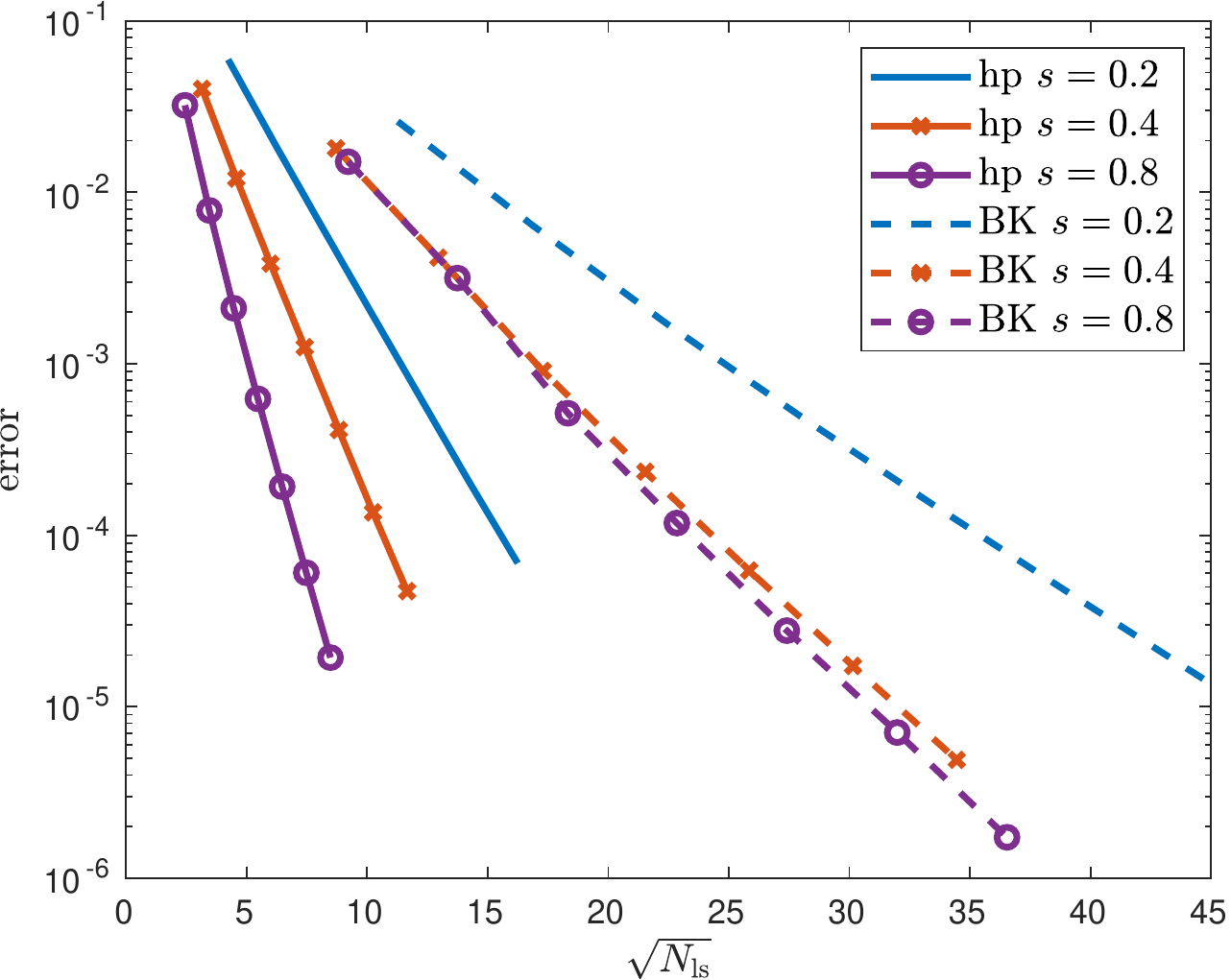}
  \caption{Error convergence of the 
\EhpFEM and the \BK for the domain $\Omega_1$, 
depicted versus the polynomial degree $p$ and $N_{\text{ls}}^{1/2}$, 
where $N_{\text{ls}}$ denote the number of linear systems that need to be solved. 
Solid lines correspond to \EhpFEM and dashed lines to the \BKp.
Results for $s = 0.2$, $s = 0.4$ and $s = 0.8$ are shown.}
  \label{fig:square_conv}
\end{figure}

\begin{figure}[th]
  \centering
  \includegraphics[width=0.45\textwidth]{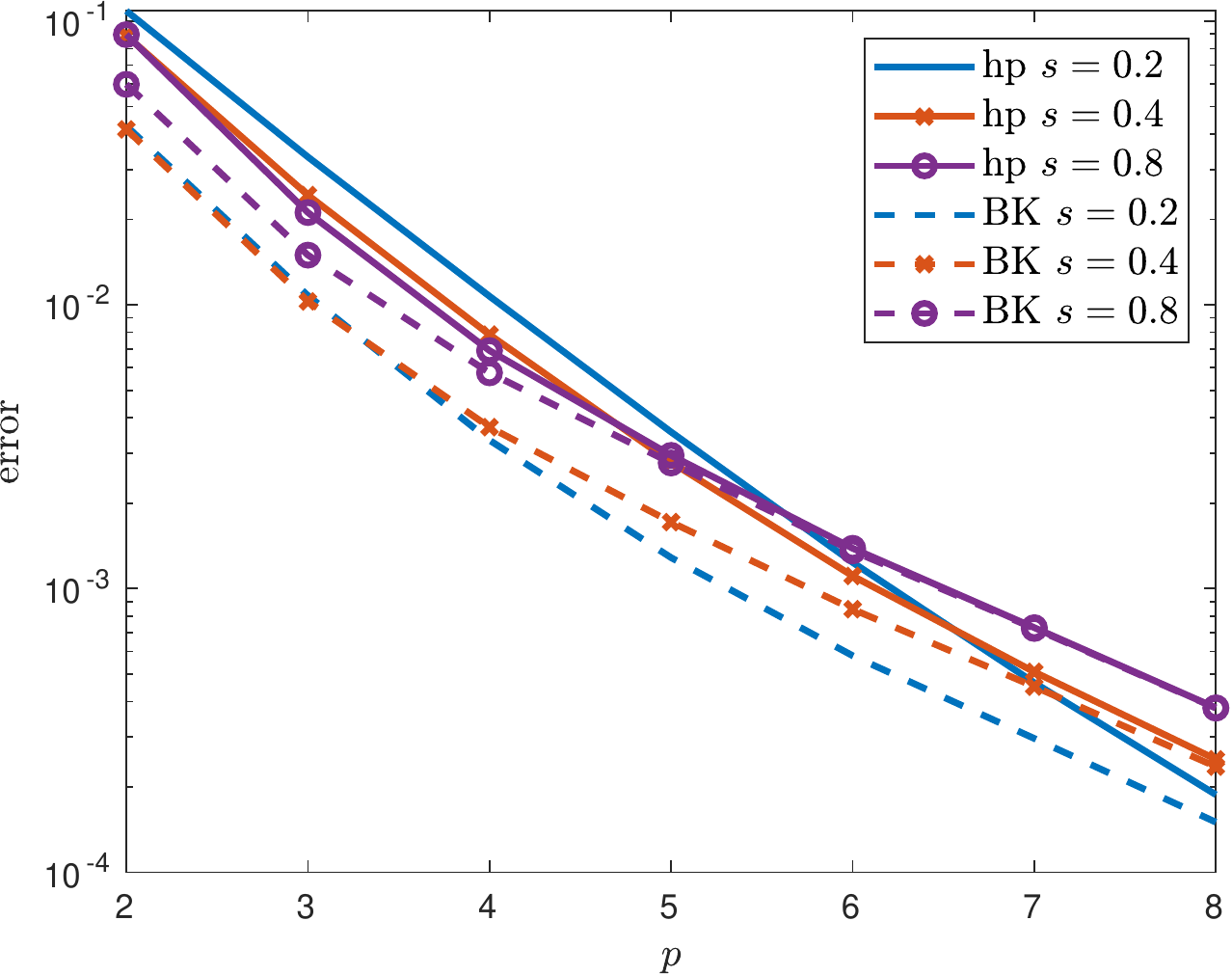}
  \includegraphics[width=0.45\textwidth]{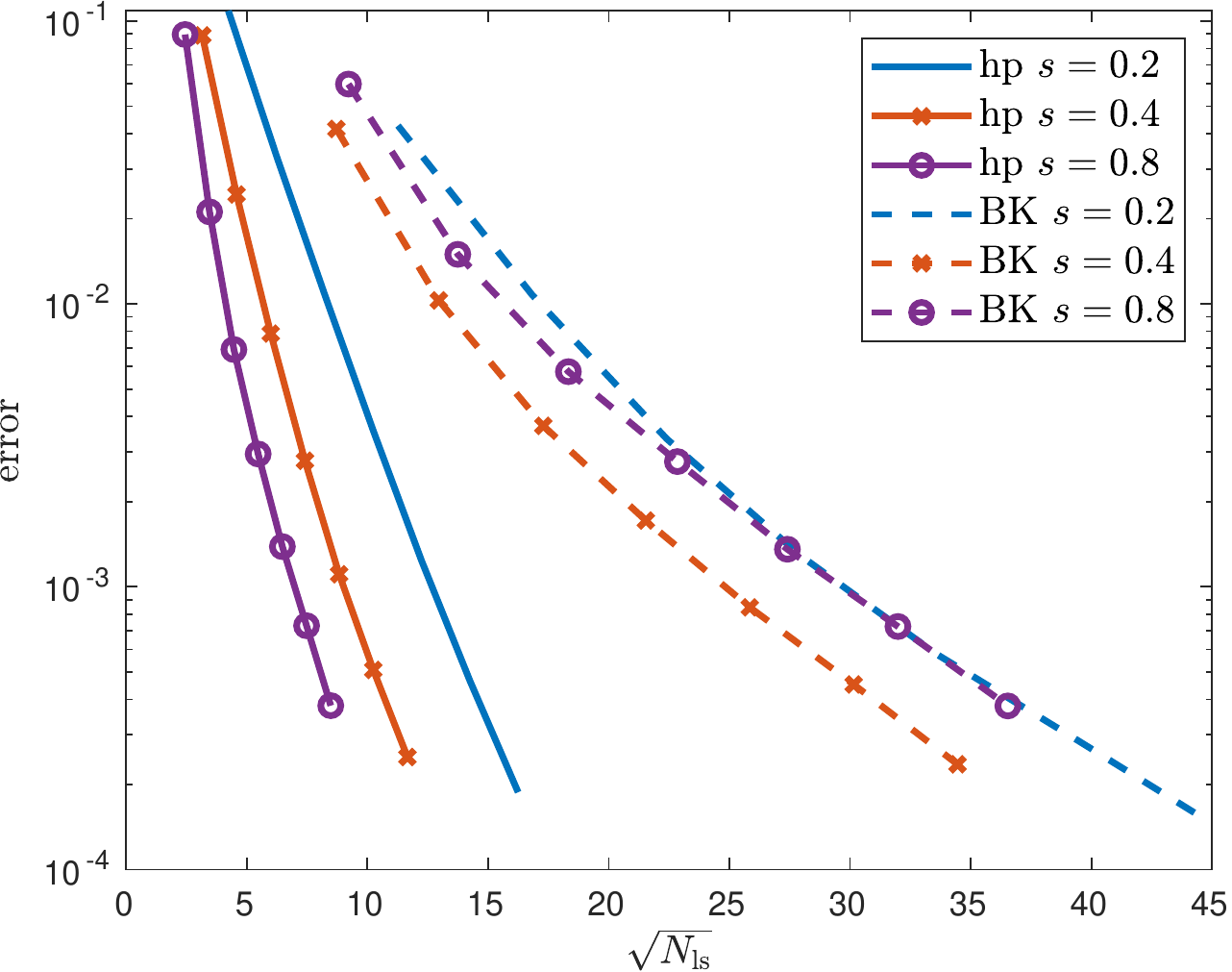}
  \caption{Error convergence  of the \EhpFEM and the \BK
           for L-shaped domain $\Omega_2$, depicted versus the polynomial degree 
           $p$ and $N_{\text{ls}}^{1/2}$, the square root of the number of linear systems to be solved. }
  \label{fig:lshape_conv}
\end{figure}

\begin{figure}[th]
  \centering
  \includegraphics[width=0.45\textwidth]{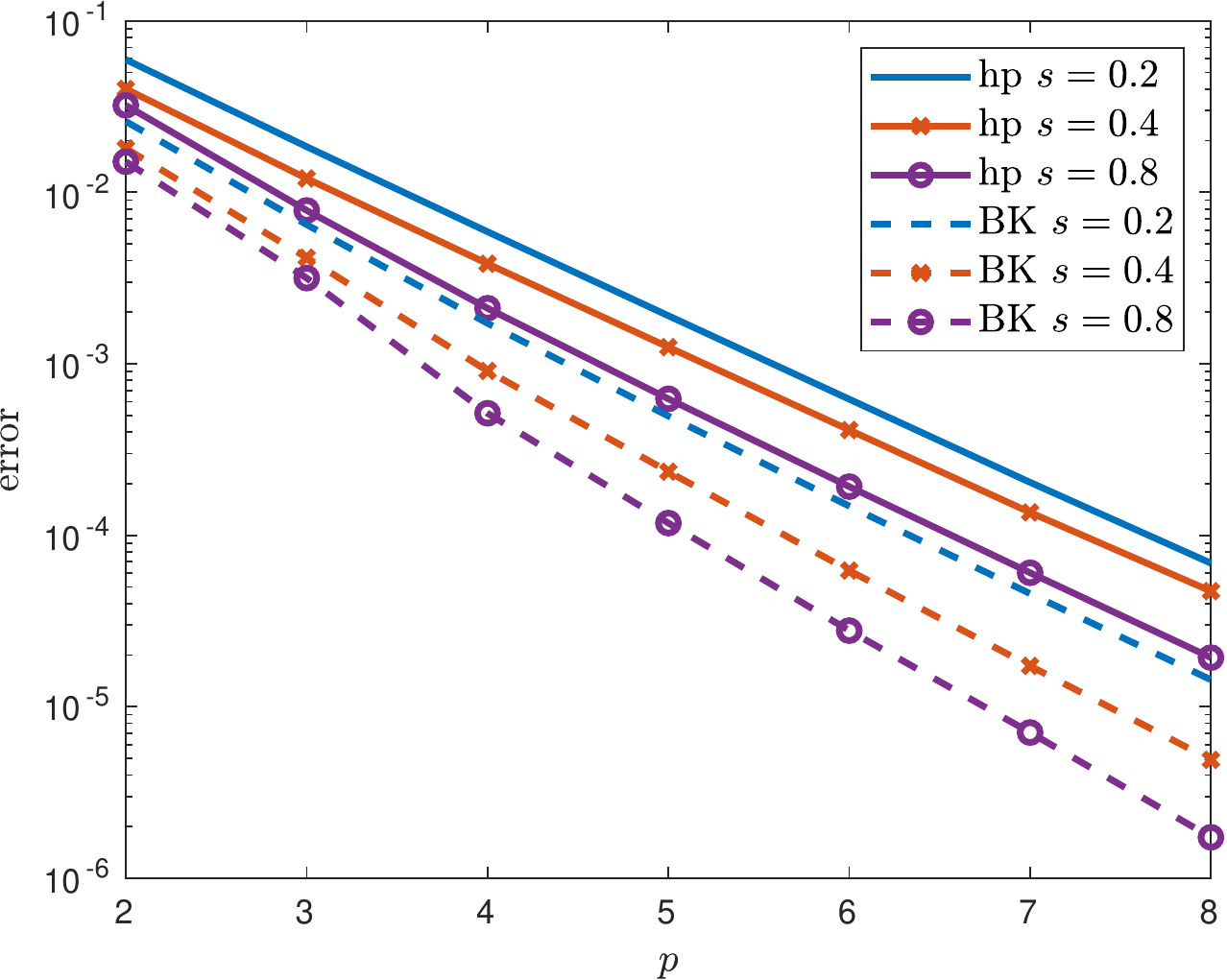}
  \includegraphics[width=0.45\textwidth]{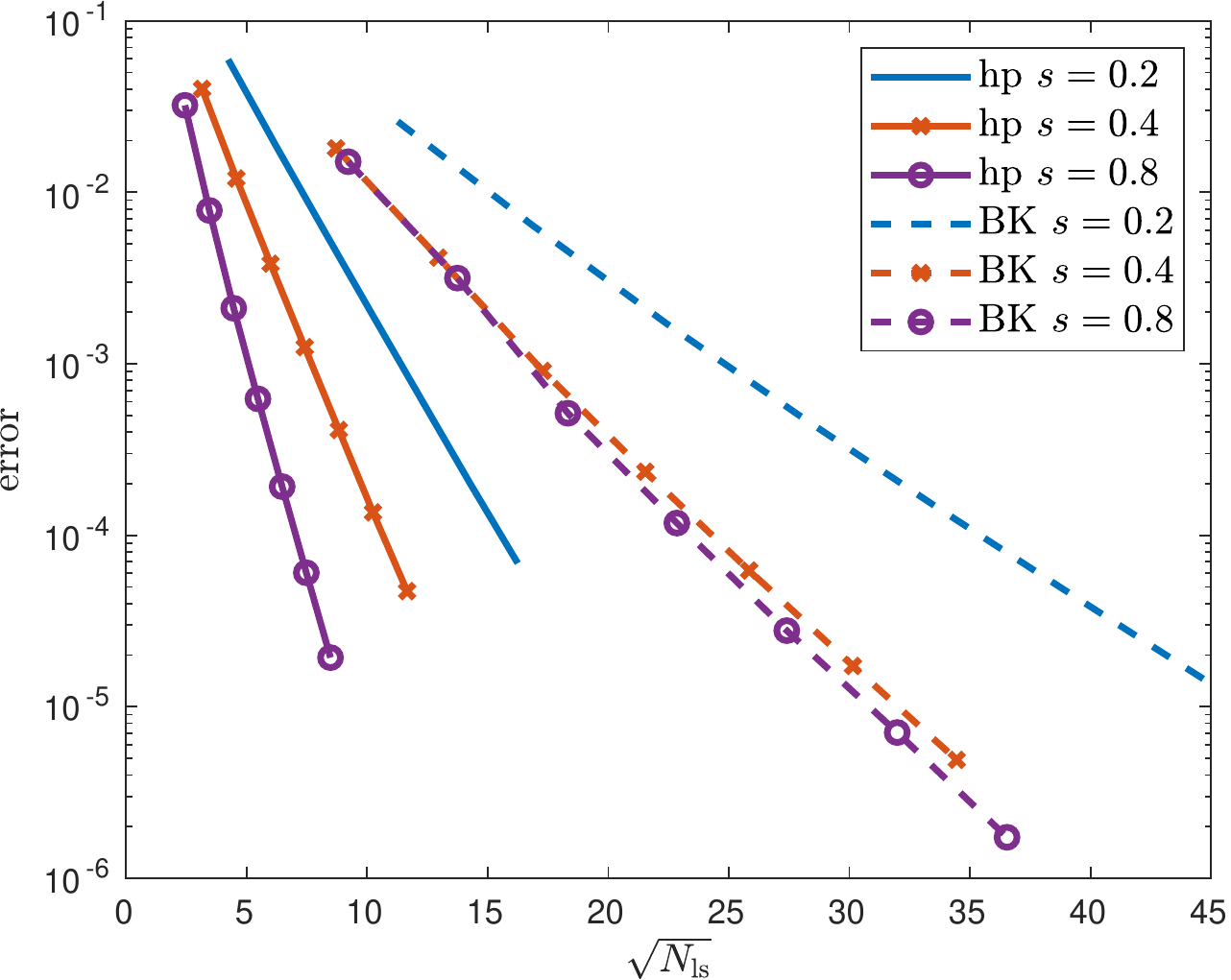}
  \caption{Error convergence of the \EhpFEM and the \BK for the 
           slit domain $\Omega_3$, depicted versus the polynomial degree $p$ and 
           $N_{\text{ls}}^{1/2}$, the square root of the number of linear systems to be solved. }
  \label{fig:slit_conv}
\end{figure}

\section{ Extensions and Conclusions}
\label{S:Concl}

\subsection{Fractional Diffusion on Manifolds}
\label{S:FrcLapManif}

We describe next 
\emph{fractional surface diffusion operators on analytic manifolds}, that are of interest in some application.
It exploits the admissibility of nonconstant, analytic coefficient $A(x')$ in the diffusion operator $\calL$.
The numerical schemes and  their analysis as described above can be extended to this setting as well.

Let $\Gamma \subset \R^3$ denote a compact, orientable analytic manifold 
(e.g. \cite{Aubin1998Riemannian}). 
We think of bounded, analytic surfaces such as the unit sphere $\bbS^2 \subset \R^3$.
Let $\Gamma$ be covered by a finite atlas of analytic charts $\{\chi_j\}_{j=1}^J$.
In a generic analytic chart $\chi$ of $\Gamma$, consider the polygonal domain
$\widetilde{\Gamma} = \chi(\Omega)\subset \Gamma$ 
where the parameter domain $\Omega \subset \R^2$ 
of the chart $\chi$ is a curvilinear polygon in the sense of Section \ref{sec:GeoPrel}.
On $\Gamma$, introduce the surface (Lebesgue)measure $\mu$.
On $\widetilde{\Gamma}$, for given $\tilde{f} \in L^2(\Gamma,\mu;\R)$, 
consider the
\emph{Dirichlet problem for the surface diffusion operator $\widetilde{\calL}$}:
find $u_\Gamma$ such that
\begin{equation}\label{eq:SurfDiff}
\widetilde{\calL} u_\Gamma
:= 
-\DIV_\Gamma (\tilde{A} \nabla_\Gamma u_\Gamma) = \tilde{f} 
\quad \mbox{on}\quad \widetilde{\Gamma}\;,
\qquad 
{u_\Gamma}|_{\partial \widetilde{\Gamma} } = 0 
\;.
\end{equation}
Here, the ``diffusion coefficient'' $\tilde{A}$ in 
\eqref{eq:SurfDiff} is a symmetric, uniformly in $\Gamma$ positive definite 
linear map acting on the tangent bundle of $\Gamma$, and $\nabla_\Gamma$ and 
$\DIV_\Gamma$ denote the surface gradient and divergence differential operators on $\Gamma$,
respectively (see \cite{Aubin1998Riemannian}).
With Sobolev spaces on $\Gamma$ invariantly defined in the usual fashion
(e.g., \cite{Aubin1998Riemannian}), 
the surface diffusion operator $\widetilde{\calL}$ in \eqref{eq:SurfDiff}
extends to a boundedly invertible, self-adjoint operator 
$\widetilde{\calL}: H^1_0(\widetilde{\Gamma};\mu) \to H^{-1}(\widetilde{\Gamma};\mu) =  ( H^1_0(\widetilde{\Gamma};\mu) )^*$ 
(duality with respect to $L^2(\widetilde{\Gamma};\mu) \simeq (L^2(\widetilde{\Gamma};\mu))^*$)
whose inverse $\widetilde{\calL}^{-1}$ is a compact, self-adjoint operator on $L^2(\widetilde{\Gamma};\mu)$.
The spectral theorem implies that $\widetilde{\calL}^{-1}$ admits a countable sequence of eigenpairs 
$(\tilde{\lambda}_k,\tilde{\varphi}_k)_{k\geq 1}$ whose eigenvectors $\tilde{\varphi}_k$ 
can be normalized so that they constitute an ONB of $L^2(\widetilde{\Gamma};\mu)$.
With the ONB $\{ \tilde{\varphi}_k \}_{k\geq 1}$, 
fractional Sobolev spaces on $\widetilde{\Gamma}$ can 
be defined as in \eqref{def:Hs}, i.e. for $0<s<1$,
\begin{equation}\label{def:MHs}
{\mathbb H}^s(\tilde{\Gamma}) 
:= 
\left\{ w = \sum_{k=1}^\infty w_k \tilde{\varphi}_k: 
        \| w \|_{{\mathbb H}^s(\tilde{\Gamma})}^2 = \sum_{k=1}^{\infty} \tilde{\lambda}_k^s w_k^2 < \infty \right\} \;.
\end{equation}
The space ${\mathbb H}^s(\tilde{\Gamma})$ can be characterized by (real) interpolation:
There holds ${\mathbb H}^s(\tilde{\Gamma}) = (H^1_0(\widetilde{\Gamma};\mu), L^2(\widetilde{\Gamma};\mu))_{s,2}$ 
for $0<s<1$.
As in \eqref{fl=f_bdddom}, with the family $(\tilde{\lambda}_k,\tilde{\varphi}_k)_{k\geq 1}$
we may define the spectral fractional Laplacian
$\widetilde{\calL}^s = (\calL,I)_{s,2}$ 
by interpolation of linear operators (e.g. \cite{SGKreinIntrpOp71}). 
The arguments in \cite{CS:07} extend verbatim the localization \eqref{alpha_harm_intro} to 
the present setting. In particular, the spectral fractional diffusion operator on
$\widetilde{\Gamma}$ with homogeneous Dirichlet boundary conditions on $\partial \widetilde{\calL}$
admits a localization on the cylinder $\widetilde{\calC} = \widetilde{\Gamma} \times (0,\infty)$.

Pulling back the problem \eqref{eq:SurfDiff} via $\chi$ into the (Euclidean)
chart domain $\Omega\subset \R^2$, 
the Dirichlet problem for the fractional power $s\in (0,1)$ of the surface
diffusion \eqref{eq:SurfDiff} in $\widetilde{\Gamma} = \chi(\Omega)$ 
reduces to \eqref{alpha_harm_intro} where the bilinear form 
\eqref{eq:blfOmega} and diffusion coefficient $A$ are given by 
$$
A(x') = G(x')^{\top}(\tilde{A}\circ\chi)(x') G(x') \;,\quad x'\in \Omega,
$$
with $G(x') = D\chi(x'):\Omega \to \R^{3\times2}$ denoting 
the (assumed analytic in $\overline{\Omega}$) 
metric of $\M$ in chart $\chi$.
The real-analyticity of compositions, sums and product of
real-analytic functions implies that $A(x')$ satisfies
\eqref{eq:AnRegAc} in $\Omega$, so that the ensuing
mathematical results also apply to \eqref{fl=f_bdddom}
with \eqref{eq:SurfDiff}.
\subsection{$N$-widths of solution sets}
\label{sec:Nwidth}
The $hp$-approximation rate bounds for either the \EhpFEM 
(Theorems~\ref{thm:ExpCnvI}, \ref{thm:ExpCnvII}) 
and the \BK (Theorems~\ref{thm:BKminMesh}, \ref{thm:BKTPMesh}) 
imply 
exponential bounds on $N$-widths of solution sets of \eqref{fl=f_bdddom} 
in a curvilinear polygon $\Omega$ as defined in Section~\ref{sec:GeoPrel},
with the data $A$ and $f$ satisfying the conditions in Section~\ref{sec:FracDiff}.
Such bounds are well-known to determine the rate of convergence of 
so-called \emph{reduced basis methods} (see \cite{RBM} and the references there).

We recall that, for a normed linear space $X$
(with norm $\| \circ \|_X$) and for a compact subset $\calK \subset X$,
the $N$-width of $\calK$ in $X$ 
is given by
\begin{equation}\label{eq:KlmNwid}
d_N(\calK,X) = \inf_{E_N} \sup_{f\in \calK} \inf_{g\in E_N} \| f - g \|_X.
\end{equation}
Here, the first infimum is taken over all subspaces $E_N$ of $X$
of dimension $N\in \bbN$.
Subspace sequences $\{ E_N \}_{N\geq 1}$ that attain the rates
of $d_N(\calK,X)$ in \eqref{eq:KlmNwid} as $N\to \infty$
can be realized numerically by (generally non-polynomial)
so-called \emph{reduced bases} (see, e.g., \cite{RBM} and the references there).
Here, we fix a set $G \subset {\mathbb C}^2$ containing 
$\overline{\Omega}$ and 
choose $\calA \subset L^2(\Omega)$ as the
set of functions $f:\Omega\to \bbR$
that admit a holomorphic extension to 
$G$ with $\| f \|_{L^\infty(\Omega)} \leq 1$.
Then $\calK := \calL^{-s}\calA \subset \Hs$ 
is a compact subset by the continuity of $ \calL^{-s}: {\mathbb H}^{-s}(\Omega) \to \Hs$
and the compact embedding $L^2(\Omega) \subset {\mathbb H}^{-s}(\Omega)$.
We choose $X=\Hs$  in \eqref{eq:KlmNwid}. 

Then, from Theorem~\ref{thm:BKTPMesh} and the fact that 
$Q_k^{-s}(\calL_{hp}) f \in S^q_0(\Omega, \Tg)$, 
with the choices of parameters in Theorem~\ref{thm:BKTPMesh},
$N = {\rm dim}(S^q_0(\Omega, \Tg) = O(q^4)$,
for $\calK$ as above and $E_N = S^q_0(\Omega, \Tg)$ 
follows the (constructive) bound 
\begin{equation}\label{eq:HsNwidth1}
d_N(\calK,X) \lesssim \exp(-b\sqrt[4]{N})
\end{equation}
for some constant $b>0$ independent of $N$.

We also mention that the argument in \cite{JMMnwidth} 
can be adapted to the setting of \eqref{fl=f_bdddom} in Section~\ref{sec:GeoPrel},
resulting in the (sharp) nonconstructive bound
\begin{equation}\label{eq:HsNwidth2}
d_N(\calK,X) \lesssim \exp(-b\sqrt{N})\;.
\end{equation}
We refer to 
\cite{HarbChenNar19,danczul2020reduced,bonito2020reduced}
for numerical approximation of \eqref{fl=f_bdddom} using reduced basis methods. 
\subsection{Conclusions}
\label{sec:Concl}
For the Dirichlet problem of the spectral, 
fractional diffusion operator $\calL^s$ with $0<s<1$
in a bounded, polygonal domain $\Omega \subset \mathbb{R}^2$, 
we proposed two $hp$-FE discretizations. 
The first discretization, already considered in \cite{MPSV17,BMNOSS17_732},
is based on the CS-extension upon $hp$-FE 
\emph{semi-discretization in the extended variable}. 
Subsequent diagonalization leads to a decoupled system \eqref{eq:decoupled-problems} 
of $\M$ linear and local, 
singularly perturbed second order reaction-diffusion problems in $\Omega$.
Invoking analytic regularity results for these
problems from \cite{MelCS_RegSingPert,melenk02}, 
and 
\emph{robust exponential convergence} of $hp$-FEM for reaction-diffusion
problems in polygons from 
\cite{melenk-schwab98,melenk02,banjai-melenk-schwab19-RD}, 
an exponential convergence rate bound
$C\exp(-b\sqrt[6]{N_{DOF}})$ with respect to the total number of degrees of freedom, $N_{DOF}$, 
which are used in the tensor-product $hp$-FE discretization, 
is established in the fractional Sobolev norm $\mathbb{H}^s(\Omega)$.
We add that the variational semi-discretization in 
Section~\ref{S:diagonalization-abstract-setting}
    with respect to the extruded variable $y$ offers the possibility for 
    {\sl residual a posteriori} error estimation. 

The second discretization is based on the spectral integral representation
of $\calL^{-s}$ due to Balakrishnan \cite{Balakr1960}. 
A sinc quadrature discretization \cite{Stenger83} 
approximates the spectral integral by an (exponentially convergent
\cite{Stenger83,BP:13}) finite linear combination of solutions of decoupled 
elliptic reaction diffusion problems in $\Omega$ with analytic input data. 
Drawing once more on analytic regularity
and robust exponential convergence of $hp$-FEM  
\cite{MelCS_RegSingPert,melenk-schwab98,melenk02,banjai-melenk-schwab19-RD},
we prove exponential convergence also for this approach.
A computable {\sl a posteriori} bound for the semidiscretization error 
    incurred for the \BK approach does not seem to be available currently.

The theoretical convergence rate bounds are verified in a series of numerical
experiments. These show, in particular, that exponential convergence 
is realized in the practical range of discretization parameters.
They also indicate a number of practical issues, such as conditioning or 
algorithmic steering parameter selection, which are beyond the scope of 
the mathematical convergence analysis.
We point out that the proposed algorithms and the exponential convergence
results extend in several directions: 
besides homogeneous Dirichlet boundary conditions, 
also mixed, Dirichlet-Neumann boundary conditions, and operators with a nonzero first order term
could be considered. In either case, the proposed algorithms extend readily.
The main result is the construction of $hp$-FE discretizations with
robust exponential convergence rates
for spectral fractional diffusion in polygonal domains $\Omega\subset \bbR^2$. 
Similar results hold in bounded intervals $\Omega\subset \bbR^1$ 
(we refer to \cite{BMNOSS17_732} for details).
In polyhedral $\Omega\subset \bbR^3$, the present line of analysis is also applicable;
however, exponential convergence and analytic regularity of $hp$-FEM
for reaction-diffusion problems in space dimension $d=3$ does not appear to be 
available to date.
We considered fractional powers only for self-adjoint, second-order elliptic
divergence-form differential operators $\calL w = - \DIV( A \GRAD w )$ in $\Omega$.
The arguments for the \BK extend to non-selfadjoint operators
which include first-order terms via \cite{BoPascFracRegAcc2017},
provided suitable $hp$-FEM for advection-reaction-diffusion problems in $\Omega$
are available (e.g. \cite{MS99_325}).

The present analysis is indicative for achieving high, algebraic rate $p+1-s$
of convergence in $\mathbb{H}^{s}(\Omega)$ by $h$-version FEM 
of fixed order $p\geq 1$ in $\Omega$. As in $hp$-FEM, this will require
anisotropic mesh refinement aligned with $\partial\Omega$, 
ie., so-called ``boundary-layer'' meshes. 
Several constructions are available 
(see, e.g., \cite{SSX98_321} for so-called
``exponential boundary layer meshes'' and \cite{RoosStynsTobiska2ndEd} for
  so-called ``Shiskin meshes'').
We refrain from developing details for this approach which can be analyzed 
along the lines of the present paper.

\def\cprime{$'$}

\end{document}